\documentclass
[11pt]{amsart}
\usepackage{amsmath,amsthm, amscd, amssymb, amsfonts}
\usepackage[all]{xy}
\usepackage{hhline}
\usepackage[dvips, dvipsnames, usenames]{color}

\usepackage[active]{srcltx}


\oddsidemargin -.2in \evensidemargin -.2in \topmargin -.3in
\headheight .2in \headsep .2in \textwidth 16cm \textheight 24cm
\footskip .2in \footnotesep .1in


\newcommand{\nc}{\newcommand}
\newcommand{\ot}{\otimes}

\newcommand{\co}{\operatorname{co}}
\newcommand{\diag}{\operatorname{diag}}

\newcommand{\ord}{\operatorname{ord}}

\nc{\ydk}{^{K}_{K}\mathcal{YD}}

\nc{\Tr}{\mathrm{Tr}}
\nc{\X}{\mathbf{X}}

\newcommand{\ku}{\mathbb C}

\newcommand{\N}{{\mathbb N}}

\newcommand{\M}{{\mathcal M}}

\newcommand{\D}{{\mathcal D}}
\newcommand{\E}{{\mathcal E}}
\newcommand{\II}{{\mathcal I}}

\nc{\eps}{\varepsilon}

\newcommand{\com}{\Delta}

\newcommand{\End}{\operatorname{End}}

\newcommand\ad{\operatorname{ad}}

\nc{\coM}{\M^\ast(2,\Bbbk)}
\nc{\coMtres}{\M^\ast(3,\Bbbk)}
\nc{\coMcua}{\M^\ast(4,\Bbbk)}
\nc{\coMcin}{\M^\ast(5,\Bbbk)}
\nc{\coMt}{\M^\ast(t,\Bbbk)}
\nc{\coMj}{\M^\ast(j,\Bbbk)}
\nc{\coMn}{\M^\ast(n,\Bbbk)}
\nc{\coMd}{\M^\ast(d,\Bbbk)}
\nc{\Ho}{H_0}
\nc{\GH}{G(H)}
\nc{\mas}{\oplus}
\nc{\cA}{\mathcal{A}}
\nc{\yd}{^{C_2}_{C_2}\mathcal{YD}}
\nc{\PH}{\cP(H)}
\nc{\e}{\varepsilon}
\nc{\GL}{\operatorname{GL}}
\nc{\wact}{\rightharpoonup}
\nc{\cark}{char\,k}
\nc{\adl}{\ad_\ell}
\nc{\cP}{\mathcal{P}}
\nc{\cU}{\mathcal{U}}
\nc{\fD}{\mathfrak{D}}
\nc{\cE}{\mathcal{E}}
\nc{\cS}{\mathcal{S}}
\nc{\be}{\textbf{e}}

\newcommand\id{\operatorname{id}}

\def\pf{\begin{proof}}
\def\epf{\end{proof}}

\theoremstyle{remark}




\numberwithin{equation}{section}
\theoremstyle{plain}

\newtheorem{lema}{Lemma}[section]
\newtheorem{theorem}[lema]{Theorem}
\newtheorem{cor}[lema]{Corollary}

\newtheorem{prop}[lema]{Proposition}
\newtheorem{proposition}[lema]{Proposition}
\newtheorem{example}[lema]{Example}

\theoremstyle{definition}
\newtheorem{definition}[lema]{Definition}

\theoremstyle{remark}
\newtheorem{obs}[lema]{Remark}
\newtheorem{remark}[lema]{Remark}

\theoremstyle{plain}
\newcounter{maint}

\theoremstyle{plain}


\begin{document}

\renewcommand{\baselinestretch}{1.2}

\thispagestyle{empty}

\title[Techniques for classifying Hopf algebras]
{Techniques for classifying Hopf algebras and applications to dimension $p^3$}
\author[M. Beattie and  G. A. Garc\'ia]
{Margaret Beattie \and Gast\'on Andr\'es Garc\'\i a}
\thanks{M. Beattie's research was partially supported by an NSERC Discovery Grant.
G. A. Garc\'ia's research was partially supported by
 ANPCyT-Foncyt, CONICET, Ministerio de Ciencia y Tecnolog\'\i a (C\'ordoba)
 and Secyt (UNC)}
\address{\newline \noindent Department of Mathematics and Computer Science
\newline \noindent  Mount Allison University
\newline  \noindent Sackville, NB E4L 1E6
\newline \noindent Canada
\vspace*{0.5cm}
\newline \noindent Facultad de Matem\'atica, Astronom\'\i a y F\'\i sica \&
\newline \noindent Facultad de Ciencias Exactas, F\'\i sicas y Naturales,
\newline \noindent Universidad Nacional de C\'ordoba. CIEM -- CONICET.
\newline \noindent Medina Allende s/n
\newline \noindent (5000) Ciudad Universitaria, C\'ordoba, Argentina}
\email{ mbeattie@mta.ca} \email{ggarcia@famaf.unc.edu.ar}

\subjclass[2010]{16T05}
\date{\today}

\begin{abstract}
The classification of all Hopf algebras of a given finite dimension
over an algebraically closed field of characteristic $0$ is a
difficult problem. If the dimension is a prime, then the Hopf
algebra is a group algebra. If the dimension is the square of a
prime then the Hopf algebra is a group algebra or a Taft Hopf
algebra.  The classification is also complete for dimension $2p$ or
$2p^2$,  $p$ a prime. Partial results for some other cases are
available. For example, for dimension $p^3$ the classification of the semisimple
Hopf algebras was done by Masuoka, and the pointed Hopf algebras were classified  by
Andruskiewitsch and Schneider, Caenepeel and D\u{a}sc\u{a}lescu, and
 \c{S}tefan and van Oystaeyen  independently. Many classification
results for the nonsemisimple, nonpointed, non-copointed
 case have been proved by the second author but the classification
in general for dimension $p^3$ is still
 incomplete, up to now even for dimension $27$.
\par In this paper we outline some results and techniques which have
been useful in approaching this   problem and add a few new ones.
 We give some further
results on Hopf algebras of dimension $p^{3}$
and finish the
classification for dimension $27$.
\end{abstract}

\maketitle

\section{Introduction}
The question of classification of all Hopf algebras of a given dimension
up to isomorphism
dates back over $35$ years  to Kaplansky's first monograph on
bialgebras \cite{Kap}. Some progress has been
made on this problem but, in general, it is a difficult question
where there are no standard methods of
attack.

\par  Hopf algebras of some dimensions over an algebraically
closed field $\Bbbk$ of characteristic zero
are completely classified.
By the Kac-Zhu Theorem \cite{Z}, 1994, the only Hopf algebras
of dimension $p$ prime are the group algebras $\Bbbk C_p$.
Hopf algebras of dimension $p^2$ are either pointed, and then
they are Taft Hopf algebras, or   they are semisimple and
are group algebras \cite{Ng}, \cite{masuoka-p^n}, \cite{AS p squared}.
\par
For $p=2$, the Hopf algebras of dimension $p^3 =8$ were classified by
Williams \cite{W}; different proofs
appear in \cite{ma-6-8} and \cite{stefan}. There are $14$ isomorphism
classes of Hopf algebras
of dimension $8$.
First there are three group algebras for the abelian groups of order $8$,
then there are the two group algebras for the
nonabelian groups of order $8$ and their duals which are nonpointed.
Finally there is a self-dual semisimple Hopf algebra of dimension $8$
which is neither commutative nor cocommutative.
This Hopf algebra can be constructed
 as an extension of $\Bbbk [C_2 \times C_2]$ by $\Bbbk C_2$.
Thus every Hopf algebra of dimension $8$ is either semisimple
 or pointed nonsemisimple or the dual is pointed nonsemisimple.

\par  For dimension $p^3$, $p$ odd,
 the classification question is still  open although the semisimple
and the pointed Hopf algebras of dimension
  $p^3$ have been classified.  For $H$ semisimple, the classification
is due to Masuoka \cite{ma-pp}.
There are $p+8$ isomorphism classes of semisimple Hopf algebras
of dimension $p^3$, namely the three group algebras
of abelian groups, the two group algebras of the nonabelian groups
and their duals and $p+1$ self-dual
semisimple Hopf algebras
which are neither commutative nor cocommutative.
These Hopf algebras are extensions of $\Bbbk [C_p \times C_p]$
by $\Bbbk C_p$. For $H$ pointed
nonsemisimple of dimension $p^3$, the classification is due
to Andruskiewitsch and Schneider \cite{AS2},
Caenepeel and D\u{a}sc\u{a}lescu \cite{CD}, and
 \c{S}tefan and van Oystaeyen \cite{Stv} independently.
There are $ (p-1)(p+9)/2$ isomorphism classes
and two of these have nonpointed duals.

 \par In \cite{GG}, the second author has proved some
classification results for Hopf algebras of dimension $p^3$ and
 conjectures that Hopf algebras of this dimension are either semisimple,
pointed nonsemisimple or the dual is pointed nonsemisimple.
It is shown in \cite{GG}
 that every ribbon Hopf algebra of dimension $p^3$ is either
a group algebra or a Frobenius-Lusztig kernel. As well,
 if $H$ is nonsemisimple, nonpointed   and the dual is not
pointed, then $H$
has no proper normal sub-Hopf algebra and is either
 of type $(p,1)$, meaning that the grouplikes of $H$ have order
$p$ and those of $H^\ast$ have order $1$,
 or else is of type $(p,p)$.  However, even for dimension $27$,
only the quasi-triangular Hopf algebras have been classified.

 \par In this note, we first develop some more tools for attacking
these classification problems.  Some of these are
 based on work of D. Fukuda \cite{fukuda-18}, \cite{fukuda-pq}
and use the coradical filtration.  Others extend
 a key theorem of \c{S}tefan \cite{stefan} on automorphisms
of matrix coalgebras.  We apply these results in the
 last section of this paper, but find them interesting and useful in their own right.

In the last section, we present some general
results about Hopf algebras of dimension $p^3$ and
 then in the final subsection we complete the classification
for dimension $27$ and show that all Hopf algebras of dimension
 $27$ are either semisimple, pointed nonsemisimple or the
dual is pointed nonsemisimple.  This result adds to the list of small dimensions
for which the classification is known.
The smallest open dimension   is
  $24$. Then the next is     $32$ since   the
classification for $27$ is done in this note, $29,31$ are prime, $25$ is a prime squared, $26$ is twice a prime,
$28$ was completed in \cite{ChNg}, and $30$ was completed in \cite{fukuda-30}.

\section{Preliminaries}
\subsection{Conventions}\label{subsec:conv}
Throughout, we will work over $\Bbbk$, an algebraically closed
field of characteristic zero.
 Since we are interested in classification,
our Hopf algebras will be understood to be finite dimensional unless otherwise
stated. Good references for Hopf algebra theory are \cite{Mo}, \cite{S}. 
Throughout $C_n$ will denote the cyclic group of order $n$ and
  $p$ will denote
an odd prime.

\smallbreak It is useful to recall that for $H$ a finite dimensional
Hopf algebra, if $B$ is a sub-bialgebra of $H$ then
$B$ is a sub-Hopf algebra.
To see this, let $\Phi:{\rm  End}(B) \rightarrow {\rm End}(B)$
be given by $\Phi(f) = f \ast \id_{B}$. Since $f(b_1)b_2 = g(b_1)b_2$
implies $f(b)=g(b)$ by applying
$\varepsilon_H$, we have that $\Phi$ is one to one,
 and since $B$ is finite dimensional
then $\Phi$ is onto.  Thus there exists $s \in {\rm End}(B) $
such that $\Phi(s) = u \varepsilon$,
\textit{i.e.} $s$ is a left inverse to the identity.
A similar argument yields a right inverse to
the identity  and these must be equal.
Then $s$ and $S_H$ are inverses to the identity in
${\rm Hom}(B,H)$ and so must be equal.

If $H$ is any Hopf algebra over $\Bbbk$ then $\com$, $\e$, $S$
denote respectively the comultiplication, the counit and the
antipode.  Comultiplication and coactions are written using
the Sweedler-Heynemann notation with summation sign suppressed.
The coradical $H_0$ of $H$ is the coradical of $H$ as a coalgebra, \textit{i.e.}
the sum
of all simple subcoalgebras of $H$.
$H$ is called cosemisimple if $H = H_0$.
With our assumptions on $\Bbbk$, by \cite{LR1, LR2}
a Hopf algebra $H$ is semisimple if and only
 if it is cosemisimple if and only if $S^2$ is the identity.
  $H$ is called pointed if $H$ has only simple subcoalgebras
of dimension $1$ and $H$ is called copointed if its dual is pointed.
We adopt the convention
that a pointed Hopf algebra means non-cosemisimple pointed,
that is, not a group algebra.

For $D$ a coalgebra, $G(D)$ denotes the grouplike elements of $D$ and
$(D_n)_{n \in \N}$ denotes the coradical filtration of $D$.
Let $L$ be a coalgebra with a distinguished grouplike 1. If $M$
is a right $L$-comodule via $\delta$, then the space of {\it right
coinvariants} is $$ M^{\co \delta} = \{x\in
M\mid\delta(x)=x\ot1\}.
$$ In particular, if $\pi:H\rightarrow L$ is a morphism of Hopf
algebras, then $H$ is a right $L$-comodule via $(\id\ot\pi)\com$ and
in this case $H^{\co \pi}:=H^{\co\ (\id\ot\pi)\com}$. Left coinvariants, written
${^{\co \pi} H }$ are defined analogously.

 $L_h$
(resp. $R_h$) is the left (resp. right) multiplication in $H$ by
$h$. The left and right adjoint action $\adl, \ad_r:H\rightarrow
\End(H)$, of $H$ on itself are given, in Sweedler notation, by:
$$
\adl(h)(x) =  h_1xS(h_2), \qquad \ad_r(h)(x) =
S(h_1)xh_2,
$$
for all $h,x\in H$. The set of $(h,g)${\it -primitives }(with
$h,g\in\GH$) and {\it skew-primitives} are:
$$
\begin{array}{rcl}
\cP_{h,g}(H)&:=&\{x\in H\mid\com(x)= g\ot x + x\ot h\},\\
\noalign{\smallskip} \cP(H)&:=&\sum_{h,g\in\GH}\cP_{h,g}(H).
\end{array}
$$
We say that $x\in \Bbbk(h-g)$ is a {\it trivial} skew-primitive;
otherwise, it is {\it nontrivial}.

\smallbreak Let $N$ be a natural number and let
$q$ be a primitive $ N $-th  root of unit. We denote by $ T_{q} $
the Taft Hopf algebra which is generated as an algebra by  elements $
g $ and $ x $ satisfying the relations $ x^{N} = 0 = 1-g^{N}$, $
gx=q xg $. It is a Hopf algebra of dimension $ N^{2} $ where the
comultiplication is determined by $ \com(g) = g\ot g $ and $ \com(x)
= x\ot 1 + g\ot x $.

\subsubsection{Extensions of Hopf algebras}\label{subsec:extensions}
We recall the definition of an exact sequence of Hopf algebras.

\begin{definition}\cite{andrudevoto}.
Let $A\overset{\imath}\hookrightarrow
H\overset{\pi}\twoheadrightarrow B$ be a sequence of Hopf algebra
morphisms. We  say that the sequence is {\it exact} if the following conditions hold:
\begin{enumerate}
\item[$ (i) $] $\imath$ is injective (and then we identify $A$ with its image);
\item[$ (ii) $] $\pi$ is surjective;
\item[$ (iii) $] $\pi\imath=\e$;
\item[$ (iv) $] $\ker\pi=A^+H$ ($A^+$ is the kernel of the counit);
\item[$ (v) $] $A=\,H^{\co \pi}$.
\end{enumerate}
\end{definition}

In such a case we will also say that $H$ is an {\it
extension of $A$ by $B$}. An exact sequence is called {\it central}
if $A$ is contained in the center of $H$.

The next theorem  is due to S. Natale but it is derived from
a result in \cite{stefan}. (See Theorem \ref{thm:stefan} and
Proposition \ref{prop:gen-stefan-vay}.)
Natale's theorem has been a key component
in classification results.

\begin{prop}\label{prop:natale-stefan}\cite[Prop. 1.3]{natale}. Let $H$ be a
finite dimensional nonsemisimple Hopf algebra. Suppose that $H$
is generated by a simple subcoalgebra of dimension $4$ which is
stable by the antipode. Then $H$ fits into a central exact
sequence $\Bbbk^G\overset{\imath}\hookrightarrow
H\overset{\pi}\twoheadrightarrow A,$ where $G$ is a finite group
and $A^*$ is a pointed nonsemisimple Hopf algebra.\qed
\end{prop}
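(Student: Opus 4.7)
\emph{Proof plan.} The strategy is to apply Stefan's theorem (Theorem~\ref{thm:stefan}) on automorphisms of matrix subcoalgebras to the $S$-stable subcoalgebra $C$, in order to extract a distinguished grouplike whose associated group algebra is cocentral in $H^{*}$, and then to dualise to obtain the central inclusion $\Bbbk^{G}\hookrightarrow H$.

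First, $C$ is isomorphic to $\coM$ (being a simple coalgebra of dimension $4$), and the $S$-stability of $C$ makes $S|_{C}$ a coalgebra anti-automorphism, so $S^{2}|_{C}$ is a coalgebra automorphism. Since $H$ is nonsemisimple, Larson--Radford gives $S^{2}\neq\id_{H}$; because $C$ generates $H$ as an algebra this forces $S^{2}|_{C}\neq\id_{C}$, so that $S^{2}|_{C}$ is a nontrivial coalgebra automorphism of the matrix coalgebra $C$. Stefan's theorem then produces a distinguished nontrivial grouplike element $\gi$ whose inner action realises $S^{2}|_{C}$; by the generating hypothesis on $C$, together with the fact that both $S^{2}$ and $\ad(\gi)$ are algebra automorphisms agreeing on the generators, the equality $S^{2}=\ad(\gi)$ extends from $C$ to all of $H$. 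Writing $G=\langle\gi\rangle$ for the finite cyclic subgroup of $G(H)$ thus generated, one assembles a Hopf surjection $H^{*}\twoheadrightarrow \Bbbk G$ and verifies that it is cocentral --- equivalently, that the dual inclusion $\Bbbk^{G}\hookrightarrow H$ has image in the centre of $H$. This cocentrality is the principal obstacle and the deepest technical step, and would be handled by combining Stefan's rigidity description of $\gi$ with Radford's $S^{4}$-formula to constrain the non-central behaviour.

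Setting $A:=H/(\Bbbk^{G})^{+}H$ finally gives a central exact sequence $\Bbbk^{G}\hookrightarrow H\twoheadrightarrow A$. The Hopf algebra $A$ is nonsemisimple, since a central extension of two semisimple Hopf algebras is semisimple whereas $H$ is not. To see that $A^{*}$ is pointed, I would show that modulo the central $\Bbbk^{G}$ the four matrix coefficients of $C$ collapse to scalar multiples of a single grouplike, so that the image $\overline{C}$ of $C$ in $A$ is a one-dimensional coalgebra; because $\overline{C}$ generates $A$ as an algebra, every simple algebra quotient of $A$ has dimension one, equivalently $A^{*}$ has only one-dimensional simple subcoalgebras, i.e.\ $A^{*}$ is pointed.
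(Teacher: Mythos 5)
The paper does not actually prove this proposition: it is quoted from Natale \cite[Prop.~1.3]{natale}, with only the remark that it derives from \c{S}tefan's Theorem~\ref{thm:stefan}. So your plan has to be judged on its own correctness, and two of its key steps fail.

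The central claim --- that Theorem~\ref{thm:stefan} ``produces a distinguished nontrivial grouplike element $g_{\infty}$ whose inner action realises $S^{2}|_{C}$'', so that $S^{2}=\ad(g_{\infty})$ on $H$ and $G=\langle g_{\infty}\rangle\subseteq G(H)$ --- is not what the theorem says and is false. The theorem yields a multiplicative matrix and a root of unity $\omega$ with $S^{2}(e_{ij})=\omega^{i-j}e_{ij}$; it produces no grouplike of $H$. Concretely, $H=u_{q}(\mathfrak{sl}_{2})^{*}$ ($q$ of odd order $p$) is nonsemisimple and generated by the $S$-stable simple $4$-dimensional coefficient coalgebra of the standard module, yet $G(H)=\operatorname{Alg}(u_{q}(\mathfrak{sl}_{2}),\Bbbk)$ is trivial, so no such $g_{\infty}$ exists. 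The setup is also internally inconsistent: a cocentral surjection $H^{*}\twoheadrightarrow\Bbbk G$ dual to an inclusion $\Bbbk G\subseteq H$ forces $\Bbbk G$ to be central in $H$, which would give $S^{2}=\ad(g_{\infty})=\id$ and contradict nonsemisimplicity. In the true statement $G$ is not a subgroup of $G(H)$: $\Bbbk^{G}$ is a central Hopf subalgebra dual to a cocentral group-algebra quotient of $H^{*}$ (equivalently, $G$ grades $\Rep(H)$), and it may well be trivial, as in the example above. Your last step is likewise untenable: if the image $\overline{C}$ of $C$ in $A$ were one-dimensional, then $A$ would be generated by a single grouplike, hence a group algebra, and $A^{*}$ would be semisimple --- contradicting the very conclusion you want (and in the example $G=1$, $A=H$, and $\overline{C}=C$ remains simple of dimension $4$). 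The viable route, following Natale, stays with the normal form of Theorem~\ref{thm:stefan}$(i)$ for $S|_{C}$ and the antipode axioms (which give relations such as $e_{11}e_{22}+\omega e_{12}e_{21}=1$ and the $q$-commutation of the $e_{ij}$), and uses these to control quotients of $H$ (e.g.\ by the Hopf ideal generated by $e_{12},e_{21}$) and the structure of $H^{*}$; no grouplike of $H$ enters the argument.
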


The following statement condenses some known results and is useful
in finding exact sequences; for more details see \cite[Lemma 2.3]{GV}.

\begin{lema}\label{lem:exact-seq-dim}
Let $H$ be a finite dimensional Hopf algebra. If $\pi:H\rightarrow
B$ is an epimorphism of Hopf algebras then $\dim H=\dim H^{\co
\pi}\dim B$. Moreover, if $A=H^{\co \pi}$ is a sub-Hopf algebra
then the sequence $ A\overset{\imath}\hookrightarrow
H\overset{\pi}\twoheadrightarrow B $ is exact. \qed
\end{lema}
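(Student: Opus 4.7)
The plan is to establish the dimension formula first, using a general freeness theorem for Hopf algebra quotients, and then deduce exactness by verifying the five axioms one by one, with axiom (iv) being the only one requiring nontrivial work.

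For the dimension formula, I would first note that $H^{\co \pi}$ is always a subalgebra of $H$: the map $\rho=(\id\ot\pi)\com:H\to H\ot B$ is an algebra map since both $\com$ and $\pi$ are, and this makes $H$ a right $B$-comodule algebra; the coinvariants of any comodule algebra form a subalgebra. The key tool is then Schneider's theorem on faithful flatness of quotients of finite dimensional Hopf algebras: if $\pi:H\twoheadrightarrow B$ is a surjective Hopf algebra morphism between finite dimensional Hopf algebras, then $H$ is free as a left (and right) $H^{\co \pi}$-module, and $H/H(H^{\co \pi})^+\cong B$ as right $B$-comodule algebras (see e.g. \cite{schneider}). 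The rank of this free module is therefore $\dim B$, and comparing dimensions gives $\dim H=\dim H^{\co \pi}\dim B$.

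For the moreover statement, assume $A=H^{\co \pi}$ is a sub-Hopf algebra. Condition (i) is the inclusion and (ii) is the hypothesis that $\pi$ is surjective, while (v) holds by definition of $A$. For (iii), any $a\in A$ satisfies $a_1\ot\pi(a_2)=a\ot 1$; applying $\e\ot\id$ gives $\pi(a)=\e(a)$, so $\pi\imath=\e$. The only nontrivial condition is (iv). The inclusion $A^+H\subseteq\ker\pi$ is immediate from (iii) since $\pi(A^+)=0$. For the reverse inclusion I would use a dimension count: since $A$ is a sub-Hopf algebra of $H$, the Nichols--Zoeller freeness theorem guarantees that $H$ is a free left $A$-module; by the first part of the lemma its rank equals $\dim H/\dim A=\dim B$. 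Hence
\[
\dim A^+H=(\dim A-1)\cdot\dim B=\dim H-\dim B=\dim\ker\pi,
\]
and equality $A^+H=\ker\pi$ follows.

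The main (and essentially only) obstacle is invoking the correct form of Schneider's theorem giving the dimension formula without the a priori assumption that $H^{\co\pi}$ is a sub-Hopf algebra; everything else is either definitional or a clean dimension count built on top of Nichols--Zoeller. In particular, once the first assertion is in hand, the freeness of $H$ over the sub-Hopf algebra $A$ makes the verification of (iv) purely numerical, which is what makes the lemma a convenient packaging for the applications in the rest of the paper.
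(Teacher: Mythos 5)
Your proof is correct. The paper itself gives no argument for this lemma (it is stated with a \qed and a pointer to \cite{GV}), and your route --- Schneider's freeness/normal-basis theorem for the quotient $\pi:H\twoheadrightarrow B$ to get $\dim H=\dim H^{\co\pi}\dim B$, then the definitional checks for (i), (ii), (iii), (v) and a Nichols--Zoeller dimension count to upgrade $A^+H\subseteq\ker\pi$ to equality --- is exactly the standard argument that the cited reference condenses. The only cosmetic issue is that your citation key for Schneider is not in this paper's bibliography, so you should point instead to the precise statement (e.g.\ in \cite{Mo} or Schneider's normal basis theorem) if this were to be included.
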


\subsection{Matrix-like coalgebras}

Let $\coMn$ denote the simple coalgebra of
dimension $n^2$, dual to the matrix algebra ${\mathcal M}(n,\Bbbk)$.
We say that a coalgebra $C$ is a $d\times d$ \textit{matrix-like
coalgebra} if $C$ is spanned by elements $(e_{ij})_{1\leq i,j\leq n}$
called a \textit{matrix-like spanning set} (not necessarily linearly
independent) such that $\com(e_{ij}) =
\sum_{1\leq \ell \leq n} e_{i\ell} \ot e_{\ell j}$ and $\e(e_{ij}) = \delta_{ij}$.
If the matrix-like spanning set $(e_{ij})_{1\leq i,j\leq d}$ is linearly independent,
following \c Stefan we call $\be =  \{e_{ij}:\ 1\leq i,j\leq d\}$
a \textit{multiplicative matrix}
and then $ C\cong \coMd $ as coalgebras.

If $\pi: \M^\ast(n,\Bbbk) \rightarrow D$ is a coalgebra map,
then there are various possibilities for
the image.  From \cite[Thm. 2.1]{bitidasca}
we have the following theorem that describes them when $n=2$.

\begin{theorem}\label{thm:2x2-matrix}
Let $\pi: \M^\ast(2,\Bbbk) \rightarrow D$ be
a coalgebra map and denote by $C$ the image of $\M^\ast(2,\Bbbk)$
 in $D$.
If $\dim C = 3$, then   $C$ has a basis $\{g,h,u \}$ with
 $g,h$ grouplike and $u$ an $(h,g)$-primitive, and
  if $\dim C =1, 2$, then $C $ has a basis consisting of grouplike elements.  \qed
\end{theorem}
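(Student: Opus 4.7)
The plan is to use the multiplicative matrix structure of $\M^\ast(2,\Bbbk)$ and run a case analysis on $\dim C$. Set $g = \pi(e_{11})$, $h = \pi(e_{22})$, $u = \pi(e_{12})$, $v = \pi(e_{21})$; these four elements span $C$, have counits $1,1,0,0$ respectively, and applying $\pi\otimes\pi$ to the formulas $\com(e_{ij}) = \sum_k e_{ik}\otimes e_{kj}$ yields
\begin{align*}
\com(g) &= g\otimes g + u\otimes v, & \com(u) &= g\otimes u + u\otimes h, \\
\com(h) &= h\otimes h + v\otimes u, & \com(v) &= h\otimes v + v\otimes g.
\end{align*}

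The cleanest conceptual route is to dualize: $\pi^{*}$ realises $C^{*}$ as a unital subalgebra of $\M(2,\Bbbk)$ of the same dimension, reducing the problem to the classification of unital subalgebras of $\M(2,\Bbbk)$ up to conjugation. The case $\dim C = 1$ is immediate. For $\dim C = 3$, a rank argument (three nonzero orthogonal idempotents do not fit in rank $2$) rules out a semisimple $C^{\ast}$, so by Wedderburn $C^{\ast}$ has non-zero Jacobson radical and is conjugate to the algebra of $2\times 2$ upper triangular matrices; dualizing its standard basis gives two grouplikes (dual to the diagonal idempotents) and a skew-primitive (dual to $E_{12}$) linking them, which is exactly the claimed form. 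For $\dim C = 2$, the $2$-dimensional unital subalgebras of $\M(2,\Bbbk)$ are conjugate to $\Bbbk\times\Bbbk$ or $\Bbbk[y]/(y^2)$; the former yields two grouplikes directly, and for the latter the algebraic closure of $\Bbbk$ is used to produce two distinct grouplikes by solving an explicit quadratic (e.g., when $\com(g) = g\otimes g + c\,u\otimes u$ with $c \ne 0$, take $g \pm \sqrt{c}\,u$).

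An intrinsic alternative stays entirely on the coalgebra side: coassociativity applied to any non-trivial linear dependence among $\{g,h,u,v\}$ pins down its shape. In the $\dim C = 3$ case, for instance, a dependence of the form $g = h + \alpha u + \beta v$ forces $\alpha\beta = -1$, and then $G_1 = g - \alpha u$ and $G_2 = g - \beta v$ are seen to be grouplike and linearly independent, with $u$ becoming a $(G_2, G_1)$-primitive once $\com(u)$ is rewritten in the new basis. The main obstacle will be the $\dim C = 2$ analysis, where one must rule out a grouplike-primitive basis in favour of a grouplike-grouplike basis; the crucial input is the algebraic closure of $\Bbbk$, which furnishes the square roots needed to exhibit enough grouplike elements inside $C$.
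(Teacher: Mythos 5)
The paper offers no proof of this statement --- it is imported verbatim from \cite[Thm.~2.1]{bitidasca} and stamped with a qed --- so there is nothing internal to compare against. Your strategy of dualizing, so that $C^\ast$ becomes a unital subalgebra of $\M(2,\Bbbk)$ classified up to conjugation, is the natural route, and your treatment of $\dim C=1$ and $\dim C=3$ is correct: a $3$-dimensional unital subalgebra acts reducibly (Burnside), hence is conjugate to the upper triangular algebra, whose dual basis consists of two grouplikes joined by a skew-primitive.

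The genuine gap is in the case $\dim C=2$ with $C^{\ast}\cong\Bbbk[y]/(y^2)$. You assert that algebraic closure produces two grouplikes by solving a quadratic, writing $\com(g)=g\otimes g+c\,u\otimes u$ with $c\neq0$. But the dual of $\Bbbk[y]/(y^2)$ has $\com(g)=g\otimes g$ and $\com(u)=g\otimes u+u\otimes g$, i.e.\ $c=0$: this coalgebra has exactly one grouplike and is \emph{not} spanned by grouplikes, and $g\pm\sqrt{c}\,u$ fails to be grouplike because the term $c\,u\otimes u$ is absent from $\com(g)$. (Your quadratic trick works exactly when $c\neq0$, but then $C^\ast\cong\Bbbk[y]/(y^2-c)\cong\Bbbk\times\Bbbk$ is the split case you already handled.) No repair is possible, because the statement as quoted is false for $\dim C=2$: the map $\M^\ast(2,\Bbbk)\to C$ sending $e_{11},e_{22}\mapsto g$, $e_{12}\mapsto u$, $e_{21}\mapsto 0$ is a surjective coalgebra map onto this two-dimensional coalgebra (its kernel $\operatorname{span}\{e_{21},\,e_{11}-e_{22}\}$ is a coideal). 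The correct conclusion in dimension $2$ must read ``$C$ is spanned by two grouplikes, or by a grouplike $g$ together with a nonzero $(g,g)$-primitive.'' You rightly sensed that $\dim C=2$ is the delicate case, but the difficulty is not a missing square root: it is that the claimed conclusion fails there. The discrepancy is harmless where the theorem is applied in this paper (the images land in Taft algebras, which have no nontrivial $(g,g)$-primitives), but your proof should record the extra case rather than argue it away.
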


The next theorem due to  \c{S}tefan has turned out
to be crucial for several classification results including the proof of
Proposition \ref{prop:natale-stefan}.
Similar arguments
were used before by Larson and Radford in \cite[Section 6]{LR3} to study the
order of the antipode in a semisimple Hopf algebra.

\begin{theorem}\label{thm:stefan}\cite[Thm. 1.4]{stefan} Let $D = \coM$.
\begin{enumerate}
 \item[$(i)$] For $f$  an antiautomorphism of $D$ such
that $\ord ( f^{2}) = n < \infty$ and $n > 1$, there exists a multiplicative
matrix $\be$ for
$D$ and a root of unity $\omega$ of order $n$ such that
$$f(e_{12}) = \omega^{- 1} e_{12},\quad f(e_{21}) = \omega e_{21},\quad
f(e_{11}) = e_{22},\quad  f(e_{22}) = e_{11}.$$

\item[$(ii)$] For $f$  an automorphism of $D$ of finite order $n$,
 there exist a multiplicative matrix $\be$ for $D$
and a root of unity $\omega$ of order $n$ such that $f(e_{ij}) = \omega^{i-j} e_{ij}$.
\qed
\end{enumerate}
 \end{theorem}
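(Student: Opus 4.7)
The plan is to dualize. Identify $D^{\ast} \cong M(2,\Bbbk)$ as algebras via the natural pairing, so that a multiplicative matrix $\{e_{ij}\}$ for $D$ is precisely the basis dual to a complete system of matrix units $\{E_{ij}\}$ in $M(2,\Bbbk)$. A coalgebra (anti)automorphism $f$ of $D$ transports, by duality, to an algebra (anti)automorphism $f^{\ast}$ of $M(2,\Bbbk)$ of the same order. Each part of the theorem thus reduces to producing matrix units in which $f^{\ast}$ takes the stated normal form; the statement about $D$ then follows by dualizing back.

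For part $(ii)$, apply Skolem--Noether to write $f^{\ast}(A) = P A P^{-1}$ for some $P \in \GL(2,\Bbbk)$. Finite order forces $P^n$ to be central, hence scalar; after rescaling $P$ we may assume $P^n = I$. Over the algebraically closed field $\Bbbk$, $P$ is then diagonalizable with $n$-th roots of unity on the diagonal. In the eigenbasis $P = \diag(\lambda_1,\lambda_2)$, the associated matrix units satisfy $f^{\ast}(E_{ij}) = \lambda_i \lambda_j^{-1} E_{ij}$. Setting $\omega = \lambda_1 \lambda_2^{-1}$ one obtains $f^{\ast}(E_{ij}) = \omega^{i-j} E_{ij}$, and $\omega$ must have order exactly $n$: otherwise some smaller power of $P$ would already be scalar and $f^{\ast}$ would have order less than $n$. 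Dualizing delivers the multiplicative matrix and action described in $(ii)$.

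For part $(i)$, set $\phi = f^{\ast}$ and $g = \phi^2$, an algebra automorphism of $M(2,\Bbbk)$ of order $n$. Part $(ii)$ applied to $g$ produces matrix units $\{E_{ij}\}$ and a primitive $n$-th root $\omega$ with $g(E_{ij}) = \omega^{i-j} E_{ij}$. Since $\phi$ commutes with $g$, it preserves every eigenspace of $g$. When $n > 2$ these eigenspaces are the lines $\Bbbk E_{12}$, $\Bbbk E_{21}$ and the diagonal subspace $\Bbbk E_{11} \oplus \Bbbk E_{22}$; hence $\phi(E_{12}) = \alpha E_{12}$, $\phi(E_{21}) = \beta E_{21}$ for scalars $\alpha,\beta$, while $\phi$ restricts to an antiautomorphism of the diagonal subalgebra. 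Combining the facts that $\phi(E_{11}),\phi(E_{22})$ are orthogonal idempotents summing to $1$ with the relations $\phi(E_{12})\phi(E_{21}) = \phi(E_{22})$ and $\phi(E_{21})\phi(E_{12}) = \phi(E_{11})$, one deduces $\phi(E_{11}) = E_{22}$, $\phi(E_{22}) = E_{11}$, and $\alpha\beta = 1$. A rescaling of $E_{12}$, compensated on $E_{21}$ so as to preserve the matrix unit relations, absorbs $\alpha$ into a power of the chosen root of unity and puts $\phi$ in the form asserted in $(i)$; dualizing returns the stated action on $D$. The low-order case $n = 2$, in which the $(-1)$-eigenspace of $g$ is two dimensional, is treated directly by choosing a basis of that eigenspace in which $\phi$ exchanges and scales the off-diagonal units correctly.

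The main difficulty lies in $(i)$. Skolem--Noether essentially hands us $(ii)$ for free, but in the antiautomorphism case one must combine eigenspace information for $\phi^2$ with the multiplicative relations among matrix units; identifying the precise scalar $\alpha$ and reconciling it with the asserted primitive $n$-th root $\omega$ requires a consistent choice of square roots, and there is a real subtlety in how $\omega$ is normalized, especially when $n$ is even. A separate, hands-on argument is also needed in the low-order case, where the eigenspaces of $\phi^2$ collapse.
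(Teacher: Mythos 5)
Your overall strategy --- dualize to $M(2,\Bbbk)$, apply Skolem--Noether in the automorphism case, and combine the eigenspace decomposition of $\phi^2$ with the matrix-unit relations in the antiautomorphism case --- is sound, and it is essentially the dual formulation of what the paper does: Theorem \ref{thm:stefan} is quoted from \c{S}tefan, and the paper's own argument is the $d=2$ specialization of Proposition \ref{prop:gen-stefan-vay}, which fixes a comatrix basis, represents $f$ by a matrix $A$, and diagonalizes $A(A^{-1})^t$ (the matrix affording $f^2$) to force $A$ to be antidiagonal. Your part $(ii)$ is complete, and in part $(i)$ the derivation, for $n>2$, of $\phi(E_{11})=E_{22}$, $\phi(E_{22})=E_{11}$, $\phi(E_{12})=\alpha E_{12}$, $\phi(E_{21})=\alpha^{-1}E_{21}$ is correct.

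The gap is the last step of $(i)$. The rescaling $E_{12}\mapsto tE_{12}$, $E_{21}\mapsto t^{-1}E_{21}$ cannot ``absorb'' $\alpha$: since $\Bbbk E_{12}$ is $\phi$-stable, $\alpha$ is an eigenvalue of $\phi$ and is unchanged by any such change of matrix units (the only remaining freedom is the swap $E_{11}\leftrightarrow E_{22}$, which replaces $\alpha$ by $\alpha^{-1}$). All that is determined is $\alpha^2=\omega^{\pm1}$ with $\ord(\omega)=n$, so $\alpha$ is a root of unity of order $n$ or $2n$, and when $n$ is even it necessarily has order $2n$ (an element of odd order cannot square to an element of even order). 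Consequently the asserted normal form with $\omega$ of order exactly $n$ is not reachable in general: the displayed formulas give $\ord(f^2)=\ord(\omega^2)$, which equals $n/2$ when $\ord(\omega)=n$ is even, so for even $n$ no multiplicative matrix of the stated kind can exist at all. What is actually provable --- and what the paper's Proposition \ref{prop:gen-stefan-vay}$(ii)(a)$ delivers --- is that $\alpha$ is a root of unity whose \emph{square} has order $n$. So the ``subtlety'' you flag at the end is a genuine obstruction rather than a bookkeeping choice of square roots; you would need either to restrict to odd $n$, or to restate $(i)$ with a root of unity of order $2n$, or to supply an argument you have not given. The $n=2$ case, which you defer to a ``direct'' treatment, is exactly where this bites hardest (there $\phi$ restricted to the two-dimensional $(-1)$-eigenspace of $\phi^2$ has eigenvalues $\pm i$, of order $4$) and is not actually carried out.
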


The proof of the following generalization
is due to C. Vay.
Although similar results seem to be
well-known, we include it for completeness.

\begin{prop} \label{prop:gen-stefan-vay}  Let $D = \coMd$.
\begin{enumerate}
\item[$(i)$] For $f$  an automorphism of $D$ of finite order $n$,
 there exist a multiplicative matrix $\be$ for $D$
and $\omega_1, \ldots , \omega_ d\in\Bbbk^\times$ such
that $f(e_{ij})=\omega_i\omega_j^{-1}e_{ij}$ and
$\omega_i\omega_j^{-1}$ are $n$-roots of unity.
\item[$(ii)$] Let $f$ be an anti-automorphism of $D$
with $1<n=\ord(f^2)<\infty$.
Then there exists a multiplicative matrix $\be$ for $D$,
a symmetric matrix $A_+\in\GL(\ku,a_+)$, an
anti-symmetric matrix $A_-\in\GL(\Bbbk,a_-)$, $A_j\in\GL(\Bbbk,a_j)$
and $\lambda_j\in\Bbbk^\times$ for
$j=1,\ldots , s$ such that $d=a_++a_-+2a_1+\cdots+2a_s$
and the matrix of $f$ corresponding to $\be$ is
\begin{align*}
A&=\left[
   \begin{matrix}
   0 & 0 & 0 & 0 & 0 & 0 & 0 & \lambda_1A_{1}^t \\ 
   0 & 0 & 0 & 0 & 0 & 0 & \diagup & 0 \\ 
   0 & 0 & 0 & 0 & 0 & \lambda_{s} A_{s}^t & 0 & 0 \\ 
   0 & 0 & 0 & A_{+} & 0 & 0 & 0 & 0 \\ 
   0 & 0 & 0 & 0 & A_{-} & 0 & 0 & 0 \\ 
   0 & 0 & A_{s} & 0 & 0 & 0 & 0 & 0 \\ 
   0 & \diagup & 0 & 0 & 0 & 0 & 0 & 0 \\ 
   A_{1} & 0 & 0 & 0 & 0 & 0 & 0 & 0
  \end{matrix}
  \right].
\end{align*}
Moreover,
\begin{enumerate}
\item[$(a)$] $\lambda_ i^2$, $\lambda_i\lambda_j^{-1}$,
$\lambda_i\lambda_j$ are $n$-roots of unity.
\item[$(b)$] If $a_+\neq0$ then $\lambda_i$ are $n$-roots of unity.
\item[$(c)$] If $a_-\neq0$ then $-\lambda_i$ are $n$-roots of unity.
\end{enumerate}
\end{enumerate}
In particular, for $d=3$ and $n>2$
\begin{align*}
&&A_\lambda=\left[
   \begin{matrix}
   0 & 0 & \lambda \\ 
   0 & 1 & 0\\ 
   1 & 0 & 0 \\ 
  \end{matrix}
  \right],
&& f(\be)=
\left[
   \begin{matrix}
   e_{33} & \lambda e_{23} & \lambda e_{13} \\ 
   \lambda^{-1} e_{32} & e_{22} & e_{12}\\ 
   \lambda^{-1} e_{31} & e_{21} & e_{11} \\ 
  \end{matrix}
  \right] && \text{ and }\qquad f^{2}(e_{ij}) = \lambda^{j-i}e_{ij}.
\end{align*}
\end{prop}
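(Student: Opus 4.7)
The plan is to dualize the problem. Since $D = \coMd$ is the dual coalgebra of the matrix algebra $\M(d,\ku)$, the map $f$ corresponds to a linear map $f^\ast\colon \M(d,\ku)\to\M(d,\ku)$ of the same kind (automorphism or anti-automorphism), and a multiplicative matrix $\be = (e_{ij})$ of $D$ corresponds precisely to a choice of basis of $\ku^d$, with the matrix units $E_{ij}$ forming the dual basis of $\be$.

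For part $(i)$, Skolem--Noether gives $f^\ast(X) = PXP^{-1}$ for some $P \in \GL(d,\ku)$. Since $f^n = \id$, the inner automorphism $\ad(P)$ has order $n$, so $P^n = \gamma I$ for a scalar $\gamma$; hence $P$ is diagonalizable. After a change of basis we may assume $P = \diag(\omega_1,\ldots,\omega_d)$, and then $PE_{ij}P^{-1} = \omega_i\omega_j^{-1}E_{ij}$. Dualizing, the resulting multiplicative matrix $\be$ satisfies $f(e_{ij}) = \omega_i\omega_j^{-1} e_{ij}$, and $f^n = \id$ gives $(\omega_i\omega_j^{-1})^n = 1$.

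For part $(ii)$, composing $f^\ast$ with transposition produces an algebra automorphism, so Skolem--Noether gives $f^\ast(X) = B X^t B^{-1}$ for some $B \in \GL(d,\ku)$. A direct computation yields $(f^\ast)^2(X) = J^{-1}XJ$ with $J = B^t B^{-1}$, and the order hypothesis forces $J^n = \mu I$ for a scalar $\mu$; in particular $J$ is diagonalizable. The key step is to reinterpret the data via the nondegenerate bilinear form $\beta(x,y) = x^t B y$ on $\ku^d$: setting $T = J^t = B^{-t}B$, a short computation gives the symmetrization identity $\beta(Ty,x) = \beta(x,y)$, which implies $T\in O(\beta)$. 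Decomposing $\ku^d = \bigoplus_\lambda W_\lambda$ into $T$-eigenspaces and applying the identity twice yields $\beta(x,y)(1 - \mu\lambda) = 0$ for $x\in W_\lambda$, $y\in W_\mu$, so $\beta(W_\lambda, W_\mu) = 0$ unless $\mu = \lambda^{-1}$.

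Thus $W_1$ carries a nondegenerate symmetric block $A_+$ (since $T|_{W_1} = \id$), $W_{-1}$ carries an antisymmetric block $A_-$, and for each non-$\pm1$ eigenvalue pair $(\lambda_j,\lambda_j^{-1})$ the form restricts to a nondegenerate pairing $W_{\lambda_j}\times W_{\lambda_j^{-1}}\to\ku$. The identity $\beta(y,x) = \lambda_j\beta(x,y)$ for $x\in W_{\lambda_j}$, $y\in W_{\lambda_j^{-1}}$ forces the two reciprocal off-antidiagonal blocks to be $A_j$ and $\lambda_j A_j^t$; ordering the basis as $W_{\lambda_1^{-1}}\oplus\cdots\oplus W_{\lambda_s^{-1}}\oplus W_1\oplus W_{-1}\oplus W_{\lambda_s}\oplus\cdots\oplus W_{\lambda_1}$ puts $B$ in the block antidiagonal form of the statement. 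The constraints on the $\lambda_i$ then follow from $\lambda_i^n = \mu$ together with the fact that $\lambda_i^{-1}$ is also an eigenvalue, which forces $\mu^2=1$; if $a_+\ne0$ then $\mu=1$, while if $a_-\ne 0$ then $\mu=(-1)^n$, giving (b) and (c). For $d=3$ and $n>2$ the only admissible structure is $a_+=1$, $a_-=0$, $s=1$, $a_1=1$, and after normalizing $A_+=A_1=1$ one recovers the explicit $A_\lambda$. The main obstacle will be the bookkeeping that carries the abstract eigenspace decomposition to the stated block antidiagonal form, in particular the placement of $\lambda_j A_j^t$ versus $A_j$ as dictated by the symmetrization identity and the chosen basis ordering.
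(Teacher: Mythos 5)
Your proof is correct and is essentially the paper's argument transported to the dual algebra ${\mathcal M}(d,\Bbbk)$: in both cases one diagonalizes the finite-order (inner) automorphism afforded by $f^2$, observes that the matrix of $f$ can only pair the $\lambda$- and $\lambda^{-1}$-eigenspaces and must vanish elsewhere, and reads off the block antidiagonal form; your symmetrization identity $\beta(Ty,x)=\beta(x,y)$ is precisely the paper's entrywise relation $a_{ij}=\omega_i a_{ji}$, and items $(a)$--$(c)$ are obtained in both proofs from the fact that the eigenvalues of $f^2$ are $n$-th roots of unity. The Skolem--Noether/bilinear-form packaging is a clean way to see why the $+1$ block is symmetric and the $-1$ block antisymmetric, but the decomposition and the key computation are the same as in the paper.
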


\begin{proof}
Let $f$ be a coalgebra automorphism as in $(i)$.
Fix a multiplicative matrix $\tilde\be =
(\tilde{e}_{ij})_{1\leq i,j\leq n}$ for $D$ and
let $\tilde{B}$ be the matrix of $f$ corresponding to
$\tilde\be $.
Then $\tilde{B}^n=b\id$ for some $b\in\Bbbk^\times$.
In particular, there exist $U\in\GL(\Bbbk, d)$ and
$\omega_1, \ldots , \omega_d\in\Bbbk^\times$
such that $U\tilde{B}U^{-1}=\diag(\omega_1, \ldots, \omega_d)$.
If we take $\be=U\tilde{\be}U^{-1}$ then $f$ is afforded by
$\diag(\omega_1, \ldots, \omega_d)$ and $(i)$ follows.

Now suppose that $f$ is a coalgebra anti-automorphism as in
$(ii)$
and $\tilde{A}$ be the matrix of $f$ corresponding to $\tilde\be$.
Then $f^2$ is afforded by $B=\tilde{A}(\tilde{A}^{-1})^t$.
As before, there exist $U\in\GL(\Bbbk, d)$ and
$\omega_1, \ldots , \omega_d\in\Bbbk^\times$
such that $UBU^{-1}=\diag(\omega_1, \ldots, \omega_d)$.
Since $n>1$, $\omega_{i_0}\neq1$ for some $i_0$.
If we take $\be=U\tilde{\be}U^{-1}$ then $f$ is afforded by
$A=U\tilde{A}U^{t}$ with respect to $\be$ and
$A(A^{-1})^t=UBU^{-1}=\diag(\omega_1, \ldots, \omega_d)$
is the matrix of $f^2$  corresponding to $\be$. Therefore,
$a_{ij}=\omega_ia_{ji}$ for all $1\leq i,j\leq d$ and hence
$a_{ij}= \omega_{i}\omega_{j}a_{ij}$ for all $1\leq i,j\leq d$.
 If $w_{j}w_{i_0}\neq1$ for all $1\leq j\leq d$, we have that
 $a_{ji_0}=a_{i_0j}=a_{i_0i_0}=0$. Since $A\in\GL(\Bbbk, d)$ it is impossible.
 Then for all $i$ there exists $j$ such that $\omega_i\omega_j=1$.
 If $d=2$ then $A=\left[\begin{smallmatrix}0&a_{12}\\ a_{21}&0
 \end{smallmatrix}\right]$ and Theorem
 \ref{thm:stefan} follows.
 We perform a new change of basis using a
 matrix permutation such that $\{\omega_{1}, \ldots, \omega_{d}\}$
$=\{\lambda_1, \ldots, \lambda_1, \ldots, \lambda_s, \ldots, \lambda_s, 1,
\ldots, -1,\lambda_s^{-1},\ldots, \lambda_s^{-1},
\ldots, \lambda_1^{-1},\ldots, \lambda_1^{-1}\}$.
If $I_{\pm}=\{i\leq d|\omega_i=\pm1\}$ and $I_{\pm l}=
\{i\leq d|\omega_i=\lambda_l^{\pm1}\}$ then $a_{ij}=0$
for all $(i,j)\notin \bigcup_{l=1}^sI_{\pm l}\times
I_{\mp l}\cup I_{+}^2\cup I_{-}^2$
since $a_{ij}=\omega_i\omega_ja_{ij}$. Then
\begin{align*}
A&=\left[
   \begin{matrix}
   0 & 0 & 0 & 0 & 0 & 0 & 0 & \lambda_1A_{1}^t \\ 
   0 & 0 & 0 & 0 & 0 & 0 & \diagup & 0 \\ 
   0 & 0 & 0 & 0 & 0 & \lambda_{s} A_{s}^t & 0 & 0 \\ 
   0 & 0 & 0 & A_{+ } & 0 & 0 & 0 & 0 \\ 
   0 & 0 & 0 & 0 & A_{-} & 0 & 0 & 0 \\ 
   0 & 0 & A_{s} & 0 & 0 & 0 & 0 & 0 \\ 
   0 & \diagup & 0 & 0 & 0 & 0 & 0 & 0 \\ 
   A_{1} & 0 & 0 & 0 & 0 & 0 & 0 & 0
  \end{matrix}
  \right],
\end{align*}
where $A_{\pm}$ is a $|I_{\pm}|\times|I_{\pm}|$-matrix
and $A_{l}$ is a $|I_{-l}|\times|I_{+l}|$-matrix.
Since $A$ is invertible, we must have that $|I_{-l}|=|I_{+l}|$.
Moreover, by the relations above, $A_+$ is symmetric
and $A_-$ is a anti-symmetric matrix.
Then set $a_{\pm}:=|I_{\pm}|$ and $a_l:=|I_{+l}|$
for all $l=1, \ldots, s$. The items $(a)$, $(b)$ and $(b)$
follow from $(i)$ for $f^2$.

\par If $d=3$ and $n>2$, the unique possibility
up to a change of basis is
$\diag\{\omega_1,\omega_2,\omega_3\} =
\diag\{\lambda, 1, \lambda^{-1}\}$. If $ f $ is an
anti-automorphism, we have that
$a_+=1$, $a_-=0$, $ a_{1}=1 $. In such case,
we make a new change of basis using
$\diag(A_1^{-\frac{1}{2}},A_+^{-\frac{1}{2}}, A_1^{-\frac{1}{2}})$
and the rest of the claim follows by direct computation.
\end{proof}


\section{The coradical filtration and some classification results}
\label{sec: coradical filtration}

In this section we present some results on dimensions of Hopf
algebras using a description of the coradical filtration due
to Nichols and presented  in \cite{andrunatale}, and more
recent results by D. Fukuda.  First we recall the definitions.
More detail can be found in \cite[Section
1]{andrunatale}.

Let $D$ be a coalgebra over $\Bbbk$ and denote by $(D_{n})_{n\in \N}$
its coradical filtration.
Then there exists a coalgebra projection
$\pi:D \to D_{0}$ from
$D$ to the coradical
$D_{0}$ with kernel $I$, see
\cite[5.4.2]{Mo}. Define the maps

$$\rho_{L}:= (\pi\ot \id)\com: D \to D_{0}\ot D \qquad\mbox{ and }\qquad
\rho_{R}:= (\id\ot \pi)\com: D \to D\ot D_{0},$$

\noindent and let $P_{n}$ be the sequence of subspaces defined recursively
by
\begin{align*}
P_{0} & = 0,\\
P_{1} & = \{x\in D:\ \com(x) = \rho_L(x) +\rho_R(x)\}
= \com^{-1}(D_{0}\ot I + I\ot D_{0}),\\
P_{n} & = \{x\in D:\ \com(x) - \rho_L(x) - \rho_R(x) \in
\sum_{1\leq i \leq n-1}P_{i}\ot P_{n-i}\}, \quad n\geq 2.
\end{align*}

Then by a result of Nichols, $P_{n} = D_{n}\cap I$ for $n\geq 0$,
see \cite[Lemma 1.1]{andrunatale}. Suppose that
$D_{0} = \bigoplus_{\tau \in \II} D_{\tau}$, where the $D_{\tau}$ are simple
coalgebras and $\dim D_{\tau} = d^{2}_{\tau}$.
Any $D_{0}$-bicomodule is a direct sum of simple $D_{0}$-sub-bicomodules
and every simple $D_{0}$-bicomodule has coefficient coalgebras
$D_{\tau}, D_{\gamma}$ and has dimension
$d_{\tau}d_{\gamma} = \sqrt{\dim D_{\tau}\dim D_{\gamma}}$
for some $\tau, \gamma \in \II$, where $d_{\tau},d_{\gamma}$
are the dimensions of the associated comodules of $D_{\tau}$ and
$D_{\gamma}$, respectively.

Let $H$ be a   Hopf algebra. Then $H_{n},P_{n}$ are
$H_{0}$-sub-bicomodules of $H$ via $\rho_R$ and $\rho_L$. As in
\cite{andrunatale}, \cite{fukuda-pq}, for all $n\geq 1$ we denote by
$P_{n}^{\tau,\gamma}$ the isotypic component of the
$H_{0}$-bicomodule of $P_{n}$ of type the simple bicomodule with
coalgebra of coefficients $D_{\tau}\ot D_{\gamma}$. If $D_{\tau} =
\Bbbk g$ for $g$ a grouplike, we may use the superscript $g$ instead
of $\tau$.  Let $P^{\tau,\gamma} = \sum_{n \geq 0}P_{n}^{\tau,\gamma}$.

For $\Gamma$ a set of grouplikes of a Hopf algebra $H$,
let
  $ P^{\Gamma,\Gamma}$ denote  $\sum_{g,h \in \Gamma}P^{g,h}
 $
  and let $H^{\Gamma, \Gamma}:= P^{\Gamma, \Gamma}
\oplus \Bbbk\Gamma$. If $\mathcal{D},\mathcal{E}$ are
   sets of simple subcoalgebras, let $P^{\mathcal{D}, \mathcal{E}}$ denote
   $\sum_{D \in \mathcal{D}, E \in \mathcal{E}}P^{D,E}$.

Let $H_{0,d}$ with $d\geq  1$ denote the
direct sum of the simple subcoalgebras of $H$ of dimension $d^{2}$.
It was noted in \cite{andrunatale} that $H_n = H_0 \oplus P_n$ and
   $|G(H)|$ divides the dimensions of $H_n$, $P_n$
and $H_{0,d}$ for all $n\geq 0$ and
   all $d \geq 1$, by the Nichols-Zoeller theorem.

Following D. Fukuda, we
say that the subspace $P_{n}^{\tau,\gamma}$ is
\textit{nondegenerate} if $P_{n}^{\tau,\gamma} \nsubseteq P_{n-1}$.
The next  results are due to D. Fukuda; note that $(ii)$  is a
generalization of \cite[Cor. 1.3]{andrunatale} for $n>1$.

\begin{lema}\label{lema:fukuda-deg} \label{lema:fukuda}
\label{lema:fukuda-deg-m}
Let
$D_{\tau}, D_{\gamma}$ be simple subcoalgebras of a Hopf algebra $H$.
\begin{enumerate}
 \item[$(i)$] \cite[Lemma 3.2]{fukuda-pq} If the subspace $P_{n}^{\tau,\gamma}$
is nondegenerate for some $n > 1$, then there exists a set of
simple coalgebras $
\{D_{1},\cdots ,D_{n-1} \}$ such that
$P_{i}^{\tau,D_{i}}$, $P_{n-i}^{D_{i},\gamma}$ are nondegenerate
for all $1\leq i\leq n$.
 \item[$(ii)$] \cite[Lemma 3.5]{fukuda-pq}
 For $g \in \GH$,
$\dim P_{n}^{\tau, \gamma} = \dim P_{n}^{S\gamma,S\tau}
= \dim P_{n}^{g\tau, g\gamma}= \dim P_{n}^{\tau g, \gamma g}$,
 where the superscript $S\alpha$ means that the simple coalgebra is
$S( D_{\alpha})$ and the superscript
$g\alpha$ or $\alpha g$ means
that the coalgebra is $g D_{\alpha}$ or
$D_{\alpha}g$.
 \item[$(iii)$] \cite[Lemma 3.8]{fukuda-pq} Let $C,D$ be
simple subcoalgebras such that $P_{m}^{C,D}$ is nondegenerate. If
$\dim C \neq \dim D$ or $\dim P_{m}^{C,D} - P_{m-1}^{C,D} \neq \dim
C$ then there exists a simple subcoalgebra $E$ such that
$P_{\ell}^{C,E}$ is nondegenerate for some $\ell\geq m+1$. \qed
\end{enumerate}
\end{lema}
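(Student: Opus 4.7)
The plan is to treat each part as a statement about the $H_0$-bicomodule structure of $P_n$, combining the recursive definition of $P_n$ with coassociativity and the basic fact that the coradical filtration is stable under the antipode and under multiplication by grouplikes.

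For $(i)$, I would start with $x \in P_n^{\tau,\gamma}$ whose class in $P_n/P_{n-1}$ is nonzero and expand
\[
\com(x) = \rho_L(x) + \rho_R(x) + \sum_{i=1}^{n-1} w_i, \qquad w_i \in P_i \otimes P_{n-i}.
\]
Each $w_i$ decomposes along the $H_0$-biisotypic decomposition of $P_i \otimes P_{n-i}$, and since $x$ is of isotype $(\tau,\gamma)$, only contributions in $\bigoplus_D P_i^{\tau, D}\otimes P_{n-i}^{D,\gamma}$ survive. For a fixed $i$, coassociativity $(\com\otimes\id)\com(x)=(\id\otimes\com)\com(x)$ forces the $(\tau,D,\gamma)$-component coming from the $i$-th slot on one side to match the corresponding component on the other, and if no simple $D=D_i$ made both $P_i^{\tau,D_i}$ and $P_{n-i}^{D_i,\gamma}$ nondegenerate, then rewriting $\com(x)$ modulo $\sum_j P_j \otimes P_{n-1-j} + P_{j-1}\otimes P_{n-j}$ would place $x$ in $P_{n-1}$, a contradiction.

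For $(ii)$, I would use three canonical maps that preserve the coradical filtration and permute bicomodule labels predictably. The antipode $S$ is a coalgebra antiautomorphism with $S(H_n)=H_n$, hence $S(P_n)=P_n$, and from $\com \circ S = (S \otimes S) \circ \com^{\op}$ it restricts to a linear isomorphism $P_n^{\tau,\gamma}\to P_n^{S\gamma,S\tau}$. Left multiplication $L_g$ by a grouplike $g$ is a linear automorphism of $H$ preserving each $H_n$ and $H_0$, so it preserves $P_n$; the identity $\com(gx)=(g\otimes g)\com(x)$ shows that $L_g$ carries the $(\tau,\gamma)$-isotypic piece onto the $(g\tau,g\gamma)$-isotypic piece. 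The symmetric argument with $R_g$ yields the last equality.

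For $(iii)$, I would argue by contrapositive. Assume no simple $E$ makes $P_\ell^{C,E}$ nondegenerate for any $\ell\geq m+1$, so the ``$C$-stripe'' $\bigoplus_E \sum_{\ell} P_\ell^{C,E}$ stabilizes at level $m$. The graded piece $P_m^{C,D}/P_{m-1}^{C,D}$ is a sum of simple $H_0$-bicomodules of left coefficient coalgebra $C$ and right coefficient coalgebra $D$, hence of dimension divisible by $d_C d_D$, where $\dim C = d_C^2$ and $\dim D = d_D^2$. Choose a nondegenerate $x \in P_m^{C,D}$; applying $(i)$ to $\com(x)$ and analyzing which isotypic components must appear at the next filtration level, coassociativity forces a nondegenerate $P_\ell^{C,E}$ at some $\ell\geq m+1$ unless $d_C = d_D$ (equivalently $\dim C = \dim D$) and the graded piece realizes exactly one copy of the simple $(C,D)$-bicomodule, i.e.\ its dimension equals $d_C d_D = \dim C$. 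That is the stated contrapositive.

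The main obstacle is part $(iii)$: turning the numerical failure $\dim C \neq \dim D$ or $\dim P_m^{C,D} - \dim P_{m-1}^{C,D} \neq \dim C$ into the existence of a specific new simple coalgebra $E$ at a strictly higher filtration level requires a delicate propagation argument through coassociativity and the bicomodule isotypic decomposition. Parts $(i)$ and $(ii)$ are essentially bookkeeping using coassociativity and the invariance of $H_n$, $H_0$, and $P_n$ under the relevant (anti)automorphisms.
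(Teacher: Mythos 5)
The paper offers no proof of this lemma --- all three parts are quoted from Fukuda's paper with a \qed\ --- so the comparison must be with Fukuda's arguments. Your sketches for $(i)$ and $(ii)$ follow essentially that route and are sound: $(i)$ is the element-level bookkeeping with $\com(x)=\rho_L(x)+\rho_R(x)+\sum_i w_i$, and $(ii)$ rests on $S$, $L_g$, $R_g$ preserving the coradical filtration and shifting bicomodule labels. One imprecision in $(ii)$: since $P_n=H_n\cap I$ depends on the chosen projection $\pi$, and $S(I)$, $gI$ need not equal $I$, the assertion $S(P_n)=P_n$ is not literally available; it is repaired by noting that $P_n\cong H_n/H_0$ canonically as an $H_0$-bicomodule and that $S$, $L_g$, $R_g$ induce the required (anti)isomorphisms on $H_n/H_0$, which is all the dimension count uses.

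Part $(iii)$ has a genuine gap. You claim that ``applying $(i)$ to $\com(x)$ and analyzing which isotypic components must appear at the next filtration level, coassociativity forces'' the continuation unless the two numerical equalities hold --- but no such mechanism can exist, because $(iii)$ is \emph{false for general coalgebras}, while $(i)$, coassociativity and the isotypic decomposition are pure coalgebra facts. Concretely: let $C$ have multiplicative matrix $(c_{ij})_{1\le i,j\le 2}$, let $g$ be grouplike, and let $V=\Bbbk v_1\oplus \Bbbk v_2$ with $\com(v_j)=g\ot v_j+\sum_i v_i\ot c_{ij}$ and $\e(v_j)=0$; one checks coassociativity and counitality directly, so $D=\Bbbk g\oplus C\oplus V$ is a $7$-dimensional coalgebra with coradical $\Bbbk g\oplus C$, in which $P_1^{g,C}=V$ is nondegenerate, $\dim \Bbbk g=1\neq 4=\dim C$, yet $P_\ell=P_1$ for all $\ell$, i.e.\ there is no continuation to any higher level. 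Hence any correct proof must use the Hopf structure essentially, and Fukuda's does: the row $C\oplus\bigoplus_E P^{C,E}$ is a right coideal, dually a sum of copies of the projective cover of the simple $H^\ast$-module attached to $C$; because a finite-dimensional Hopf algebra is Frobenius, the last radical (Loewy) layer of that indecomposable is its socle, a \emph{single} simple module whose coefficient coalgebra is obtained from $C$ by the Nakayama permutation --- for Hopf algebras a grouplike twist of $S^{\pm2}$ --- so by part $(ii)$ it has the same dimension as $C$, and the final jump is exactly $d_Cd_D=\dim C$. Your divisibility remark (the jump is divisible by $d_Cd_D$) is correct but far weaker; without this Frobenius/injectivity input the contrapositive cannot be closed.
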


The following proposition generalizes  \cite[Cor. 4.3]{bitidasca}
by giving a better lower bound for the dimension
of a non-cosemisimple Hopf algebra with
no nontrivial skew-primitive elements.

\begin{prop}\label{prop:biti-dasca}\label{cor:bitidasca-p1}
Let $H$ be a non-cosemisimple Hopf
algebra with no nontrivial skew-primitives.
\begin{enumerate}
  \item[$(i)$]
For any $g \in G(H)$ there exists a simple subcoalgebra $C$ of
$H$ of dimension $> 1$ such that $P_{1}^{C,g}\neq 0$, $P_k^{C,D}$ is
nondegenerate for some $k>1$ and $D$ simple of the same dimension as
$C$, and $P_m^{g,h}$ is nondegenerate for some $m>1$ and $h$
grouplike.
\item[$(ii)$]
Suppose $H_0\cong \Bbbk G\mas{\mathcal
M}^*(n ,\Bbbk) \oplus \sum_{i=1}^t{\mathcal M}^*(n_i,\Bbbk)$ with $t\geq 0$,
$2\leq n \leq n_1 \leq \ldots \leq n_t$. Then
$$
\dim H \geq \dim(H_0) + (2n + 1)|G| + n^2.
$$
\end{enumerate}
\end{prop}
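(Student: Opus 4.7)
The plan for part (i) is to leverage the invariances of Lemma \ref{lema:fukuda-deg}(ii) together with the iterative procedure of Lemma \ref{lema:fukuda-deg}(iii). Since $H$ is non-cosemisimple we have $P_1\neq 0$, and the hypothesis on skew-primitives forces $P_1^{g,h}=0$ for all $g,h\in G(H)$. Hence some isotypic $P_1^{\tau,\gamma}$ is nonzero with $(\tau,\gamma)$ not both grouplike. The first step is to produce a nonzero $P_1^{C_0,g_0}$ with $C_0$ non-grouplike and $g_0$ grouplike: if $\tau$ is non-grouplike and $\gamma$ grouplike, set $(C_0,g_0)=(\tau,\gamma)$; if $\tau$ is grouplike and $\gamma$ non-grouplike, the antipode invariance $\dim P_1^{\tau,\gamma}=\dim P_1^{S\gamma,S\tau}$ swaps to the required case. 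The remaining case, where both $\tau$ and $\gamma$ are non-grouplike, has to be excluded by an argument combining the $H_0$-bicomodule structure of $H_1$ with the Hopf algebra structure on $H$; this is the delicate point. Once $(C_0,g_0)$ is in hand, the $G$-invariance of Lemma \ref{lema:fukuda-deg}(ii) gives $P_1^{C_0 g_0^{-1}g,g}\neq 0$ for every $g\in G(H)$, so taking $C=C_0 g_0^{-1}g$ yields the first clause of~(i).

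The remaining clauses of (i) follow by iterated application of Lemma \ref{lema:fukuda-deg}(iii). Starting from the (trivially nondegenerate) $P_1^{C,g}$, the inequality $\dim C>1=\dim \Bbbk g$ allows us to extract a simple subcoalgebra $E_1$ and $\ell_1\geq 2$ with $P_{\ell_1}^{C,E_1}$ nondegenerate. If the terminating conditions $\dim E_1=\dim C$ and $\dim P_{\ell_1}^{C,E_1}/P_{\ell_1-1}^{C,E_1}=\dim C$ are not both satisfied, the lemma applies again to produce $(E_2,\ell_2)$ with $\ell_2>\ell_1$; finite-dimensionality of $H$ forces termination at a pair $(k,D)$ with $\dim D=\dim C$ and $k\geq 2$, which is the second clause. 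For the third clause, antipode invariance turns $P_1^{C,g}\neq 0$ into $P_1^{g^{-1},SC}\neq 0$; iterating Lemma \ref{lema:fukuda-deg}(iii) with the first index fixed at the grouplike $g^{-1}$, termination forces the final second index to have dimension $1$, i.e., to be a grouplike $h$, and a further $G$-translation restores the first index to $g$.

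Part (ii) follows by summing disjoint lower bounds on the isotypic blocks of $P_\infty:=\bigcup_n P_n$. For each $g\in G$ the pair $(C_g,g)$ from (i) has $\dim C_g\geq n^2$, so the simple $H_0$-bicomodule with coefficient coalgebras $(C_g,\Bbbk g)$ has dimension $d_{C_g}\geq n$ and thus $\dim P_1^{C_g,g}\geq n$; antipode invariance yields dually $\dim P_1^{g,C'_g}\geq n$ with $C'_g$ non-grouplike. These $2|G|$ isotypic blocks are pairwise distinct and together contribute at least $2n|G|$ to $\dim P_1$. The grouplike/grouplike blocks from the nondegenerate $P_{m_g}^{g,h_g}$ ($m_g>1$) contribute at least $1$ each in disjoint blocks, summing to $\geq|G|$, and the non-grouplike/non-grouplike block from the nondegenerate $P_k^{C,D}$ ($k>1$, $\dim D=\dim C\geq n^2$) has simple bicomodule of dimension $\geq n^2$, contributing $\geq n^2$ in a block disjoint from all the others. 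Altogether,
\begin{align*}
\dim H=\dim H_0+\dim P_\infty\geq \dim H_0+2n|G|+|G|+n^2=\dim H_0+(2n+1)|G|+n^2.
\end{align*}

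The main obstacle is the first step of~(i): ruling out that $P_1$ is supported entirely on non-grouplike/non-grouplike isotypic components. The bicomodule invariances of Lemma \ref{lema:fukuda-deg}(ii) alone do not suffice, and one must genuinely exploit the Hopf algebra structure of $H$ to show that some grouplike must appear as a coordinate of the isotypic decomposition of $P_1$.
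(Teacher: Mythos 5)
Your proof has a genuine gap, and it sits exactly where you flag it: the very first clause of (i), namely that for every $g\in G(H)$ there is a simple subcoalgebra $C$ of dimension $>1$ with $P_1^{C,g}\neq 0$. Your reductions via Lemma \ref{lema:fukuda-deg}$(ii)$ only dispose of the cases where some nonzero isotypic component $P_1^{\tau,\gamma}$ already has a grouplike index; the case where $P_1$ is supported entirely on components with \emph{both} indices non-grouplike is left as ``the delicate point'' with no argument supplied. But that case is the entire content of the claim: the bicomodule symmetries of Lemma \ref{lema:fukuda-deg}$(ii)$ are pure coalgebra statements and cannot rule it out, as you concede. The paper does not prove this step either --- it cites \cite[Cor.~4.2]{bitidasca}, which is precisely this statement (and whose proof genuinely uses the Hopf algebra structure, e.g.\ that the subalgebra generated by a simple subcoalgebra is a sub-Hopf algebra, together with an analysis of wedges/links between simple subcoalgebras). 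Without either reproducing that argument or citing it, your part (i) is unproven, and part (ii) collapses with it since the $2n|G|$ term depends on the pairs $(C_g,g)$ and $(g,C'_g)$ produced in (i).

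The remainder of your argument is sound and consistent with the paper's route: the iteration of Lemma \ref{lema:fukuda-deg}$(iii)$ (terminating by finite-dimensionality, with the first application already forcing level $\geq 2$) correctly yields the second and third clauses of (i), and your block-by-block count in (ii) --- $2n|G|$ from the mixed blocks, $|G|$ from the grouplike/grouplike blocks, $n^2$ from one non-grouplike/non-grouplike block, all pairwise disjoint --- recovers the stated bound $\dim H\geq \dim H_0+(2n+1)|G|+n^2$ directly, where the paper instead leans on \cite[Cor.~4.3]{bitidasca}. So the fix is localized: supply (or cite) the existence of a nonzero $P_1^{C,g}$ with $\dim C>1$, and the rest of your write-up stands.
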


\begin{proof}  Part $(i)$ follows from \cite[Cor. 4.2]{bitidasca} and
Lemma \ref{lema:fukuda-deg}$(iii)$. Part $(ii)$ follows from \cite[Cor.
4.3]{bitidasca}, Lemma \ref{lema:fukuda-deg-m} $(ii)$ and part $(i)$.
\end{proof}

We end this section with  a series of results whose proofs use the
results  of D. Fukuda in Lemma \ref{lema:fukuda}.

\begin{prop}\label{prop:1plusC}
Let $H$ be a Hopf algebra  with coradical $H_0 = \Bbbk \cdot1 \oplus
E$ where $E$ is the direct sum of simple coalgebras, each of
dimension divisible by $N^2$ for some $N > 1$. Let $\dim H \equiv d
\mod N$ with $0 \leq d \leq N-1$.  Let $1 \leq e \leq N$ with $e
\equiv d-1 \mod N$  and assume that $e \neq 1$. Then   $ \dim H
\geq \dim H_0 + 4N + 2N^2 + e$.
\end{prop}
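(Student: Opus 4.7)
The plan is to combine a modular arithmetic argument---which forces a $(1,1)$-type contribution of size $e$---with structural bounds on the bicomodule isotypic components of the kernel $I$ of $\pi\colon H\twoheadrightarrow H_0$.

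First, since each simple subcoalgebra in $E$ has dimension divisible by $N^2$, $\dim H_0\equiv 1\pmod N$. Decomposing $I=\bigoplus_{\tau,\gamma}P^{\tau,\gamma}$ as an $H_0$-bicomodule, each summand has dimension divisible by $d_\tau d_\gamma$, so modulo $N$ only the $(1,1)$-component need not vanish. Hence $\dim P^{1,1}\equiv d-1\equiv e\pmod N$. The asserted inequality forces $H$ to be non-cosemisimple (otherwise $\dim H=\dim H_0$ gives $d=1$ and $e=N$, contradicting $\dim H\geq\dim H_0+5N+2N^2$), and in characteristic zero a finite-dimensional Hopf algebra satisfies $\cP(H)=0$ (else $\Bbbk[x]\subseteq H$ would be infinite dimensional). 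Combined with $G(H)=\{1\}$, $H$ has no nontrivial skew-primitives, so Proposition~\ref{prop:biti-dasca}(i) with $g=1$ applies and yields (a) a simple subcoalgebra $C$ with $d_C\geq N$ and $P_1^{C,1}$ nondegenerate; (b) a simple $D$ with $\dim D=\dim C$ and $P_k^{C,D}$ nondegenerate for some $k>1$; and (c) nondegeneracy of $P_m^{1,1}$ for some $m>1$. From (c), $\dim P^{1,1}\geq 1$, and then by the modular constraint $\dim P^{1,1}\geq e$.

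For the remaining $4N+2N^2$ I would use Lemma~\ref{lema:fukuda}(ii), which gives $\dim P^{C,1}=\dim P^{1,SC}\geq N$ (two distinct components contributing $2N$) and $\dim P^{C,D}=\dim P^{SD,SC}\geq N^2$ (two distinct components contributing $2N^2$ provided $D\neq SC$). Applying Lemma~\ref{lema:fukuda}(i) to the nondegenerate $P_m^{1,1}$ yields a chain $D_1,\ldots,D_{m-1}$ of simples in $E$ with $P_1^{1,D_1}$ and $P_1^{D_{m-1},1}$ nondegenerate, whence $\dim P^{1,D_1},\dim P^{D_{m-1},1}\geq N$, and by Lemma~\ref{lema:fukuda}(ii) also $\dim P^{SD_1,1},\dim P^{1,SD_{m-1}}\geq N$. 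In the generic configuration these four isotypic components are distinct from $(C,1),(1,SC)$ and supply the missing $2N$, yielding the desired $4N+2N^2+e$.

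The hard part is the case analysis when some of $C,SC,D,D_1,D_{m-1}$ coincide and collapse the expected distinct components. In each such degenerate configuration one applies Lemma~\ref{lema:fukuda}(iii) to an already-nondegenerate space whose coefficient coalgebras have mismatched dimensions---such as $P_1^{C,1}$ (since $\dim C>1$), or $P_k^{C,SC}$ in the critical subcase $D=SC$ when its jump exceeds the minimum---to produce a fresh simple $F$ with $P_\ell^{C,F}$ nondegenerate at a higher filtration level, which supplies the missing $N$- or $N^2$-contribution. Tracking these cases systematically shows the cumulative total always reaches $\dim H_0+4N+2N^2+e$.
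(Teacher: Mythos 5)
Your opening moves match the paper's: the congruence $\dim P^{1,1}\equiv e\pmod N$, and the observation that $G(H)=1$ together with $\cP(H)=0$ lets you invoke Proposition~\ref{prop:biti-dasca}$(i)$. But your count of the remaining $4N+2N^2$ has a genuine gap, and the telltale sign is that you never use the hypothesis $e\neq 1$. Your strategy is to accumulate \emph{distinct} isotypic components, each contributing a single minimal jump of $N$ or $N^2$, deferring to an unexecuted ``systematic case analysis'' the configurations where $C$, $SC$, $D$, $D_1$, $D_{m-1}$ collapse. The collapse can be total: if $E$ consists of a single simple subcoalgebra $D$ with $S(D)=D$, there are only two components of mixed type, $P^{1,D}$ and $P^{D,1}$, and every one of your level-one witnesses lands in them, giving only $2N$; likewise only one component of type $(\E,\E)$ exists. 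Your proposed rescue via Lemma~\ref{lema:fukuda}$(iii)$ does produce a nondegenerate $P_\ell^{C,F}$ at some higher level, but $F$ may again be $\Bbbk 1$ or the same $D$, and once a jump of a $(\E,\E)$-component equals exactly $\dim C$ the lemma gives nothing further, so it is not clear your bookkeeping ever reaches $4N+2N^2$. As written, the hard part of the proof is asserted rather than proved.

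The paper closes this by counting nondegenerate jumps at distinct filtration \emph{levels} inside possibly identical components, rather than counting distinct components. Setting $M=\max\{m \mid P_m^{X,1}\text{ nondegenerate for some }X\in\E\}$, one shows (using Lemmas~\ref{lema:fukuda}$(i)$ and $(iii)$) that $M+1$ is the top level of nondegeneracy of $P^{1,1}$, and then rules out $M=1$: if $M=1$ then $P^{1,1}=P_2^{1,1}$ has dimension congruent to $e\geq 2$, so the jump at level $2$ exceeds $1=\dim \Bbbk 1$ and Lemma~\ref{lema:fukuda}$(iii)$ forces nondegeneracy above level $2$, a contradiction. This is exactly where $e\neq1$ enters. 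With $M\geq 2$ in hand, $P^{1,\E}$ and $P^{\E,1}$ each have nondegenerate jumps at the two distinct levels $1$ and $M$, each jump of size at least $N$, so their dimensions total at least $4N$ even when all the simple coalgebras involved coincide; and $P^{\E,\E}$ has jumps at levels $M-1\geq1$ and at some $\ell>M$, giving at least $2N^2$. You would need to supply this $M\geq 2$ argument, or an equivalent level-separation device, for your proof to go through.
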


\begin{proof} Let $\E$ denote the set of simple subcoalgebras
of dimension greater than $1$.
  Throughout, let $X,Y,Z,W$ denote  elements of $\E$.  Then $N$ divides the
dimension of $E, P^{\E,1}, P^{1,\E}, P^{\E,\E}$
 so that by our assumptions and the fact that $P^{1,1} \neq 0$ we have
$\dim P^{1,1} \equiv e \mod N$;  in particular $\dim
P^{1,1}
> 1
$.

Since by Proposition \ref{prop:biti-dasca}$(i)$ there are $X,Y$
such that  $P_1^{X,1}$,  $P_1^{1,Y}  $ are nonzero, we get that
\begin{align*}
 M  & = { \rm max}\{  m   |\ P_m^{X,1}
\mbox{ is nondegenerate for
some   }X \text  \}\\
& = {\rm max} \{ m |\ P_m^{1,Y} \mbox{ is
nondegenerate for some   }Y   \} \geq 1.
\end{align*}
Then by Lemma \ref{lema:fukuda-deg}$(iii)$,
  there is an integer $k >M$   such that
$P_k^{1,1}$ is nondegenerate. By Lemma \ref{lema:fukuda-deg}$(i)$,
$P_k^{1,1}$ nondegenerate   and the fact that $P_1^{1,1} = 0$,
implies that for some   $X \in \E$, $P_1^{1,X}$ and $P_{k-1}^{X,1}$ are
nondegenerate. Thus $k-1 \leq M <k$ and so $k = M+1$ and $M+1$ is
the largest integer $m$ such that $P_m^{1,1}$ is nondegenerate.

\par  Suppose that $M=1$ so $M+1 =2$. Then $P_2^{1,1} = P^{1,1}$ and
the dimension of $P^{1,1}_2$ must be greater than 1.
 Thus $\dim P_2^{1,1} -
\dim P_1^{1,1}
> 1$ and, by Lemma \ref{lema:fukuda-deg}$(iii)$, $P_t^{1,1}$
is nondegenerate for some $t >2$,
a contradiction. Thus $M \geq 2$ and $M+1 \geq 3$.

If $P_k^{1,X}$ is nondegenerate then the difference in
dimensions between $P_k^{1,X}$ and $P_{k-1}^{1,X}$ is a multiple of
$N$. Thus, the fact that we have nondegenerate spaces
$P_M^{1,X},P_M^{Y,1},P_1^{1,Z},P_1^{W,1}$ with $X,Y,Z,W$ not
necessarily distinct yields that the sum of the dimensions of the
$P^{1,\E}$ and $P^{\E,1}$ is at least $4N$.

By Lemma \ref{lema:fukuda-deg}$(iii)$, $P_\ell^{Y,Y^\prime}$ is
nondegenerate for some $ Y^\prime \in \E$ and $\ell > M$. By Lemma
\ref{lema:fukuda-deg}$(i)$, if $P_M^{1,X}$ is nondegenerate, then
$P_{M-1}^{Z,X}$ is nondegenerate for some $Z\in \E$.  Thus $\dim
P^{\E,\E} \geq 2N^2$.

Hence  $\dim H = \dim H_0 +    \dim P^{1,1}  + 2 \dim
P^{\E,1}   +   \dim P^{\E,\E} \geq \dim H_0 + e + 4N + 2N^2$.
\end{proof}

\begin{example}\label{ex: dim}
  Suppose that $H$ has dimension $N^3$ with $N>2$.
Then $H$ cannot have
coradical $\Bbbk \cdot 1 \oplus  \mathcal{M}^\ast(N, \Bbbk)^t$
 where $t = N-1$ or
$t = N-2$. For then we would have $N^3 \geq 1 + tN^2 + N - 1  + 4N +
2N^2
  \geq  (N-2)N^2 +     2N^2 + 5N
 \geq N^3 + 5N$, which is impossible.

 In particular, if $\dim H= 27$, then $H$
cannot have coradical $k\cdot 1
\oplus \mathcal{M}^\ast(3,\Bbbk)^t$ with $t = 1,2$
and if $\dim H=125$, $H$ cannot have coradical $\Bbbk \cdot 1 \oplus
\mathcal{M}^\ast(5,\Bbbk)^t$ with $t = 3,4$.
\end{example}

The next proposition applies the methods of this section to say
something about Hopf algebras
 of dimension $p^3$, $p$ an odd prime,
leading into the material in the next section.

\begin{proposition}\label{prop: dim simples dim 4}
Let $H$ be a non-copointed Hopf algebra of dimension $p^3$
with $p$ an odd prime.
Then $H$ has no simple $4$-dimensional
 subcoalgebra stable under the antipode $S$ so that
$H_0 \ncong \Bbbk \cdot 1 \oplus \M^\ast(2, \Bbbk)$.  Furthermore, if
  $H_0 \cong \Bbbk\cdot 1 \oplus \mathcal{M}^\ast(2, \Bbbk)^t$
with $t >1$, then
  \begin{displaymath}
\dim H \geq \dim H_0 + 24 \text
{ if } p \equiv 1 \mod 4 \quad\text{ and }\quad
\dim H \geq \dim H_0 + 22
\text{ if } p \equiv 3\mod 4.
\end{displaymath}
\end{proposition}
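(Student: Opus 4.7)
The plan for the first assertion is to argue by contradiction: suppose $H$ has a simple $4$-dimensional subcoalgebra $C$ with $S(C)=C$, and let $K$ denote the sub-Hopf algebra of $H$ generated by $C$. By the Nichols--Zoeller theorem $\dim K$ divides $p^3$, and since $C\subseteq K$ the only possibilities are $\dim K\in\{p,p^2,p^3\}$. If $\dim K\in\{p,p^2\}$ then the classifications of Hopf algebras of these dimensions force $K$ to be pointed (a group algebra or a Taft algebra), incompatible with $K$ containing the non-pointed simple subcoalgebra $C$. Hence $K=H$, so $H$ is generated by $C$. The case of $H$ semisimple is ruled out by Masuoka's classification, under which every semisimple Hopf algebra of dimension $p^3$ has irreducible representations of $p$-power dimension, leaving no room for a $4$-dimensional simple subcoalgebra. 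Thus $H$ is nonsemisimple and Proposition~\ref{prop:natale-stefan} applies, producing a central exact sequence $\Bbbk^G\hookrightarrow H\twoheadrightarrow A$ with $A^{\ast}$ pointed nonsemisimple.

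Writing $p^3=|G|\cdot\dim A$, the extremes $|G|=1$ and $|G|=p^3$ are immediately excluded: the first makes $H^{\ast}=A^{\ast}$ pointed, violating non-copointedness, while the second forces $H=\Bbbk^G$ to be semisimple. For $|G|\in\{p,p^2\}$, I would dualize to the exact sequence $A^{\ast}\hookrightarrow H^{\ast}\twoheadrightarrow\Bbbk G$, exhibiting $H^{\ast}$ as an extension of the pointed Hopf algebra $A^{\ast}$ by the group algebra of the abelian group $G$; standard arguments on such extensions (since $\Bbbk G$ is a normal quotient of $H^{\ast}$ and both factors are pointed) show $H^{\ast}$ must itself be pointed, again contradicting non-copointedness. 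The conclusion $H_0\ncong\Bbbk\cdot 1\oplus\M^{\ast}(2,\Bbbk)$ is then immediate: if $H_0$ had this form, the unique $4$-dimensional simple subcoalgebra would automatically be $S$-stable, contradicting what has just been proved.

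For the dimension bound with $t>1$, the starting point is Proposition~\ref{prop:biti-dasca}: since $G(H)=\{1\}$ forces all primitives to vanish in characteristic zero, $H$ has no nontrivial skew-primitives, and the proposition with $N=2$ and $e=2$ yields $\dim H\geq \dim H_0+18$. A parity argument modulo $4$ sharpens this. Fukuda's symmetry $\dim P^{\tau,\gamma}=\dim P^{S\gamma,S\tau}$ gives $\dim P^{\E,1}=\dim P^{1,\E}$, so their sum is divisible by $4$, and each isotypic component $P^{C_i,C_j}$ with $C_i,C_j\in\E$ has dimension divisible by $d_{C_i}d_{C_j}=4$. The decomposition
\begin{align*}
\dim H-\dim H_0=\dim P^{1,1}+\bigl(\dim P^{\E,1}+\dim P^{1,\E}\bigr)+\dim P^{\E,\E}
\end{align*}
then forces $\dim P^{1,1}\equiv p^3-1\pmod 4$, and since $\dim P^{1,1}>0$ by Proposition~\ref{prop:biti-dasca}(i), one obtains $\dim P^{1,1}\geq 4$ for $p\equiv 1\pmod 4$ and $\dim P^{1,1}\geq 2$ for $p\equiv 3\pmod 4$.

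The remaining improvement of $+4$ over the Proposition~\ref{prop:biti-dasca} bound comes from showing $\dim P^{\E,\E}\geq 12$. Since no $C_i$ is $S$-stable (by the first assertion), the permutation $C_i\mapsto S(C_i)$ on $\{C_1,\ldots,C_t\}$ is fixed-point-free. Applied to the two nondegenerate components in $P^{\E,\E}$ produced in the proof of Proposition~\ref{prop:biti-dasca} (namely $P^{D_1,X_1}$ at level $M-1$ from Lemma~\ref{lema:fukuda-deg}(i), and $P^{Y,Y'}$ at level $\ell>M$ from Lemma~\ref{lema:fukuda-deg}(iii)), Fukuda's symmetry produces further nondegenerate components $P^{SX_1,SD_1}$ and $P^{SY',SY}$; a short case analysis, distinguishing whether $S^2$ fixes $X_1$ and whether these $S$-images collapse onto those already present, shows that at least three distinct isotypic contributions of dimension $\geq 4$ always arise. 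Combining the three estimates produces the asserted bounds $\dim H\geq \dim H_0+24$ for $p\equiv 1\pmod 4$ and $\dim H\geq \dim H_0+22$ for $p\equiv 3\pmod 4$. The main obstacle is precisely this final case analysis, where one must exclude every configuration in which the fixed-point-free $S$-action permutes the $C_i$ so as to identify the new nondegenerate components with the two already counted, leaving $\dim P^{\E,\E}$ stuck at $8$.
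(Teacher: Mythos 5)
Your strategy tracks the paper's own proof -- generation of $H$ by the $S$-stable coalgebra plus Proposition \ref{prop:natale-stefan} for the first claim, and the mod $4$ evaluation of $\dim P^{1,1}$ together with a target bound $\dim P^{\E,\E}\geq 12$ for the second -- but two steps do not hold up. In the first part, your claim that an extension of a pointed Hopf algebra by a group algebra is again pointed is false in general: the $8$-dimensional Kac--Paljutkin algebra is an extension of $\Bbbk[C_2\times C_2]$ by $\Bbbk C_2$ and is not pointed. The cases $|G|\in\{p,p^2\}$ are instead excluded by first reducing to $H$ nonsemisimple, nonpointed and non-copointed and then invoking \cite{GG}: such an $H$ is simple as a Hopf algebra, whereas $\Bbbk^G$ in the central exact sequence would be a proper normal sub-Hopf algebra when $1<|G|<p^3$. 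Hence $G$ is trivial and $H^\ast=A^\ast$ is pointed, the desired contradiction.

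The more serious gap is in the second part: the inequality $\dim P^{\E,\E}\geq 12$ carries the entire improvement from $+18$ to $+22$/$+24$, and you leave it unproved -- you name the required case analysis as ``the main obstacle'' without carrying it out, and the components you propose to feed into it cannot close it. If $S(D_1)=X_1$, $S(X_1)=D_1$ and $(Y,Y')=(D_1,X_1)$, then $P^{SX_1,SD_1}=P^{D_1,X_1}=P^{Y,Y'}$, so all four of your nondegenerate pieces sit inside a single isotypic component with only two nondegenerate levels, which yields no more than $\dim P^{\E,\E}\geq 8$. The paper's proof avoids this trap by first pinning down $M\geq 2$ and $\ell\geq3$, assuming $M=2$, $\ell=3$, and decomposing $P_3^{1,1}$ and then $P_2^{D,1}$ via Lemma \ref{lema:fukuda-deg}$(i)$ to obtain nondegenerate spaces $P_1^{1,D}$, $P_2^{D,1}$, $P_1^{D,E}$, $P_1^{E,1}$; it then splits into the three cases $D=E$, $E=S(D)$ and $D\neq E\neq S(D)$, and in each case uses Lemma \ref{lema:fukuda-deg}$(iii)$ to manufacture \emph{additional} nondegenerate components at higher levels (such as $P_3^{D,X}$ and $P_k^{S(D),Y}$ with $k>1$). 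It is these extra levels, not merely the $S$-symmetry of Lemma \ref{lema:fukuda-deg}$(ii)$, that force $\dim P^{\E,\E}\geq 12$ (and $\geq 16$ except when $E=S(D)$). As written, your argument only establishes $\dim H\geq\dim H_0+16+\dim P^{1,1}$, which falls short of the stated bounds.
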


\begin{proof}
Let $\D$ be the set of simple subcoalgebras
of dimension $4$ and suppose that $D \in \D$ is
stable under the antipode $S$.
 Then $D$ must generate all of $H$
since any proper sub-Hopf algebra of $H$ is a
group algebra or a Taft Hopf algebra.
 Thus by Proposition \ref{prop:natale-stefan}, $H$ has pointed dual,
contradicting our assumptions. Thus for each
$D \in \D$, $S(D) \neq D$. This also shows that
$H_0 \ncong \Bbbk \cdot 1 \oplus
\M^\ast(2, \Bbbk)$.

\par
Now suppose that  $H_0 \cong
\Bbbk\cdot 1 \oplus \mathcal{M}^\ast(2, \Bbbk)^t$ with $t >1$.
The argument is similar to that in the previous proposition. Since
$2 \dim P^{\D, 1}$ and $\dim P^{\D,\D}$ are divisible by $4$, then
  $p^3 -1   \equiv \dim P^{1,1} \mod 4$.

Since $p \equiv p^3\mod 4$, then $P^{1,1}$ has dimension
a nonzero multiple of $4$
   if $p \equiv 1\mod 4$ and dimension a nonzero multiple of
$2$ if $p \equiv 3\mod 4$.  As in the proof of Proposition
\ref{prop:1plusC}, let $M$ be the largest integer such that
$P_M^{1,D}$ is nondegenerate for $D \in \D$ and then $\ell = M+1$ is the
largest integer such that $P_\ell^{1,1}$ is nondegenerate.
If $M+1= \ell=2$ then $P^{1,1}_2 = P^{1,1}$ has dimension $2$ or $4$
contradicting Lemma \ref{lema:fukuda}$(iii)$.  Thus $M \geq 2$ and $\ell
\geq 3$.
 Suppose that $\ell=3$ and $M=2$.

By  Lemma \ref{lema:fukuda}, there exist
$D,E \in \D$ so that $P_1^{1,D}, P_2^{D,1},P_1^{D,E},P_1^{E,1}$ are
nondegenerate. Thus twice the dimension of $P^{\D,1}$ is at least
$8$.

Consider the dimension of $P^{\D,\D}$.
If $D=E$ in the paragraph above,
then for some $X,Y \in \D$, and some $k>1$, the spaces
$P_1^{D,D}, P_1^{S(D),S(D)}, P_3^{D,X}, P_k^{S(D), Y}$ are nondegenerate
so the dimension of $P^{\D,\D}$ is at least $16$.
If $E = S(D)$, then there exists $X \in \D$ and $k >1$, such that
$P_1^{D,S(D)}, P_k^{S(D),X}, P_3^{D,X}$
is a set of  nondegenerate spaces.
Then the dimension of $P^{\D,\D}$ is at least $12$.
Finally, if $D\neq E \neq S(D)$, then the spaces
$P_1^{D,E}, P_1^{S(E),S(D)}, P_3^{D,X}, P_k^{E,Y}$
for some $k>1$ and $X,Y \in \D$
are nondegenerate. Again in
this case the dimension of $P^{\D,\D}$ is at least $16$.

\par Thus the dimension of $H$ is at least
$\dim H_0 + \dim P^{\D,\D} + 2 \dim P^{1, \D} +
\dim P^{1,1} \geq \dim H_0 + 20
 + \dim P^{1,1}$ and the statement is proved.
\end{proof}

\begin{example} \label{example: dim 27 no simples dim 4}
Let $H$ be a non-copointed Hopf algebra of dimension $27$,
then $H_0 \ncong \Bbbk \cdot 1 \oplus \M^\ast(2,\Bbbk)^t$
for any $t \geq 1$.
\end{example}

The last example in this section  uses similar
techniques to prove the existence of nontrivial
skew-primitive elements in a Hopf algebra
with nontrivial grouplikes.

\begin{example} Let $H$ be a nonsemisimple nonpointed
and non-copointed Hopf algebra of dimension $5^3$.
Then by \cite{GG} (see Proposition \ref{prop: GG prop 3.15}),
$H$ has no simple subcoalgebra of dimension $4$ fixed by
the antipode.
Moreover, if $H_0 \cong \Bbbk C_5
\oplus \mathcal{M}^\ast(2, \Bbbk)^5$, then
$H$ must have a nontrivial skew-primitive element.

To see this, assume $H$ has no nontrivial skew-primitive element.
By Proposition \ref{prop:biti-dasca}$(i)$,
$P_1^{1,D} \neq 0$ for some
simple subcoalgebra $D$ of dimension $4$.
 Since $S^{4p}$ is the identity by Radford's formula for $S^4$
and $S(D) \neq D$, we must have
that the simple subcoalgebras of dimension $4$ are $D_0:= D$,
$D_1 = S(D), \ldots, D_{4}= S^{4}(D)$.
 Thus $P_1^{1,D}, P_1^{D_1,1},
P_1^{1,D_2}, \ldots, P_1^{1,D_{4}},P_1^{D,1}, \ldots, P_1^{1,D_{4}}$
are all nonzero and of equal dimension.
 By Lemma \ref{lema:fukuda}$(ii)$ and the fact that $G(H) = C_5$,
  $\dim P^{C_5,\mathcal{D}} + \dim P^{\mathcal{D},C_5} \geq
2\cdot 25 \cdot \dim P^{1,D} \geq 100 $ where $\mathcal{D}$
denotes the set of simple $4$-dimensional coalgebras.
By Lemma \ref{lema:fukuda}$(iii)$, $P^{D_i,\mathcal{D}}$ is
nonzero for every $i$ so that  the dimension
of $P^{\mathcal{D},\mathcal{D}}$ is at least $20$. Also
$\dim P^{C_5,C_5} $ must be divisible by $4$ and also by $5$ so
is at least $20$.
 Thus  $  \dim H_0 +
 2\dim P^{C_5,\mathcal{D}} +
\dim P^{\mathcal{D},\mathcal{D}}+
\dim P^{C_5,C_5} \geq 165 $, a contradiction.\end{example}

\section{Hopf algebras of dimension $p^{3}$}
Throughout this section, $H$ will be a Hopf algebra of dimension $p^{3}$.
Since semisimple and nonsemisimple pointed Hopf algebras have
already been classified, we will assume that $H$ is nonsemisimple,
nonpointed and non-copointed.
We denote by $T_{q}$ the Taft Hopf algebra of dimension
$p^{2}$ for $q$ some primitive
$p$-th root of unity. For $y$ a $(1,g)$-primitive in $T_q$
we will write $T_q = \Bbbk \langle g,y \rangle$;
here $gy = qyg$.
We first summarize some results from \cite{GG}.

 \subsection{Known results for dimension $p^3$}
The general classification of Hopf algebras of
dimension $p^3$ was studied by the second
author in \cite{GG} where it was conjectured that a
Hopf algebra of dimension $p^3$ is semisimple
or pointed or copointed.  Since these Hopf algebras
have all been classified, a complete list of isomorphism
types can be given if the conjecture holds. In particular,
it is shown in \cite{GG} that ribbon Hopf algebras
of dimension $p^3$ are group algebras or Frobenius-Lusztig kernels.

\par Also by \cite{GG}, we know that if a Hopf algebra $H$ of
dimension $p^3$ is nonsemisimple, nonpointed
and non-copointed, then $H$ is simple as a Hopf algebra,
meaning that it has no normal proper sub-Hopf algebras. Furthermore
the only possible types are $(p,p)$ and $(p,1)$ so that
 $H$  has grouplikes of order $p$, and the dual
has grouplikes of order $p$ or $1$.  In
particular, $S^{4p} = \id$ by Radford's formula.

\begin{theorem}\cite[Thm. 2.1]{GG}\label{thm: 2.1 GG}
Let $L$ be a finite dimensional Hopf algebra which
fits into an extension $
  A  \overset{i}{\hookrightarrow}  L \overset{\pi}{\twoheadrightarrow} \Bbbk C_p
$, where $A^\ast$ is pointed and $|G(A^\ast)| \leq p$.  Then $L$ is copointed.
\qed
\end{theorem}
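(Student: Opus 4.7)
The plan is to show that every simple $L$-module is $1$-dimensional, which is equivalent to $L^{\ast}$ being pointed, i.e., $L$ copointed. Since $A$ is normal in $L$ with quotient $\Bbbk C_p$, the Nichols-Zoeller theorem gives that $L$ is a free $A$-module of rank $p$. Because $A^{\ast}$ is pointed, every simple $A$-module is $1$-dimensional, given by a character $\chi\in G(A^{\ast})$. For any simple $L$-module $M$ containing an $A$-submodule $N\cong\Bbbk_\chi$, Frobenius reciprocity produces a surjection $L\otimes_A N \twoheadrightarrow M$, hence $\dim M \le \dim(L\otimes_A N) = p$.

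Since $\Bbbk C_p$ is semisimple and cocommutative and $\Bbbk$ is algebraically closed of characteristic zero, the extension is cleft with a convolution-invertible coalgebra section $\gamma\colon\Bbbk C_p\to L$. This realizes $L\cong A\#_\sigma \Bbbk C_p$ as a crossed product, with $C_p$ acting on $A$ by Hopf algebra automorphisms. The induced action on the abelian group $G(A^{\ast})$ is by group automorphisms, and the image of $C_p\to\Aut(G(A^{\ast}))$ has order dividing both $p$ and $|\Aut(G(A^{\ast}))|$. Since $|G(A^{\ast})|\le p$, the group $G(A^{\ast})$ is cyclic with $|\Aut(G(A^{\ast}))|=\varphi(|G(A^{\ast})|)$ coprime to $p$. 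Hence the action of $C_p$ on $G(A^{\ast})$ is trivial, and every $\chi\in G(A^{\ast})$ is $C_p$-fixed.

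Given such a $C_p$-invariant $\chi$, define $\tilde\chi\colon L\to\Bbbk$ by $\tilde\chi(a\#g)=\chi(a)\eta(g)$ for a function $\eta\colon C_p\to\Bbbk^\times$; using the $C_p$-invariance of $\chi$, multiplicativity of $\tilde\chi$ reduces to the equation $\chi(\sigma(g,h))=\eta(g)\eta(h)\eta(gh)^{-1}$. Since $\chi\circ\sigma$ is a $2$-cocycle of $C_p$ with values in $\Bbbk^\times$ and $H^2(C_p,\Bbbk^\times)=0$, such $\eta$ exists, and the set of choices is a torsor over the character group $\widehat{C_p}$, yielding $p$ distinct extensions $\tilde\chi_1,\ldots,\tilde\chi_p$ of $\chi$ to characters of $L$. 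Using the basis $\{\gamma(g)\mid g\in C_p\}$ of $L$ over $A$, the $L$-linear map $L\otimes_A\Bbbk_\chi\to\bigoplus_{i=1}^{p}\Bbbk_{\tilde\chi_i}$ sending $\gamma(g)\otimes 1\mapsto(\tilde\chi_i(\gamma(g)))_i$ has matrix equal to the character table of $C_p$, which is nonsingular by Schur orthogonality. Hence $L\otimes_A\Bbbk_\chi\cong\bigoplus_{i=1}^{p}\Bbbk_{\tilde\chi_i}$ as $L$-modules, and any simple $L$-module over $\chi$ must be one of the $\Bbbk_{\tilde\chi_i}$; so $\dim M=1$.

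The main obstacle is establishing the cleft structure with a coalgebra section (relying on the cocommutativity of $\Bbbk C_p$ together with standard cleaving theorems for Hopf algebra extensions) and phrasing the Hopf-algebraic Clifford theory compatibly, so that the obstruction to extending $\chi$ genuinely lives in $H^2(C_p,\Bbbk^\times)$ and so vanishes over $\Bbbk$ algebraically closed of characteristic zero.
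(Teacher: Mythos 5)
The paper does not actually prove this statement: it is quoted from \cite{GG} and used as a black box, so there is no internal proof to compare against. Your argument is, however, essentially a correct self-contained proof along the natural route: identify copointedness of $L$ with all simple $L$-modules being one-dimensional, realize $L$ as a crossed product $A\#_\sigma\Bbbk C_p$ (every finite-dimensional Hopf algebra extension is cleft; the semisimplicity and cocommutativity of $\Bbbk C_p$ are not really the operative facts here), run Clifford theory, and kill the obstruction to extending characters of $A$ in $H^2(C_p,\Bbbk^\times)=0$. The final decomposition of $L\otimes_A\Bbbk_\chi$ via the nonsingular character-table matrix is correct and does close the argument.

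Two of your intermediate justifications do not hold up as written, though neither affects the architecture. First, cleftness provides a convolution-invertible \emph{comodule} section, not a coalgebra section, and the resulting weak action of $C_p$ on $A$ (conjugation by $\gamma(g)$) is by algebra automorphisms only; there is no reason for it to respect the coalgebra structure of $A$, so the induced action on $G(A^\ast)$ is a priori only a permutation action, not one by group automorphisms. Second, $|G(A^\ast)|\le p$ does not force $G(A^\ast)$ to be cyclic (consider $C_2\times C_2$ with $p\ge 5$), so the appeal to $\varphi(|G(A^\ast)|)$ is unfounded. Both issues vanish if you replace that paragraph by a direct orbit count: the permutation action of $C_p$ on the set $G(A^\ast)$ has orbits of size $1$ or $p$ and fixes $\varepsilon_A$ (conjugation preserves the counit), so an orbit of size $p$ would force $|G(A^\ast)|\ge p+1$; hence every character of $A$ is $C_p$-invariant, which is exactly what the rest of your argument needs. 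You should also record why the permutation of characters is a genuine $C_p$-action even though $\gamma$ is only multiplicative up to the cocycle: the discrepancy $\gamma(g)\gamma(h)=\sigma(g,h)\gamma(gh)$ is inner by an invertible element of $A$, and characters, being algebra maps into the commutative ring $\Bbbk$, are insensitive to inner automorphisms.
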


\begin{proposition}\cite[Cor. 3.14]{GG}\label{prop: GG cor 3.14}
Let $L$ be a Hopf algebra of dimension
$p^3$ and type $(p,p)$ such that $L$ contains a
nontrivial skew-primitive element.  Then $L$
is a bosonization of $\Bbbk C_p$.\qed
\end{proposition}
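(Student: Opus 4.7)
My plan is to apply Radford's projection (biproduct) theorem, according to which $L$ is a bosonization $R \# \Bbbk C_p$ if and only if there exist Hopf algebra morphisms $\iota\colon \Bbbk C_p \hookrightarrow L$ and $\pi\colon L \twoheadrightarrow \Bbbk C_p$ with $\pi\iota = \id$. Since $|G(L)| = p$, the canonical inclusion $\iota\colon \Bbbk G(L) \hookrightarrow L$ supplies one half. For the other, the hypothesis $|G(L^{\ast})| = p$ furnishes a sub-Hopf algebra $\Bbbk G(L^{\ast}) \cong \Bbbk C_p \subseteq L^{\ast}$; dualizing produces a Hopf algebra surjection $\pi\colon L \twoheadrightarrow \Bbbk G(L^{\ast})^{\ast} \cong \Bbbk C_p$. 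The problem reduces to verifying that $\pi\iota$ is an isomorphism, equivalently that $\pi$ is nontrivial on $G(L)$.

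The main step is to rule out the possibility $\pi(g) = 1$ for $g$ a generator of $G(L)$. If $\pi(g) = 1$, then for the hypothesized nontrivial $(1,g)$-skew-primitive $x$, we would have $\com(\pi(x)) = 1 \otimes \pi(x) + \pi(x) \otimes 1$, so $\pi(x)$ is primitive in $\Bbbk C_p$ and hence $0$ in characteristic zero. By the same reasoning applied to every skew-primitive, $\pi$ annihilates the augmentation ideal of the pointed sub-Hopf algebra $K \subseteq L$ generated by $G(L) \cup \cP(L)$. By Nichols--Zoeller, $\dim K$ divides $p^3$, and since $K$ properly contains $\Bbbk G(L)$, we get $\dim K \in \{p^2, p^3\}$. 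The case $K = L$ would make $\pi$ trivial, contradicting surjectivity, so $K$ is a pointed sub-Hopf algebra of dimension $p^2$ with $G(K) \subseteq G(L) = C_p$, forcing $K \cong T_q$ for some primitive $p$-th root of unity $q$.

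To conclude in this remaining case, Nichols--Zoeller gives that $L$ is free of rank $p$ on each side as a $T_q$-module, so both $LT_q^+$ and $T_q^+ L$ have codimension $p$ in $L$. Both are contained in $\ker\pi$, which also has codimension $p$, so all three coincide; in particular $LT_q^+ = T_q^+ L$, so $T_q$ is normal in $L$ and there is an exact sequence $T_q \hookrightarrow L \twoheadrightarrow \Bbbk C_p$. Theorem~\ref{thm: 2.1 GG} applies with $A = T_q$, since $T_q^{\ast} \cong T_{q^{-1}}$ is pointed with $p$ grouplikes, and concludes that $L$ is copointed. Consequently $L^{\ast}$ is a pointed Hopf algebra of dimension $p^3$ with $|G(L^{\ast})| = p$; by the AS/CD/SvO classification $L^{\ast} \cong R \# \Bbbk C_p$, and dualizing (using self-duality of $\Bbbk C_p$) exhibits $L$ itself as a bosonization of $\Bbbk C_p$.

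The most delicate point is the last step: transferring a Radford biproduct from $L^{\ast}$ back to $L$ requires explicit identification of $R^{\ast}$ as a braided Hopf algebra in the Yetter--Drinfeld category over $\Bbbk C_p$, exploiting the self-duality of $\Bbbk C_p$ in characteristic zero. The internal normality argument via codimension counting should go through cleanly, but care is needed to ensure that the resulting biproduct decomposition on $L$ is compatible with the natural choice of $\iota$ and $\pi$ rather than arising only up to a twist.
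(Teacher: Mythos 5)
The paper itself offers no proof of this statement---it is imported verbatim from [GG, Cor.~3.14]---so your argument can only be measured on its own terms. Your overall architecture is sound: take $\iota\colon \Bbbk G(L)\hookrightarrow L$ and the surjection $\pi\colon L\twoheadrightarrow \Bbbk G(L^{\ast})^{\ast}\cong\Bbbk C_p$ dual to $\Bbbk G(L^{\ast})\subseteq L^{\ast}$, apply Radford's biproduct theorem when $\pi\iota$ is nontrivial, and treat the degenerate case separately. The degenerate case is also handled correctly up to its last step: if $\pi(g)=1$ then $\pi$ restricts to $\e$ on the sub-Hopf algebra $K$ generated by $G(L)\cup\cP(L)$; Nichols--Zoeller and the classification in dimension $p^2$ force $K\cong T_q$; and the codimension count (or, more directly, Lemma~\ref{lem:exact-seq-dim} applied to $T_q\subseteq L^{\co\pi}$, both of dimension $p^2$) yields an exact sequence $T_q\hookrightarrow L\twoheadrightarrow\Bbbk C_p$, so Theorem~\ref{thm: 2.1 GG} gives that $L$ is copointed.

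The genuine gap is the concluding inference ``$L^{\ast}$ is pointed of dimension $p^3$ with $|G(L^{\ast})|=p$, hence by the classification $L^{\ast}\cong R\#\Bbbk C_p$.'' That implication is false as stated: the classification of pointed Hopf algebras of dimension $p^3$ with coradical $\Bbbk C_p$ contains, besides the book algebras, the nontrivially linked liftings of type $\mathbf{u}_{q}(\mathfrak{sl}_2)$, and these are \emph{not} bosonizations of $\Bbbk C_p$ (indeed $G(\mathbf{u}_{q}(\mathfrak{sl}_2)^{\ast})$ is trivial: any Hopf algebra map to $\Bbbk C_p$ must kill $x$ and $y$, and the relation $xy-yx=g-g^{-1}$ then forces $g\mapsto 1$, so no Hopf projection onto $\Bbbk C_p$ exists). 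To close the argument you must feed the type hypothesis back in: since $G((L^{\ast})^{\ast})=G(L)\cong C_p$, the pointed Hopf algebra $L^{\ast}$ is again of type $(p,p)$, and inspection of the classification shows that the only pointed Hopf algebras of dimension $p^3$ of type $(p,p)$ with $|G|=p$ are the book algebras $\mathbf{h}(q,m)$, which are honest bosonizations $\toba(V)\#\Bbbk C_p$ and whose duals are again book algebras. With that supplement (together with the care you already flag about dualizing a biproduct over the self-dual $\Bbbk C_p$), the proof is complete.
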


\begin{proposition}\cite[Prop. 3.15]{GG}
\label{prop: GG prop 3.15} Let $L$ be a nonsemisimple Hopf algebra of
dimension $p^3$ and assume that $L$ contains a simple
subcoalgebra of dimension $4$ fixed by the antipode. Then $L$ is
copointed and furthermore, $L$ cannot be of type $(p,p)$.
\qed
\end{proposition}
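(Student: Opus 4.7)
The plan is to apply Natale's theorem (Proposition \ref{prop:natale-stefan}), performing a dimension analysis to force the resulting central extension to collapse, and then dispose of the remaining possibilities with Theorem \ref{thm: 2.1 GG} and Proposition \ref{prop: GG cor 3.14}.

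\textbf{Step 1: Reduce to Natale's setting.} First I would show that $D$ generates $L$ as a Hopf algebra. Let $L'\subseteq L$ denote the sub-Hopf algebra generated by $D$. By Nichols--Zoeller, $\dim L'$ divides $p^3$, and since $\dim L'\geq \dim D=4$ and $p$ is an odd prime, we must have $\dim L'\in\{p^2,p^3\}$. A Hopf algebra of dimension $p^2$ is either one of the group algebras $\Bbbk C_{p^2}$, $\Bbbk(C_p\times C_p)$, or a Taft algebra $T_q$; none of these contains a simple subcoalgebra of dimension $4$ (the first two are cosemisimple with only $1$-dimensional simples, and Taft algebras are pointed). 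Hence $L'=L$.

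\textbf{Step 2: Apply Natale's theorem.} By Proposition \ref{prop:natale-stefan} I obtain a central exact sequence
\[
\Bbbk^{G}\overset{\imath}\hookrightarrow L\overset{\pi}\twoheadrightarrow A
\]
with $G$ a finite group and $A^{*}$ pointed nonsemisimple. Lemma \ref{lem:exact-seq-dim} gives $|G|\cdot\dim A=p^3$. Since $A^{*}$ is nonsemisimple, $\dim A>1$, and since every Hopf algebra of dimension $p$ is $\Bbbk C_p$ (hence semisimple), $\dim A\geq p^2$. Thus $|G|\in\{1,p\}$.

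\textbf{Step 3: Rule out $|G|=p$ and conclude copointedness.} If $|G|=p$, then $\dim A=p^2$ and $A^{*}\cong T_q$. Dualizing the extension yields
\[
T_q\cong A^{*}\hookrightarrow L^{*}\twoheadrightarrow (\Bbbk^{G})^{*}\cong\Bbbk C_p.
\]
Since $T_q^{*}\cong T_q$ is pointed with $|G(T_q)|=p$, Theorem \ref{thm: 2.1 GG} applied to $L^{*}$ forces $L^{*}$ to be copointed, i.e.\ $L=L^{**}$ is pointed. But a pointed Hopf algebra has only $1$-dimensional simple subcoalgebras, contradicting the existence of $D$. Hence $|G|=1$, so $L=A$ is copointed, which proves the first assertion.

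\textbf{Step 4: Rule out type $(p,p)$.} Suppose for contradiction that $L$ is of type $(p,p)$. Then $L^{*}$ is pointed nonsemisimple of dimension $p^3$ with $|G(L^{*})|=p$ and $|G(L^{**})|=|G(L)|=p$, so $L^{*}$ itself is of type $(p,p)$. Since $L^{*}$ is pointed and non-cosemisimple, $L_0^{*}\subsetneq L_1^{*}$, so $L^{*}$ contains a nontrivial skew-primitive; by Proposition \ref{prop: GG cor 3.14}, $L^{*}\cong R\#\Bbbk C_p$ is a bosonization, with $R$ a connected braided Hopf algebra of dimension $p^2$ in $\ydg$ for $G=C_p$ (connectedness of $R$ following from the fact that all grouplikes of $L^{*}$ come from $\Bbbk C_p$). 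Because $R$ is connected, $R^{+}$ is nilpotent, so the Jacobson radical of $R\#\Bbbk C_p$ contains $R^{+}\#\Bbbk C_p$, and every irreducible $L^{*}$-module factors through the semisimple quotient $\Bbbk C_p$ and is one-dimensional. Dually, every simple subcoalgebra of $L$ is one-dimensional, contradicting the existence of $D$. Hence $L$ cannot be of type $(p,p)$.

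\textbf{Main obstacle.} The most delicate point is Step 3, where one must carefully apply Theorem \ref{thm: 2.1 GG} to $L^{*}$ (not to $L$) and verify that the self-duality of $T_q$ together with the bound $|G(T_q)|\leq p$ places us inside its hypotheses. Step 4 is conceptually simple but relies on the standard fact that irreducibles of a connected Radford biproduct over a group algebra are pulled back from the group; this should be invoked explicitly since it converts the bosonization description of $L^{*}$ into the required statement about the coalgebra structure of $L$.
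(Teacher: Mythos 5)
The paper does not actually prove this proposition: it is quoted verbatim from \cite[Prop.\ 3.15]{GG}, so there is no in-paper argument to compare yours against. Judged on its own terms, your Steps 1--3 are correct and establish the first assertion cleanly: $D$ generates $L$ (do note that for $p\geq 5$ you should also explicitly discard $\dim L'=p$, which your parenthetical about group algebras already covers), Natale's theorem gives the central extension, the case $|G|=p$ is killed by dualizing and applying Theorem \ref{thm: 2.1 GG} to $L^*$ with $A=T_q$ (self-dual, $|G(T_q^*)|=p$), and $|G|=1$ forces $L\cong A$ copointed. This is a legitimate derivation of copointedness from the ingredients the paper supplies.

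Step 4, however, has a genuine gap at its pivot: the assertion that ``because $R$ is connected, $R^+$ is nilpotent.'' Connectedness is a coalgebra condition and locality of $R$ is an algebra condition; the implication is \emph{not} formal. Indeed it fails for finite-dimensional Hopf algebras in positive characteristic ($(\Bbbk C_p)^*$ in characteristic $p$ is connected as a coalgebra but is $\Bbbk^p$ as an algebra), so any proof must genuinely use that $R$ is a braided Hopf algebra in $\ydg$ over a field of characteristic zero --- for instance by showing every primitive $x$ has self-braiding a nontrivial root of unity (else $\Bbbk[x]$ is infinite dimensional) and then controlling the whole of $R^+$, which you do not do. Since this claim carries the entire second assertion (``$L$ cannot be of type $(p,p)$''), and you yourself flag it as something that ``should be invoked explicitly'' without supplying a proof or a precise reference, the step is incomplete as written. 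A repair within the paper's toolkit: once $L$ is known to be copointed but not pointed, $L^*$ is a pointed nonsemisimple Hopf algebra of dimension $p^3$ with nonpointed dual; by the classification cited in the introduction only two isomorphism classes have nonpointed duals, namely the Frobenius--Lusztig kernel (whose algebra admits only the counit as a character, so it has type $(p,1)$) and ${\bf r}(q)$ (whose grouplikes have order $p^2$), and neither is of type $(p,p)$; hence $L$ is not of type $(p,p)$. The connectedness of $R$ also deserves more than a parenthesis (it follows from the fact that the coradical of $R\#\Bbbk C_p$ is $R_0\#\Bbbk C_p$ since $\Bbbk C_p$ is cosemisimple), but that point is standard; the nilpotency claim is the real hole.
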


In \cite{GG} the complete classification of the quasi-triangular
Hopf algebras of dimension $27$ was given.
We are able to complete the classification for dimension $27$ in the
general case in
Subsection \ref{subsect: 27}.

\subsection{Some general results for dimension $p^3$}
In this subsection we prove some general results for Hopf algebras
of dimension $p^3$ and grouplikes of
 order $p$, especially those containing a   sub-Hopf algebra isomorphic to $T_{q^\prime}$
for some $q^\prime$
  and/or having a Hopf algebra projection to some
 $T_{q}$, with
$q$ not necessarily equal to $q^\prime$,
 that is, the
Hopf algebra contains a nontrivial skew-primitive element
and/or its dual does.
Since by Proposition \ref{prop:biti-dasca}$(ii)$
 every non-cosemisimple Hopf algebra of dimension $27$
with grouplikes of order $3$ has a nontrivial skew-primitive element, these
 results will lead to the classification for dimension $27$.

\smallbreak The next proposition is interesting itself but
also has a number of useful consequences.

\begin{proposition}\label{prop: pointed}  Let $L$ be a Hopf algebra of dimension $p^3$.
Assume $L$ contains
a   sub-Hopf algebra
$T \cong T_q$ and suppose
that for some $n\geq 1$
and grouplike $g$, $L$ contains an element $y \in P_n^{g,1}$
with $y \notin P_{n-1}^{g,1}$   and
$y \notin T$ such that $\Delta(y) = g \otimes y + y \otimes 1 +
y^\prime$ where $y^\prime \in T \otimes T$. Then $L$ is pointed.
\end{proposition}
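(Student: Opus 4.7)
The plan is to show $L$ is pointed by exhibiting $L$ as an algebra generated by a pointed subcoalgebra of itself.

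\textbf{Step 1 (Generation by size).} Let $B \subseteq L$ be the subalgebra generated by $T$ and $y$. Because $T$ is a sub-Hopf algebra and $\Delta(y) = g \otimes y + y \otimes 1 + y' \in B \otimes B$, the subspace $B$ is a sub-bialgebra, hence a sub-Hopf algebra by the argument of \S\ref{subsec:conv}. By Nichols--Zoeller, $p^{2} = \dim T$ divides $\dim B$; since $y \in B \setminus T$, we have $\dim B > p^{2}$, forcing $\dim B = p^{3}$ and $B = L$.

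\textbf{Step 2 (Pointed subcoalgebra).} The subspace $C := T + \Bbbk y$ is a subcoalgebra of $L$ (as $\Delta(y) \in C \otimes C$) of dimension $p^{2}+1$. We claim $C$ is pointed. Let $D \subseteq C$ be any simple subcoalgebra. If $D \subseteq T$, then $\dim D = 1$ since $T$ is pointed. Otherwise $D \cap T$ is a proper subcoalgebra of the simple coalgebra $D$, so $D \cap T = 0$; then $D$ embeds into the one-dimensional quotient $C / T \cong \Bbbk$, whence $\dim D = 1$ and $D = \Bbbk w$ for some grouplike $w = a + \lambda y$ with $a \in T$, $\lambda \neq 0$. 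Expanding $\Delta(w) = w \otimes w$ and comparing the $y \otimes y$-coefficients, noting that $\Delta(a)$ and $y'$ both lie in $T \otimes T$ and have no $y \otimes y$ component, yields $\lambda^{2} = 0$, a contradiction. Hence every simple subcoalgebra of $C$ has dimension one, and $C$ is pointed.

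\textbf{Step 3 (Conclusion).} By Step 1, $L$ is generated as an algebra by the pointed subcoalgebra $C$ of Step 2. From this, $L$ is pointed.

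\textbf{Main obstacle.} The essential ingredient is the passage in Step 3 from algebra-generation by a pointed subcoalgebra to pointedness of $L$. The concrete way to achieve this in our setting is to produce $t \in T$ such that $z := y - t$ is a genuine $(g,1)$-skew primitive of $L$; this reduces to solving the cocycle equation $\Delta(t) - g \otimes t - t \otimes 1 = y'$ inside the Taft algebra $T = T_{q}$. The hypothesis $y \in P_{n}^{g,1} \setminus P_{n-1}^{g,1}$ forces $y' \in \sum_{i=1}^{n-1} P_{i} \otimes P_{n-i}$ via the defining relation of $P_{n}$, and this restriction, combined with the explicit form of the comultiplication on $T_{q}$, allows an induction on $n$ to construct the required $t$. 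Once $z$ is obtained, $L$ is generated by grouplikes (coming from $T$) and the skew primitives $x \in T$ and $z$, and pointedness then follows from the standard fact that the sub-Hopf algebra of any Hopf algebra generated by its grouplikes and skew primitives is pointed.
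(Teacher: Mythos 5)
Your Steps 1 and 2 are correct and in fact run parallel to the paper's own argument: the paper likewise generates $L$ as an algebra by $T$ and $y$ and exploits that the generating piece is controlled by $T$. The problem is Step 3, which you yourself single out as the essential passage and then justify by the wrong mechanism. The implication ``$L$ is generated as an algebra by a pointed subcoalgebra $\Rightarrow$ $L$ is pointed'' neither needs nor should be routed through producing an honest $(g,1)$-skew-primitive $z=y-t$ with $t\in T$. It is an immediate consequence of the coalgebra-filtration lemma \cite[5.5.1]{Mo}: if $\{A_n\}$ is an exhaustive increasing family of subspaces of $L$ with $\Delta(A_n)\subseteq \sum_i A_i\otimes A_{n-i}$, then $L_0\subseteq A_0$. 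This is exactly what the paper does, taking $A_0=T$, $A_1$ the span of $T\cup\{ayb : a,b\in T\}$ and $A_n=A_1^n$; the hypothesis $\Delta(y)\in g\otimes y+y\otimes 1+T\otimes T$ gives $\Delta(A_1)\subseteq A_0\otimes A_1+A_1\otimes A_0$, and since $A_1$ generates $L$ one concludes $L_0\subseteq T$, whence $L$ is pointed. In your formulation the same lemma says that the coradical of a bialgebra generated by a subcoalgebra $C$ lies in the subalgebra generated by $C_0$; with $C=T+\Bbbk y$ and $C_0\subseteq\Bbbk G(L)$ from your Step 2, the proof is finished. Note that the ``standard fact'' you invoke in your closing sentence (the subalgebra generated by grouplikes and skew-primitives is pointed) is itself proved by this very filtration lemma, so the detour gains nothing.

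The detour is, moreover, a genuine gap and not merely an inefficiency. You claim that the equation $\Delta(t)-g\otimes t-t\otimes 1=y'$ can be solved for $t\in T$ by ``an induction on $n$,'' but no such argument is given, and there is no reason to expect one: this amounts to asking that the Hochschild $2$-cocycle $y'$ of the coalgebra $T_q$ (with coefficients in $\Bbbk$, twisted by $g$ and $1$) be a coboundary already inside $T_q$. The class of $y'$ is certainly a coboundary in $L$ --- it is bounded by $y$ itself --- but the relevant cohomology of the Taft coalgebra need not vanish, and nothing in the hypotheses forces the class to die before passing to $L$. Were your claim correct, every $y$ as in the statement would be congruent modulo $T$ to a genuine skew-primitive, a far stronger conclusion than pointedness and not one the hypotheses deliver. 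So Step 3 must be closed by the filtration lemma, after which your Steps 1 and 2 (or the paper's direct computation with $A_0$ and $A_1$) complete the proof.
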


\begin{proof} Since $T \varsubsetneq L$,  $p\leq |G(L)| < p^{3} $.
If $|G(L)| = p^{2}$, then $L$ is pointed by \cite[Prop. 3.3]{GG}. Assume
then that
$G(L) = G(T) \cong C_{p}$.
Let $A_0 := T$ and
$A_1$ be the vector space spanned by $ T  \cup \{ ayb: a,b \in T  \}$.
Then $A_0A_1 \subseteq A_1$ and
 $A_1 A_0 \subseteq A_1$.
 Let $K$ be the algebra
generated by $A_1$, in other words by $T$ and $y$.
Since the comultiplication is an algebra map, $K$ is a sub-bialgebra of $L$,
and thus a sub-Hopf
algebra. Since $p^2 < \text{dim}(K) \leq p^3$, we have that $K = L$.
Furthermore, $\Delta(A_1) \subseteq
A_0 \otimes A_1 + A_1 \otimes A_0$ so that by \cite[5.5.1]{Mo},
it follows that $A_0 \supseteq L_0$ and so $L$ is pointed.
\end{proof}

Recall that throughout this section $H$ denotes a
nonsemisimple nonpointed non-copointed Hopf algebra
of dimension $p^3$.

\begin{cor}\label{cor: s-p dim p}
Assume $H$ contains a pointed
sub-Hopf algebra $T\cong T_{q}$. Then
all skew-primitive elements of $H$ lie in $T$. In particular, the dimension
of the space of nontrivial skew-primitive elements is $p$. \qed
\end{cor}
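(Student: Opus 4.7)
The plan is to argue by contradiction via Proposition \ref{prop: pointed}. Suppose a skew-primitive $z \in H \setminus T$ exists, say $z \in \cP_{h_1, h_2}(H)$. Since $H$ is nonpointed nonsemisimple of dimension $p^3$, the type restrictions $(p,p)$ or $(p,1)$ recalled at the opening of this subsection give $|\GH| = p$; as $T$ already contributes $p$ grouplikes this forces $\GH = G(T) = \langle g \rangle$, so $h_1 = g^a$ and $h_2 = g^b$. Decompose $z = z' + \lambda(g^a - g^b)$ into its nontrivial and trivial parts. The trivial summand lies in $\Bbbk\GH \subset T$, so $z'$ itself lies outside $T$ and is nonzero, and $z' \in P_1^{g^a, g^b}$.

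Right-multiply by the grouplike $g^{-b} \in T$ and set $w := g^{-b} z'$. A direct computation shows $\Delta(w) = g^c \otimes w + w \otimes 1$ with $c := a - b$, so $w$ is a $(g^c,1)$-primitive; since $z' \notin T$ and all trivial $(g^c,1)$-primitives lie in $\Bbbk(g^c - 1) \subset T$, the nontrivial part of $w$ is nonzero and lies in $P_1^{g^c, 1}\setminus T$. If $c \equiv 0 \pmod p$ this would be a nonzero primitive element of the finite-dimensional Hopf algebra $H$ over a field of characteristic zero, which is impossible; hence $c \not\equiv 0$. Then $w$ (or its nontrivial part) satisfies every hypothesis of Proposition \ref{prop: pointed} with $n = 1$ and grouplike $g^c$: it lies in $P_1^{g^c, 1}\setminus P_0^{g^c, 1}$, outside $T$, and the remainder $y'$ in $\Delta(w) = g^c \otimes w + w \otimes 1 + y'$ may be taken as $0 \in T \otimes T$. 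The proposition then concludes that $H$ is pointed, contradicting the standing assumption of this section.

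Therefore $\cP(H) \subseteq T$. The \emph{in particular} statement follows from the structure of $T \cong T_q$: modulo the trivial skew-primitives, the nontrivial skew-primitives of $T$ form a $p$-dimensional subspace spanned by $y, gy, \ldots, g^{p-1}y$, each $g^j y$ being a nontrivial $(g^{j+1}, g^j)$-primitive. The main subtlety I anticipate is a bookkeeping point: Proposition \ref{prop: pointed} states its hypotheses for a specific grouplike $g$, but inspecting its proof shows the argument is insensitive to the choice of grouplike as long as it lies in $A_0 = T$, and the identity $\GH = G(T)$ ensures this automatically here.
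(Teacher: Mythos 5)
Your argument is correct and is exactly the derivation the paper intends: the corollary carries no written proof because it is meant to follow from Proposition \ref{prop: pointed} applied with $n=1$ and $y'=0$ after translating a putative skew-primitive outside $T$ by a grouplike (which lies in $T$ since $G(H)=G(T)$), precisely as you do, with the ``in particular'' clause read off from the known skew-primitives of $T_q$. One cosmetic caveat: with the paper's convention $\cP_{h,g}(H)=\{x\mid\Delta(x)=g\otimes x+x\otimes h\}$ your element $z'$ satisfies $\Delta(z')=g^{b}\otimes z'+z'\otimes g^{a}$, so to reach the form $\Delta(w)=g^{c}\otimes w+w\otimes 1$ required by Proposition \ref{prop: pointed} you should take $w=g^{-a}z'$ (giving $c=b-a$) rather than $g^{-b}z'$; this is a bookkeeping adjustment that does not affect the validity of the proof.
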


  \begin{prop}\label{pr: divides p-1}
  Suppose that $H$ has grouplikes of order $p$
and $P_1^{g,E} = 0
  = P_1^{E,g}$ for all $g \in G(H)$ and all
simple subcoalgebras $E$ of dimension greater
  than $1$. Then
$P^{g,E} = 0
  = P^{E,g}$ for all $g \in G(H)$ and all simple
subcoalgebras $E$ of dimension greater
  than $1$.

If we assume further
that $H$ contains a pointed sub-Hopf algebra $T\cong T_{q}$, then
$ H^{ C_p,  C_p} = T$ and
if every simple subcoalgebra of dimension
greater than $1$ of $H$ has
  dimension divisible by $d^2 < p^2$, then $d^2$ must divide $p-1$.
  \end{prop}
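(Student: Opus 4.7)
The plan is to prove the three assertions in turn, each using the previous. For the first, I would induct on $n$ to show $P_n^{g,E} = 0$ for every grouplike $g$ and every simple subcoalgebra $E$ of dimension greater than $1$. The case $n = 1$ is the hypothesis and $P_0 = 0$. For $n \geq 2$, if $P_n^{g,E} \neq 0$ then, since the induction gives $P_{n-1}^{g,E} = 0$, the component is nondegenerate; Lemma~\ref{lema:fukuda}$(i)$ then produces a simple $D_1$ with $P_1^{g,D_1}$ and $P_{n-1}^{D_1,E}$ both nondegenerate. The base hypothesis forces $D_1$ to be a grouplike, and the second nondegeneracy then contradicts the inductive hypothesis. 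The case $P^{E,g} = 0$ is symmetric.

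For the second assertion, the inclusion $T \subseteq H^{C_p,C_p}$ is immediate because $T$ is a subcoalgebra whose coradical $\Bbbk C_p$ forces every $H_0$-bicomodule type in $T$ to lie in $C_p \times C_p$. For the reverse, I would argue by contradiction: suppose some $y \in P^{g,h}$ with $g,h \in C_p$ lies outside $T$, and choose such a $y$ minimizing the filtration degree $n$, so that $y \in P_n^{g,h} \setminus P_{n-1}^{g,h}$. Corollary~\ref{cor: s-p dim p} rules out $n = 1$ since $P_1^{g,h}$ consists of skew-primitives, all in $T$, so $n \geq 2$. Writing $\Delta(y) = g \otimes y + y \otimes h + \Delta'(y)$ with $\Delta'(y) \in \sum_{i=1}^{n-1}\sum_\tau P_i^{g,\tau} \otimes P_{n-i}^{\tau,h}$, the first assertion restricts each intermediate $\tau$ to a grouplike in $C_p = G(H)$, and the minimality of $n$ then places each $P_i^{g,\tau}$ and $P_{n-i}^{\tau,h}$ inside $T$; hence $\Delta'(y) \in T \otimes T$. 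Since right multiplication by $h^{-1} \in T$ sends $P_n^{g,h}$ isomorphically onto $P_n^{gh^{-1},1}$ (cf.\ Lemma~\ref{lema:fukuda-deg}$(ii)$), the element $yh^{-1}$ satisfies every hypothesis of Proposition~\ref{prop: pointed}, forcing $H$ to be pointed, a contradiction. The main obstacle here is this comultiplication analysis: landing $\Delta'(y)$ in $T \otimes T$ requires simultaneously invoking the first assertion (to rule out intermediate simple types of dimension greater than $1$) and the minimality of $n$ (to push lower-degree pieces into $T$).

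For the third assertion, refine the decomposition $H = H_0 \oplus P$ by $H_0$-bicomodule type. By the first assertion the only nonzero types are either $(g,h)$ with $g,h \in C_p$ or else $(\tau,\gamma)$ with $\dim D_\tau, \dim D_\gamma > 1$, and the second assertion gives the $C_p$-part dimension $p^2$. Therefore
\begin{equation*}
p^3 \;=\; p^2 \;+\; \sum_\tau d_\tau^2 \;+\; \sum_{\tau,\gamma} \dim P^{\tau,\gamma},
\end{equation*}
where the sums range over simple subcoalgebras of dimension greater than $1$. Since $d^2 \mid d_\tau^2$ implies $d \mid d_\tau$, every $d_\tau^2$ and every $\dim P^{\tau,\gamma}$ (a multiple of $d_\tau d_\gamma$) is divisible by $d^2$. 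Thus $d^2 \mid p^2(p-1)$, and since $d < p$ implies $\gcd(d^2, p^2) = 1$, we conclude $d^2 \mid p-1$.
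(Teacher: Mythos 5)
Your proof is correct and follows essentially the same route as the paper's: induction via Lemma~\ref{lema:fukuda}$(i)$ for the vanishing of $P^{g,E}$, induction on the filtration degree combined with Proposition~\ref{prop: pointed} (and the standing nonpointedness of $H$) for $H^{C_p,C_p}=T$, and the coalgebra decomposition $H \cong T \oplus E \oplus P^{\E,\E}$ giving $d^2 \mid p^2(p-1)$ for the divisibility claim. Your explicit reduction of $P_n^{g,h}$ to $P_n^{gh^{-1},1}$ by right multiplication by $h^{-1}\in T$ is a detail the paper leaves implicit, but it is not a different approach.
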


\begin{proof}
Suppose that $P_k^{g,E} = 0$ for all $k<n$ and
$P_n^{g,E} $ is nonzero.   Then
by Lemma \ref{lema:fukuda}$(i)$, there are simple
subcoalgebras $D_i$, $1 \leq i \leq n-1$ such that
$P_i^{g,D_i}$ and $P_{n-i}^{D_i,E}$ are nondegenerate.
But the first condition implies that the $D_i$ all
have dimension $1$ and the second condition then gives a
contradiction to the induction assumption.

Since clearly $T \subseteq H^{C_p,C_p}$,
it remains to show that $P^{g,h} \subset T$ for all
$g,h \in G(H)$. By Corollary \ref{cor: s-p dim p}, $P_1^{g,h} \subset T$.
Suppose that $P_k^{g,h} \subset T$ for
all $k <n$ and let $0 \neq y \in P_n^{g,h}$ with $y \notin P_{n-1}^{g,h}$.
  Then $\Delta(y) = g \otimes y + y \otimes h + y^\prime$
where $y^\prime \in \sum_{1 \leq i \leq n-1}P_i \otimes P_{n-i}$.
Since, by $(i)$, for
 $g$ any grouplike,  $P_i^{g,D} = 0 = P_{n-i}^{D,g}$ for $1 \leq i < n$
and for $D$ simple of dimension greater than $1$,
then $y^\prime \in P_i^{C_p,C_p} \otimes P_{n-i}^{C_p,C_p} \subset T \otimes T$
by the induction assumption.
By Proposition \ref{prop: pointed}, $y \in T$, which implies that
$P^{g,h} \subset T$ and hence $ H^{ C_p,  C_p} = T$.

Suppose now that $H_0 = \Bbbk C_p \oplus E$
where $E$ is the sum of  simple subcoalgebras of
dimension divisible by $d^2 >1$. Let $\E$
denote this set of simple subcoalgebras.  Then
   as a coalgebra $H \cong H^{ C_p,  C_p} \oplus E \oplus P^{\E,\E}
=T \oplus E \oplus P^{\E,\E} $.
    Thus $\dim H - \dim T = p^3 -p^2 = p^2(p-1)$
must be divisible by $d^2$ and
  since $d<p$, $d^2$ must
   divide $p-1$.
\end{proof}

\begin{remark}\label{rem: m divides p-1}
 Suppose that $G(H) \cong C_p$ and  $H$ contains a pointed
sub-Hopf algebra $T\cong T_q$
 such that   $T = H^{C_p,C_p}$.  If for some $d>1$,
any simple subcoalgebra   of dimension greater than $1$ has
 dimension divisible by $d^2$, then $d$ must divide $p-1$ and if $d$ is even,
then $2d$ divides $p-1$. The argument is the same as
 that above, except that here possibly   $P^{C_p,\E}$ is nonzero.
\end{remark}

\begin{cor}\label{cor: H0 not kCp + p(p-1)sq}
 Suppose that $G(H) \cong C_p$.  Then $\Ho \ncong \Bbbk C_{p}
\oplus \mathcal{M}^\ast(p-1,\Bbbk)^{sp} \oplus E$
as coalgebras where  $E = 0$ or $E = \M^\ast(p,\Bbbk)^t$
with $s, t \geq 1$ .
\end{cor}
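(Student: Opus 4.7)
The plan is to assume the asserted coradical structure and derive a contradiction by combining a dimension count with an $H_0$-bicomodule analysis of $P = \bigoplus_{n \geq 1} P_n$. First, the inequality $\dim H_0 = p + sp(p-1)^2 + tp^2 \leq p^3$ together with the standing assumption that $H$ is nonsemisimple (which eliminates the borderline $(s, t, p) = (2, 0, 3)$, where $\dim H_0 = 27 = \dim H$ would force $H$ cosemisimple) forces $s = 1$ and $t \in \{0, 1\}$. This splits the analysis into Case A ($t = 0$, $\dim P = 2p(p-1)$) and Case B ($t = 1$, $\dim P = p(p-2)$).

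Next, in each case, Proposition \ref{prop:biti-dasca}$(ii)$ with $n = p-1$ and $|G| = p$ shows that the absence of a nontrivial skew-primitive would give $\dim H \geq \dim H_0 + (2p-1)p + (p-1)^2 > p^3$, a contradiction. Hence $H$ contains a Taft sub-Hopf algebra $T \cong T_{q'}$ of dimension $p^2$, and since $G(T) = C_p = G(H)$ we have $T \subseteq H^{C_p, C_p}$, so $\dim H^{C_p, C_p} \geq p^2$. I then decompose $P = P^{C_p, C_p} \oplus P^{C_p, \E} \oplus P^{\E, C_p} \oplus P^{\E, \E}$, where $\E$ is the set of non-grouplike simple subcoalgebras of $H$, and extract from Lemma \ref{lema:fukuda}$(ii)$ (together with the $C_p$-diagonal action on pairs and the antipode) the divisibilities $pd \mid \dim P^{C_p, D}$ for $D$ of dimension $d^2$, $dd' \mid \dim P^{D, D'}$ for $D, D' \in \E$, and $\dim P^{C_p, \E} = \dim P^{\E, C_p}$.

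For Case B, each of the candidate bicomodule dimensions $p(p-1)$, $p^2$, and $(p-1)^2$ strictly exceeds $\dim P = p(p-2)$, so all $\dim P^{X, Y}$ with $(X, Y) \neq (C_p, C_p)$ vanish. Then $\dim P^{C_p, C_p} = p(p-2)$ and $\dim H^{C_p, C_p} = p^2 - p < p^2 \leq \dim H^{C_p, C_p}$, a contradiction.

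For Case A, if $\dim P^{C_p, \E}$ were positive, so would be $\dim P^{\E, C_p}$, and their combined contribution of at least $2p(p-1)$ would exhaust $\dim P$, forcing $\dim P^{C_p, C_p} = 0$ against $T \subseteq H^{C_p, C_p}$. Hence both vanish, and the residual equation $\dim P^{C_p, C_p} + \dim P^{\E, \E} = 2p(p-1)$ subject to $p \mid \dim P^{C_p, C_p}$, $(p-1)^2 \mid \dim P^{\E, \E}$, and $\dim P^{C_p, C_p} \geq p(p-1)$ admits only the solution $\dim P^{\E, \E} = 0$, $\dim P^{C_p, C_p} = 2p(p-1)$ (the alternative $\dim P^{\E, \E} = (p-1)^2$ would give $\dim P^{C_p, C_p} = p^2 - 1$, not a multiple of $p$). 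Since now $P_1^{g, E} = 0$ for every $g \in C_p$ and $E \in \E$, Proposition \ref{pr: divides p-1} yields $H^{C_p, C_p} = T$, hence $\dim P^{C_p, C_p} = p(p-1) \neq 2p(p-1)$; equivalently, Remark \ref{rem: m divides p-1} applied with $d = p-1$ (even) gives the impossible $2(p-1) \mid (p-1)$. The main obstacle is precisely this Case A divisibility bookkeeping: ruling out the intermediate value $\dim P^{\E, \E} = (p-1)^2$ rests on the modest but decisive observation that $p \nmid p^2 - 1$.
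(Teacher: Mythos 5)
Your argument is correct and follows essentially the same route as the paper: Proposition \ref{prop:biti-dasca}$(ii)$ forces a Taft sub-Hopf algebra $T\subseteq H^{C_p,C_p}$, a Fukuda-style count on the $H_0$-bicomodule components of $P$ kills $P^{C_p,\E}$ and $P^{\E,C_p}$, and Proposition \ref{pr: divides p-1} supplies the final contradiction. The only differences are cosmetic: you pin down $s=1$, $t\leq 1$ up front and conclude in Case A from $H^{C_p,C_p}=T$ versus $\dim P^{C_p,C_p}=2p(p-1)$ (the intermediate divisibility bookkeeping is actually dispensable, since the clause $(p-1)^2\mid p-1$ of Proposition \ref{pr: divides p-1} already gives the absurdity the paper uses), while the paper instead overflows $\dim H$ via a nonzero multiple of $p(p-1)^2$ in $P^{\E,\E}$.
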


\begin{proof} Suppose first that $E = 0$ and
$\Ho \cong \Bbbk C_{p}
\oplus \mathcal{M}^\ast(p-1,\Bbbk)^{sp}$.  By
Proposition \ref{prop:biti-dasca}$(ii)$, if $H$ has no nontrivial
skew-primitives then
$ \dim H \geq p + sp(p-1)^2 + 2p(p-1) + (p-1)^2 + p = p^3 + (p-1)^2 +p
> p^3$, a contradiction. Thus,
 $H$  must contain a pointed
sub-Hopf algebra of dimension $p^2$.

Next suppose that $P_1^{g,D  } \neq 0 $ for some
$g \in G(H)$, $D \in \D$, where $\D$ is the set of  simple coalgebras of
dimension $(p-1)^2$. Then
   by Lemma
\ref{lema:fukuda}$(ii)$, $ \dim P^{C_p, \D}
 = \dim P^{\D, C_p} \geq p(p-1)$.
By
Lemma \ref{lema:fukuda}$(iii)$,
  $\dim P^{\D,\D}$
 is a nonzero multiple of  $(p-1)^2$. In fact, since $p$
divides  $\dim H_0$, $\dim P^{C_p,C_p}$ and
$\dim P^{C_p, \D}$, then $\dim P^{\D,\D}$ is a nonzero multiple of $p(p-1)^2$.
 Thus \begin{displaymath} \dim H \geq p^2  + sp(p-1)^2
+2p(p-1) + p(p-1)^2   \geq 2p^3 -p^2 > p^3,
\end{displaymath}
a contradiction.  Hence $P^{C_p,\D} = 0$ and by Proposition \ref{pr: divides p-1},
$(p-1)^2 $ must divide $p-1$, an impossibility.  Thus $\Ho \ncong \Bbbk C_{p}
\oplus \mathcal{M}^\ast(p-1,\Bbbk)^{sp}$.
\par Now suppose that $E  \cong \mathcal{M}^\ast(p,\Bbbk)^t$.
 The usual argument
shows that $H$ has  a sub-Hopf algebra
isomorphic to $T_q$.  But then $p^3
\geq p^2 + sp(p-1)^2 + tp^2 \geq p^3 + (t-1)p^2+p$, a contradiction.
 \end{proof}

\begin{remark}\label{rem: has sps}
 Suppose that $H$ has coradical  $  \Bbbk C_{p}
 \oplus \mathcal{M}^\ast(p ,\Bbbk)^n$.
Then by Proposition \ref{prop:biti-dasca}$(ii)$,
 if $H$ has no nontrivial skew-primitive, the dimension of $H$ is at least
 $(n+3)p^2 + 2p $. Thus if $(n+3)p^2 + 2p > p^3$,
then $H$ contains a pointed sub-Hopf
 algebra isomorphic to $T_q$. In particular, if $H_0 \cong  \Bbbk C_{p}
 \oplus \mathcal{M}^\ast(p ,\Bbbk)^n$ with $n \geq p-3$, then $H$
has a skew-primitive element.
\end{remark}

\begin{cor} \label{cor: H0 not kCp + p(p-2)sq}
Assume $p\geq 7$, then for $s \geq 1$,
  $\Ho \ncong \Bbbk C_{p}
\oplus \mathcal{M}^\ast(p-2,\Bbbk)^{sp}  $   as coalgebras.

\end{cor}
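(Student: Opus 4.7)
The plan is to adapt the proof of Corollary \ref{cor: H0 not kCp + p(p-1)sq} to the matrix dimension $(p-2)^2$, splitting into two cases according to whether $H$ admits a nontrivial skew-primitive. Assume for contradiction that $H_0 \cong \Bbbk C_p \oplus \mathcal{M}^\ast(p-2,\Bbbk)^{sp}$ with $s \geq 1$, and let $\D$ denote the set of simple subcoalgebras of dimension $(p-2)^2$. First I note that for $p \geq 7$ the bound $\dim H_0 \leq p^3$ already forces $s = 1$, since $s(p-2)^2 \leq p^2 - 1$ and $48/25 < 2$; so effectively I only need to handle $s = 1$, though the estimates below also rule out any larger $s$.

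If $H$ has no nontrivial skew-primitive, I invoke Proposition \ref{prop:biti-dasca}$(i)$ to produce, for each grouplike $g$, a simple $C \in \D$ with $P_1^{C,g}$ nondegenerate, a simple $D \in \D$ with $P_k^{C,D}$ nondegenerate for some $k > 1$, and some $P_m^{g,h}$ nondegenerate for $m > 1$ and grouplike $h$, using Lemma \ref{lema:fukuda}$(ii)$ to symmetrize on the left. Divisibility of each isotypic component by the appropriate power of $p-2$ and by $|G(H)| = p$, together with $\gcd(p, p-2) = 1$, then forces $\dim P^{C_p,\D} = \dim P^{\D,C_p} \geq p(p-2)$, $\dim P^{\D,\D} \geq p(p-2)^2$, and $\dim P^{C_p,C_p} \geq p$. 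Summing yields $\dim H \geq \dim H_0 + p + 2p(p-2) + p(p-2)^2 = 2p(p^2 - 3p + 3)$, which exceeds $p^3$ since $p^2 - 6p + 6 > 0$ for $p \geq 5$, a contradiction; hence $H$ contains a sub-Hopf algebra $T \cong T_q$, and then $\dim P^{C_p, C_p} \geq \dim T - p = p(p-1)$.

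With $T$ in hand, I split again according to whether every $P_1^{g,D}$ and $P_1^{D,g}$ vanishes for $g \in C_p$ and $D \in \D$. If so, Proposition \ref{pr: divides p-1} forces $(p-2)^2 \mid (p-1)$, impossible for $p \geq 4$ since $(p-2)^2 > p-1$. Otherwise Lemma \ref{lema:fukuda}$(ii)$ yields $\dim P^{C_p,\D} = \dim P^{\D,C_p} \geq p(p-2)$, and assuming for the moment that $P^{\D,\D} \neq 0$ (so $\dim P^{\D,\D} \geq p(p-2)^2$ by divisibility), I obtain
\begin{displaymath}
\dim H \geq \dim H_0 + p(p-1) + 2p(p-2) + p(p-2)^2 = 2p^3 - 5p^2 + 4p,
\end{displaymath}
which exceeds $p^3$ for $p \geq 5$ because $p(p-1)(p-4) > 0$, the desired final contradiction.

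The main obstacle is verifying the standing claim that $P^{\D,\D} \neq 0$ in this last subcase. My plan is the following iteration: start from a nondegenerate $P_1^{g_0, D_0}$ with $g_0 \in C_p$ and $D_0 \in \D$, and apply the right-fixed variant of Lemma \ref{lema:fukuda}$(iii)$ (derivable from the stated left-fixed version via the antipode-symmetry of Lemma \ref{lema:fukuda}$(ii)$) to produce a simple $E_1$ and a level $\ell_1 \geq 2$ with $P_{\ell_1}^{E_1, D_0}$ nondegenerate. If $E_1 \in \D$ the claim is already proved; otherwise $E_1$ is grouplike, so $\dim E_1 = 1 \neq (p-2)^2 = \dim D_0$ and the hypothesis of Lemma \ref{lema:fukuda}$(iii)$ is still satisfied, giving a simple $E_2$ and $\ell_2 > \ell_1$. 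Iterating this produces a strictly increasing sequence of levels $\ell_1 < \ell_2 < \cdots$ which must terminate because the coradical filtration of $H$ has finite length, and termination is only possible when some $E_k \in \D$, whence $P^{\D, \D} \neq 0$ as required.
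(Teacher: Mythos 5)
Your proposal is correct and follows essentially the same route as the paper: rule out the no-skew-primitive case by a dimension count to get $T_q\subseteq H$, then show $P^{C_p,\D}\neq 0$ leads to the same bound $2p^3-5p^2+4p>p^3$, and otherwise invoke Proposition \ref{pr: divides p-1} to force $(p-2)^2\mid p-1$. Your iteration of Lemma \ref{lema:fukuda}$(iii)$ to justify $P^{\D,\D}\neq 0$ (and the observation that $s=1$ is automatic) simply makes explicit details the paper leaves implicit.
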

\begin{proof} We assume $s=1$; the proof for $s > 1$ is the same.
 First we show that $H$ must
have a skew-primitive element. Suppose not,
then by Proposition \ref{prop:biti-dasca}$(i)$,
$P^{C_p, \E} \neq 0$ and $P^{\E,\E} \neq 0$
  where $\E$ is the set of simple subcoalgebras
 of dimension $p-2$. By the usual dimension
arguments $\dim P^{\E,\E}$ is divisible by $p$ so that
 $ \dim H_0 + 2 \dim P^{C_p, \E} + \dim P^{\E,\E} +
\dim P^{C_p,C_p} = p + p(p-2)^2 + 2(p)(p-2) + p(p-2)^2
 = 2p^3 + -6p^2 + 5p = p^3 + [p^6(p-6) + 5p] > p^3$,
which is impossible. Thus $H$ has a sub-Hopf algebra
$T \cong T_q$ for some $q$.

Now a similar argument shows that $P^{C_p,\E} = 0$.
If not, then the dimension of $H$ is
 $p^2 + 2p(p-2)^2 + 2p(p-2) = p^3 + [p^2(p-5) + 4p] > p^3$ ,
a contradiction.
Thus we may apply Proposition \ref{pr: divides p-1} to obtain the result.
\end{proof}

\begin{cor}
Let $p \geq 19$ be such that $p-1$ is not divisible by $4$. Then
$H_0 \ncong \Bbbk C_p \oplus
\mathcal{M}^\ast(p-5,\Bbbk)^p \oplus \mathcal{M}^\ast(p-3, \Bbbk)^p$.
\end{cor}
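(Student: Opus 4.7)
The simplest route is a direct dimension count. Assume for contradiction that $H_0\cong \Bbbk C_p\oplus \mathcal{M}^\ast(p-5,\Bbbk)^p\oplus \mathcal{M}^\ast(p-3,\Bbbk)^p$. Then $\dim H_0 = p\bigl(1+(p-5)^2+(p-3)^2\bigr) = 2p^3-16p^2+35p = p^3 + p(p^2-16p+35)$. Since the roots of $p^2-16p+35$ are $8\pm\sqrt{29}$, the bracketed factor is strictly positive for $p\ge 14$, so for $p\ge 19$ one has $\dim H_0 > p^3 = \dim H$, contradicting $H_0\subseteq H$. This would complete the proof and, notably, would not use the hypothesis that $4\nmid p-1$.

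If one wishes to parallel the structural proof of Corollary \ref{cor: H0 not kCp + p(p-2)sq} and genuinely use the divisibility hypothesis, the plan is as follows. First show $H$ contains a pointed sub-Hopf algebra $T\cong T_q$: if not, Proposition \ref{prop:biti-dasca}(ii) with $n=p-5$ and $|G|=p$ gives $\dim H\ge \dim H_0+(2p-9)p+(p-5)^2$, and for $p\ge 19$ this again exceeds $p^3$. Then verify $T=H^{C_p,C_p}$ by showing $P^{C_p,\E}=0$, where $\E$ denotes the set of simple subcoalgebras of dimension $>1$: if $P_1^{g,D}\ne 0$ for some $g\in C_p$ and $D\in\E$, then Lemma \ref{lema:fukuda}(ii)--(iii) together with the $G(H)$-orbit analysis produce enough nondegenerate subspaces in $P^{C_p,\E}$, $P^{\E,C_p}$ and $P^{\E,\E}$ to overshoot $p^3$ yet again.

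At this point Remark \ref{rem: m divides p-1} applies, because every $D\in\E$ has dimension divisible by $d^2=4$ (since $p-5$ and $p-3$ are both even for odd $p$). The remark forces $d=2\mid p-1$ and, because $d$ is even, $2d=4\mid p-1$, contradicting the assumption $4\nmid p-1$. The main obstacle in this longer argument is the intermediate dimension accounting for $P^{C_p,\E}=0$: two distinct simple coalgebra sizes and their $G(H)$-orbits on both sides must be tracked through the Fukuda nondegeneracy cascade. Nevertheless, as the one-line dimension argument shows, this entire apparatus can be bypassed under the hypotheses of the statement.
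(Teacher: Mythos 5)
Your one-line dimension count is correct and decisive: $\dim H_0 = p\bigl(1+(p-5)^2+(p-3)^2\bigr)=2p^3-16p^2+35p$, which exceeds $p^3$ exactly when $p^2-16p+35>0$, i.e.\ for every $p\geq 14$; since $H_0\subseteq H$ and $\dim H=p^3$, such a coradical cannot even fit inside $H$, and the hypothesis $4\nmid p-1$ is never used. This is a genuinely different route from the paper's. The paper argues structurally: it first invokes Proposition \ref{prop:biti-dasca}$(ii)$ to force a Taft sub-Hopf algebra $T_q\varsubsetneq H$ (via the inequality $p^3\geq \dim H_0+(2(p-5)+1)p+(p-5)^2=2p^3-13p^2+16p+25$), then rules out $P_1^{g,E}\neq 0$ for simple $E$ of dimension $>1$ by the counting argument of Corollary \ref{cor: H0 not kCp + p(p-1)sq}, and finally applies Proposition \ref{pr: divides p-1} with $d=2$ (both $p-5$ and $p-3$ are even since $p$ is odd) to conclude $4\mid p-1$, contradicting the hypothesis --- essentially the scaffolding you reconstruct in your second and third paragraphs, except that the paper closes with Proposition \ref{pr: divides p-1} rather than Remark \ref{rem: m divides p-1} (both yield $4\mid p-1$, so this is immaterial). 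What your elementary count buys is the observation that the corollary as printed is vacuous: the paper's own first displayed inequality already starts from a coradical of dimension larger than $p^3$, so the entire divisibility apparatus is redundant. This strongly suggests a misprint in the statement (most likely in the multiplicities or matrix sizes), since the authors clearly intended the hypothesis $4\nmid p-1$ to do real work; your structural second argument is the one that would survive such a correction, provided the corrected coradical still has all its simple subcoalgebras of dimension $>1$ of even matrix size and total dimension below $p^3$.
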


\begin{proof}
By Proposition \ref{prop:biti-dasca}$(ii)$, if $H$ has no
skew-primitives, then \begin{displaymath} p^3
\geq p + p(p-5)^2 + p(p-3)^2 + (2p-10 + 1)p + (p-5)^2
= 2p^3 -13p^2 +16p + 25
\end{displaymath}
which is impossible if $p \geq 19$. Thus $T_q \varsubsetneq H$ for some $q$.
  If $P^{g,E}_1$ for some
$E$ simple of dimension at least $(p-5)^2$, then
the same counting argument as in
Corollary \ref{cor: H0 not kCp + p(p-1)sq}
gives a contradiction. The statement then follows
from Proposition \ref{pr: divides p-1}.
\end{proof}

The next propositions give some information about the
coradical of  Hopf algebras of type $(p,p)$
such that the dual contains a Taft Hopf algebra.

\begin{prop}\label{prop:p3-type-p-p}
Let $H$ be of type $(p,p)$ with $G(H) = \langle c \rangle$
and suppose there is a Hopf
algebra projection
$\pi: H \twoheadrightarrow T_q$.
Then $\pi(c) \neq 1$.
If, also, $H$  contains a sub-Hopf algebra
$T_{q^\prime} = \Bbbk\langle c,x \rangle $,
then $\pi(x) = 0$
and $H^{co\pi}$ has basis $\{ 1, x, \ldots, x^{p-1} \}$.
\end{prop}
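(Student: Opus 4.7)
For $\pi(c)\neq 1$: I argue by contradiction. If $\pi(c)=1$ then $c\in H^{\co\pi}$, so $\Bbbk\langle c\rangle\subseteq H^{\co\pi}$, and both spaces have dimension $p$ by Lemma \ref{lem:exact-seq-dim}, hence coincide. Since $\Bbbk\langle c\rangle$ is a sub-Hopf algebra, Lemma \ref{lem:exact-seq-dim} yields the exact sequence $\Bbbk C_p\hookrightarrow H\twoheadrightarrow T_q$. Dualizing gives $T_q^{\ast}\hookrightarrow H^{\ast}\twoheadrightarrow \Bbbk C_p$, and since $T_q=(T_q^{\ast})^{\ast}$ is pointed with $|G(T_q)|=p$, Theorem \ref{thm: 2.1 GG} (applied to $L=H^{\ast}$, $A=T_q^{\ast}$) forces $H^{\ast}$ copointed, equivalently $H$ pointed---contradicting the standing hypotheses.

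For $\pi(x)=0$: write $\pi(c)=g^j$ with $1\le j\le p-1$. A direct computation shows that the $(g^j,1)$-skew-primitives of $T_q$ are $\Bbbk(g^j-1)$ when $j\neq 1$ and $\Bbbk(g-1)\oplus \Bbbk y$ when $j=1$, so $\pi(x)=\alpha y + \beta(g^j-1)$ with $\alpha=0$ unless $j=1$. Applying $\pi$ to $cx=q'xc$ and using $g^jy=q^jyg^j$ yields $\beta(1-q')=0$ and $\alpha(q^j-q')=0$, whence $\beta=0$ (as $q'\neq 1$). Either $\alpha=0$ (so $\pi(x)=0$), or one is left with the edge case $j=1$, $q=q'$, $\pi(x)=\alpha y$ with $\alpha\neq 0$. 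In this remaining case $\pi|_{T_{q'}}\colon T_{q'}\to T_q$ is a Hopf algebra isomorphism, $\sigma:=(\pi|_{T_{q'}})^{-1}$ is a Hopf algebra section of $\pi$, and Radford's biproduct theorem gives $H\cong R\# T_q$ with $R:=H^{\co\pi}$ a braided Hopf algebra of dimension $p$ in ${}^{T_q}_{T_q}\mathcal{YD}$. The Yetter--Drinfeld compatibility applied to $\Delta_R$ forces every braided grouplike of $R$ to have trivial coaction and hence to embed as a genuine grouplike of $H$ via $r\mapsto r\# 1$; so if $R$ has any nontrivial braided grouplike then $|G(H)|\ge p^2$, while if $R$ is connected then $H_0=1\#(T_q)_0=\Bbbk C_p$ and $H$ is pointed. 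Both alternatives contradict the hypotheses on $H$, forcing $\pi(x)=0$.

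For the basis of $H^{\co\pi}$: the $q'$-binomial identity
\begin{equation*}
\Delta(x^i)=\sum_{k=0}^{i}\binom{i}{k}_{q'}c^{i-k}x^k\otimes x^{i-k}
\end{equation*}
combined with $\pi(x^{i-k})=0$ for $i>k$ gives $(\id\otimes\pi)\Delta(x^i)=x^i\otimes 1$, so $x^i\in H^{\co\pi}$ for $0\le i\le p-1$. These $p$ linearly independent elements (part of the standard basis of $T_{q'}$) then exhaust the $p$-dimensional space $H^{\co\pi}$ provided by Lemma \ref{lem:exact-seq-dim}. The main obstacle is the edge case of the second part: the algebraic relations of $T_{q'}$ alone do not force $\pi(x)=0$, and the contradiction must be extracted from the global coalgebra structure of $H$ via the biproduct decomposition $H\cong R\# T_q$ together with the constraints $|G(H)|=p$ and $H$ nonpointed.
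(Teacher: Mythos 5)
Your argument tracks the paper's proof almost step for step. For $\pi(c)\neq 1$ you build the exact sequence $\Bbbk C_p\hookrightarrow H\twoheadrightarrow T_q$ from Lemma \ref{lem:exact-seq-dim}, dualize, and apply Theorem \ref{thm: 2.1 GG} to $H^\ast$; this is exactly the paper's route. For $\pi(x)=0$ the paper likewise reduces to the observation that $\pi(x)\neq 0$ forces $T_q\cong T_{q'}$, so that $H$ becomes a bosonization $R\#T_q$; your computation of $\mathcal{P}_{1,g^j}(T_q)$ and of the constraints $\beta(1-q')=0$, $\alpha(q^j-q')=0$ merely fills in the words ``necessarily $T_q\cong T_{q'}$''. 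Your $q'$-binomial verification that $\{1,x,\dots,x^{p-1}\}$ is a basis of $H^{\co\pi}$ is a welcome addition, since the paper leaves that claim implicit.

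The one place you genuinely diverge is the last step of the second part, and there the argument has a gap. The paper disposes of the bosonization case by citing \cite[Thm.~8.8]{AS2}, which yields that such a biproduct of dimension $p^3$ is pointed. You replace this citation with a direct Yetter--Drinfeld argument resting on the alternative ``either $R$ has a nontrivial braided grouplike or $R$ is connected'' --- but this is not a dichotomy. Since $\dim R=p$, for $p\geq 5$ the coradical $R_0$ could a priori be $\Bbbk 1\oplus C$ with $C$ simple of dimension $4$ or more, so that $R$ is neither connected nor has a nontrivial grouplike, and your argument is silent in that case. (For $p=3$, the only case used later in the paper, the dichotomy does hold for dimension reasons, but the proposition is stated for all odd $p$.) A secondary weak point: the assertion that the Yetter--Drinfeld compatibility forces every braided grouplike of $R$ to have trivial $T_q$-coaction is stated without proof; it is straightforward when the coaction of the grouplike is homogeneous but requires an argument in general. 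Either supply these missing steps or, as the paper does, invoke \cite[Thm.~8.8]{AS2} at this point.
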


\pf
If $\pi(c) = 1$, then $\Bbbk \langle c \rangle \cong \Bbbk C_{p}
\subseteq H^{\co \pi}$. Since
$\dim H^{\co \pi} = p$, by Lemma \ref{lem:exact-seq-dim}
the sequence of Hopf algebras
$ \Bbbk C_{p} \hookrightarrow H \twoheadrightarrow T_{q} $
is exact, which implies by
Theorem \ref{thm: 2.1 GG} that $H$ is pointed.

If $H$ has a skew-primitive element $x$ and
  $\pi(x) \neq 0$, then necessarily $T_{q} \cong T_{q'}$ and hence $H$
is a bosonization of $T_{q}$. This implies that it is
pointed by \cite[Thm. 8.8]{AS2}.
\epf

\begin{prop}\label{prop: corad not p copies of D}
Let $H$ be of type $(p,p)$ with $G(H) = \langle c \rangle $
and suppose there is a Hopf
algebra projection
$\pi: H \twoheadrightarrow T_q$.
Assume further that $H_0 \cong \Bbbk \langle c \rangle
\oplus  \mathcal{M}^\ast(2, \Bbbk)^{tp} \oplus E$ where
  $t\geq 1$ and   $E$
is either $0$ or the sum of
simple coalgebras of dimension strictly greater than $4$.   Then
\begin{enumerate}
 \item[$(i)$] The simple subcoalgebras of dimension $4$
are nontrivially permuted by $L_c$ ($R_c$),  left (right)
multiplication by $c$ and also by the antipode.
 \item[$(ii)$] The number of simple subcoalgebras of dimension
$4$ must be strictly greater than $p$.
 \item[$(iii)$] If $t=2$ then the simple subcoalgebras of dimension $4$
are stable under $S^4_H$.
\item[$(iv)$] If some simple subcoalgebra is fixed by $S^4_H$,
then there is a simple subcoalgebra $D$ of dimension $4$
with multiplicative matrix $\bf{e}$ such that $e_{22},
e_{21} $ are in ${^{co\pi}H}$ and $c^{-1}e_{11}, c^{-1}e_{21}$
are in $H^{co\pi}$.
\end{enumerate}
\end{prop}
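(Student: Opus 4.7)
My plan is to treat the four parts in order, relying on three common observations: first, any coalgebra map sends the coradical to the coradical, so $\pi(H_0) \subseteq (T_q)_0 = \Bbbk C_p$; second, by Proposition~\ref{prop:p3-type-p-p} we have $\pi(c) \neq 1$, hence $\langle \pi(c)\rangle = G(T_q) \cong C_p$ acts transitively on its own grouplikes by left multiplication; third, for any simple subcoalgebra $D$ of dimension $4$ with multiplicative matrix $\mathbf{e}$, expanding $\pi(e_{ij}) = \sum_{k=0}^{p-1}\alpha_{ij}^{(k)}\pi(c)^k$ and using that $\pi(\mathbf{e})$ is matrix-like and counital translates into the matrices $\alpha^{(k)} = (\alpha_{ij}^{(k)}) \in M_2(\Bbbk)$ being pairwise orthogonal idempotents summing to $I_2$; in particular at most two of them are nonzero and $\dim \pi(D) \leq 2$. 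This third observation will drive the contradictions throughout.

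For part (i), the statement for $S$ is immediate from Proposition~\ref{prop: GG prop 3.15} since $H$ is non-copointed. For $L_c$, I would argue by contradiction: if $cD = D$ for some simple $4$-dimensional $D$, then $\pi(c) \pi(D) = \pi(D)$, and writing $\pi(D) = \Bbbk G_0$ for a set $G_0 \subseteq G(T_q)$, this forces $\pi(c) G_0 = G_0$; the transitive action of $\langle\pi(c)\rangle$ then gives $G_0 = \emptyset$ (impossible by counitality) or $G_0 = G(T_q)$, so $\dim \pi(D) = p \geq 3$, contradicting the bound $\dim \pi(D) \leq 2$. The argument for $R_c$ is symmetric.

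For parts (ii) and (iii), I would exploit the orbit structure forced by (i). In (ii), $t = 1$ yields exactly $p$ simple $4$-dimensional subcoalgebras forming a single $L_c$-orbit $\{c^iD_0\}$; the sub-Hopf algebra $B$ generated by $c$ and $D_0$ then has dimension $\geq 5p$, so by Nichols--Zoeller $B = H$ (the option $\dim B = p^2$ is excluded since dimension-$p^2$ Hopf algebras are pointed or cosemisimple, neither having a $4$-dimensional simple subcoalgebra). Combined with the tight control of $\pi(D_0)$ from the third observation, this should force $H^{\co\pi}$ to be $S$-stable; via Lemma~\ref{lem:exact-seq-dim} the sequence $H^{\co\pi}\hookrightarrow H\twoheadrightarrow T_q$ becomes exact, and Theorem~\ref{thm: 2.1 GG} then gives that $H$ is copointed, contradicting the hypothesis. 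For (iii), $t = 2$ produces two $L_c$-orbits of size $p$; since $S^4$ is a Hopf algebra automorphism with $S^4(c) = c$ that commutes with $L_c$ and has orbits on the set of simple $4$-dimensional subcoalgebras of size $1$ or $p$ (from $S^{4p} = \id$), and since $S^4$ fixes every grouplike of $T_q$ and so acts trivially on each $\pi(D)$, the idempotent rigidity of the third observation should rule out both nontrivial cyclic shifts within an $L_c$-orbit and swapping of the two $L_c$-orbits, forcing $S^4(D) = D$ for every simple $4$-dimensional $D$.

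For part (iv), fix $D_0$ with $S^4(D_0) = D_0$; then $\dim \pi(D_0) \in \{1,2\}$. In the case $\dim \pi(D_0) = 1$ with $\pi(e_{11}) = \pi(e_{22}) = \tau$ and $\pi(e_{12}) = \pi(e_{21}) = 0$, choosing $\gamma \in G(H)$ with $\pi(\gamma) = \tau$ yields $\gamma^{-1}D_0 \subseteq H^{\co\pi}$, so by dimension $p \geq 5$. For $\dim \pi(D_0) = 2$, write $\pi(D_0) = \Bbbk\tau_1 \oplus \Bbbk\tau_2$; applying Proposition~\ref{prop:gen-stefan-vay}(ii) to the anti-automorphism $S^2|_{D_0}$ (of order dividing $2p$) together with the intertwining $\pi \circ S = S \circ \pi$, I expect to deduce that $\tau_1\tau_2^{-1} = \pi(c)^{\pm 1}$. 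Replacing $D_0$ by a suitable translate $D := c^kD_0$ then gives $\pi(D) = \Bbbk\cdot 1 \oplus \Bbbk\pi(c)$, and the idempotent decomposition of the third observation furnishes a multiplicative matrix $\mathbf{e}$ of $D$ with $\pi(e_{11}) = \pi(c)$, $\pi(e_{22}) = 1$, $\pi(e_{12}) = \pi(e_{21}) = 0$. Direct computation of $(\id \otimes \pi)\Delta$ and $(\pi \otimes \id)\Delta$ on $e_{22}, e_{21}, c^{-1}e_{11}, c^{-1}e_{21}$ then yields the required coinvariances. The main obstacle is precisely the step $\tau_1\tau_2^{-1} = \pi(c)^{\pm 1}$ in (iv), which demands a careful combination of Proposition~\ref{prop:gen-stefan-vay}(ii) applied to $S^2|_{D_0}$ with the compatibility of $S^2$ and $\pi$.
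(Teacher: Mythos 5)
Your argument rests on a false premise: a coalgebra map does \emph{not} send the coradical into the coradical. What is true is that the coradical of the \emph{image} is contained in the image of the coradical (so for a surjection $f$ one gets $D_0\subseteq f(C_0)$, the opposite of what you need); the image of a simple subcoalgebra is a quotient coalgebra of it, and quotients of cosemisimple coalgebras need not be cosemisimple — dually, subalgebras of $M_2(\Bbbk)$ such as the upper-triangular matrices are not semisimple. Theorem \ref{thm:2x2-matrix}, which you never invoke, describes exactly this phenomenon: the image of $\coM$ can be $3$-dimensional, spanned by two grouplikes and a nontrivial skew-primitive. In the present situation this is what actually happens: any simple $4$-dimensional subcoalgebra $D$ generates $H$ as an algebra (the subalgebra it generates is a sub-bialgebra, hence a sub-Hopf algebra, hence all of $H$), so $\pi(D)$ must generate $T_q$ as an algebra; grouplikes alone generate only $\Bbbk G(T_q)$, so Theorem \ref{thm:2x2-matrix} forces $\dim\pi(D)=3$ with $\pi(D)=\langle g^iy\rangle$ containing the nontrivial skew-primitive $g^iy$. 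This flatly contradicts your ``third observation'' that $\pi(D)$ is spanned by grouplikes with $\dim\pi(D)\le 2$, and that observation is the engine of all four of your arguments.

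Concretely: in $(i)$ only the antipode statement survives (it does follow from Proposition \ref{prop: GG prop 3.15}); the set $G_0$ does not exist, and the paper instead gets the nontrivial permutations from $\pi(c^jD)=\langle g^{i+j}y\rangle$ and $S_{T_q}(y)=-g^{-1}y$. In $(ii)$ the intended argument is a short parity count: for $t=1$ the $p$ (an odd number of) simple $4$-dimensional subcoalgebras are each fixed by $S^2$ but permuted nontrivially by $S$, so one of them is fixed by $S$, contradicting Proposition \ref{prop: GG prop 3.15} — your route through Nichols--Zoeller and Theorem \ref{thm: 2.1 GG} is both unnecessary and unsubstantiated. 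In $(iii)$ your claim that $S^4$ ``acts trivially on each $\pi(D)$'' is false, since $S^4(g^iy)=q^{-2}g^iy$ with $q^{-2}\neq1$; the correct argument is simply that the two subcoalgebras lying over a given $\langle g^iy\rangle$ are either each fixed or swapped by $S^2$, hence fixed by $S^4$. In $(iv)$ the dichotomy $\dim\pi(D_0)\in\{1,2\}$ is wrong; the actual proof diagonalizes $S^4$ on $D$ via Theorem \ref{thm:stefan} against the eigenspace decomposition of $\langle y\rangle$ under $S^4$ (eigenvalues $1$ on $\{1,g\}$ and $q^{\mp2}$ on $y$, resp.\ its transpose), obtaining $\pi(e_{21})=0$, $\pi(e_{22})=1$, $\pi(e_{11})=g$, $\pi(e_{12})=y$, from which the coinvariance claims are immediate. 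The proposal cannot be repaired locally; its central premise must be replaced by the analysis of $\pi(D)$ via Theorem \ref{thm:2x2-matrix}.
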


\begin{proof}$(i)$
For any simple subcoalgebra $D$ of $H$, the algebra generated
 by $D$ is a bialgebra, thus a sub-Hopf algebra, and hence all of $H$.
Therefore $\pi(D)$, the image
of $D$ in $T_q$ must generate $T_q$ as an algebra.
Suppose $T_q = \Bbbk\langle  g,y \rangle$ with
$\Delta(y) = g \otimes y + y \otimes 1$.

If $\dim D = 4$, by Theorem \ref{thm:2x2-matrix},
$\pi(D)$ must have dimension $3$ and have basis $\{
g^iy, g^i, g^{i+1} \}$ for some $i \geq 0$.
Denote this subcoalgebra of $T_q$ by $\langle g^iy \rangle$.
By Proposition \ref{prop:p3-type-p-p},
we may assume that $\pi(c) = g$ and thus $\pi(c^jD)
= \pi(Dc^j) =  \langle g^{i+j} y \rangle$.
Thus $L_c$ and $R_c$ nontrivially permute the simple subcoalgebras of
dimension $4$. Also  $S_{T_q}(y) = -g^{-1}y$, so that
 $S_{T_q}(\langle g^iy \rangle ) = \langle g^{-1}yg^{-i}
\rangle = \langle g^{-i-1}y \rangle$ and
$S$ also properly permutes the simple subcoalgebras of dimension $4$.

\par $(ii)$ Suppose that $t=1$ so that for each $i$,
there is exactly one $4$-dimensional simple
subcoalgebra which maps by $\pi$ to $\langle g^iy \rangle$.
Since $\langle g^i y \rangle$ is fixed by $S_{T_q}^2$ then the
simple subcoalgebras of dimension $4$ are fixed by $S_H^2$
but permuted nontrivially by $S_H$.
 Since there is an odd number of  simple subcoalgebras
of dimension $4$, then one of these must be fixed by
$S_H$, a contradiction
by Proposition \ref{prop: GG prop 3.15}.

\par $(iii)$ If $t=2$, then for each $i$ there are exactly two
$4$-dimensional simple subcoalgebras
which map by $\pi$ to any $\langle g^iy \rangle$.
If these are not stable under $S^2_H$, then they are
exchanged by $S^2_H$ and thus stable by $S^4_H$.

\par $(iv)$ Suppose $D$ is a simple $4$-dimensional
subcoalgebra stable under $S_H^4$.  By $(i)$, we may
assume  that   $\pi(D) = \langle y \rangle$.
By Radford's formula for $S^4$,
since $H$ and $H^\ast$ have grouplikes of order
$p$, $S^{4p}_{H}=\id_{H}$. Also
 $S_{T_{q}}$ has order $2p$.
Thus $ \ord S_H^4=p$ and also $\ord S_{T_q}^4=p$.
Since $gy = qyg$, $S^4(y) = q^{-2}y$.
Thus the subcoalgebra
$\langle y \rangle$ of $T_q$ is the
direct sum of the eigenspace for $1$ with basis $\{1,g \}$
and the eigenspace for $q^{-2}$ with basis $y$.
Now apply Theorem \ref{thm:stefan}
to obtain a multiplicative matrix $\bf{e}$ for $D$ such
that $e_{11}, e_{22}$ are a basis for the eigenspace for $1$
for $S_H^4$ and $e_{12}$ is an eigenvector for $q^{-2}$.
Then $e_{21}$ is an eigenvector for $q^{2}$ and thus
$\pi(e_{21}) = 0$. Then $\pi(e_{ii})$ is grouplike in
$\langle y \rangle$ and $\pi(e_{12})$ is $(\pi(e_{22}), \pi(e_{11}))$-
primitive.  Thus $\pi(e_{22}) = 1, \pi(e_{11}) = g, \pi(e_{12}) =
\alpha y$ where $\alpha$ is a scalar, and by rescaling if necessary
we can assume that $\pi(e_{12}) = y$.  It is now immediate that $e_{22},
e_{21} $ are in ${^{co\pi}H}$ and $c^{-1}e_{11}, c^{-1}e_{21}$
are in $H^{co\pi}$.
\end{proof}

In the next proposition, we assume that both
$H$ and $H^\ast$ have nontrivial skew-primitive elements and
we can then show that $H$ cannot contain a
$4$-dimensional simple subcoalgebra.

\begin{prop}\label{prop: no simple 4 dim}
Suppose $H$ contains a   sub-Hopf algebra
 $T \cong T_{q^\prime}  $ and suppose
also that there is a Hopf algebra projection
$\pi: H \rightarrow T_q = \Bbbk \langle g,y \rangle$.
Then $H$ cannot contain a simple subcoalgebra of dimension $4$.
\end{prop}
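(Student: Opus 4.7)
The plan is to assume such a $D$ exists and use $\pi$ to force a matrix coefficient of $D$ to lie in the pointed subalgebra $T$, contradicting simplicity of $D$. First, the subalgebra generated by $D$ is a sub-Hopf algebra of dimension greater than $4$, which by Nichols--Zoeller and the fact that every Hopf algebra of dimension $p^{2}$ is pointed must equal $H$; hence $\pi(D)$ generates $T_{q}$ as an algebra. This rules out $\dim\pi(D)\leq 2$, and $\dim\pi(D)=4$ is impossible because then $\pi|_{D}$ would be a coalgebra isomorphism and $\pi(D)$ would be a $4$-dimensional simple subcoalgebra of the pointed $T_{q}$. So $\dim\pi(D)=3$; by Theorem~\ref{thm:2x2-matrix} and the description of the $3$-dimensional subcoalgebras of $T_{q}$, $\pi(D)=\langle g^{i}y\rangle$ for some $i$, and replacing $D$ by $c^{-i}D$ (see Proposition~\ref{prop: corad not p copies of D}$(i)$) we may assume $\pi(D)=\langle y\rangle=\Bbbk\{1,g,y\}$.

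Next I will pin down a multiplicative matrix on $D$ adapted to $\pi$. The kernel of $\pi|_{D}$ is a one-dimensional coideal of $D\cong\coM$, and every such coideal equals $\Bbbk e_{21}$ in some multiplicative matrix: the one-dimensional coideals of $\coM$ are parameterized by lines in the defining $2$-dimensional comodule, on which $\GL_{2}$ acts transitively. Choose such a matrix $\{e_{ij}\}$. Applying $\pi\otimes\pi$ to the standard comultiplications of the $e_{ij}$ shows that $\pi(e_{11}),\pi(e_{22})$ are grouplike in $\pi(D)$ and $\pi(e_{12})$ is $(\pi(e_{22}),\pi(e_{11}))$-primitive. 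Since the grouplikes of $\pi(D)$ are $\{1,g\}$ and $T_{q}$ has no nontrivial $(g,1)$-primitives---the alternative $\pi(e_{11})=1$, $\pi(e_{22})=g$ would force $\pi(D)\subseteq\Bbbk\{1,g\}$, contradicting $\dim\pi(D)=3$---we must have $\pi(e_{11})=g$, $\pi(e_{22})=1$, and $\pi(e_{12})=\alpha y+\beta(g-1)$ with $\alpha\neq 0$. An upper-triangular conjugation of the multiplicative matrix absorbs $\beta$, and a subsequent diagonal rescaling normalizes $\alpha$, giving $\pi(e_{12})=y$.

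The direct computations of $(\id\otimes\pi)\Delta$ and $(\pi\otimes\id)\Delta$ on each $e_{ij}$---the same ones that appear in the proof of Proposition~\ref{prop: corad not p copies of D}$(iv)$---now yield
\[
e_{22},\ e_{21}\in{}^{co\pi}H \qquad\text{and}\qquad c^{-1}e_{11},\ c^{-1}e_{21}\in H^{co\pi}.
\]
These identities use only the values of $\pi$ on $\{e_{ij}\}$, not any $S^{4}$-stability of $D$. By Proposition~\ref{prop:p3-type-p-p}, $H^{co\pi}=\Bbbk\{1,x,\ldots,x^{p-1}\}\subseteq T$, so $c^{-1}e_{11}\in T$; multiplying by $c\in T$ gives $e_{11}\in T$. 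Since $T\cap D$ is a subcoalgebra of the simple coalgebra $D$ (the intersection of two subcoalgebras is a subcoalgebra), it equals $0$ or $D$, and pointedness of $T$ together with $\dim D=4$ rules out $D\subseteq T$. Hence $T\cap D=0$ and $e_{11}=0$, contradicting that $\{e_{ij}\}$ is a basis of $D$. The main obstacle is the second paragraph: producing a multiplicative matrix for $D$ on which $\pi$ has the prescribed form, since the $S^{4}$-fixity that gave a clean diagonalization in Proposition~\ref{prop: corad not p copies of D}$(iv)$ is unavailable here.
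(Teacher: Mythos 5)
Your proposal is correct and follows essentially the same route as the paper: both identify $\pi(D)$ as a $3$-dimensional subcoalgebra $\langle g^i y\rangle$ of $T_q$, exploit the one-dimensional coideal $\ker\pi|_D$ to land a nonzero element of a grouplike translate of $D$ inside $H^{\co\pi}$, and then invoke Proposition~\ref{prop:p3-type-p-p} (so that this element lies in the pointed subalgebra $T$, which cannot meet the simple $4$-dimensional coalgebra nontrivially). The paper works directly with the kernel element $z$ and $\Delta(z)=a\otimes z+z\otimes b$ rather than constructing your fully adapted multiplicative matrix, but this is only a difference in bookkeeping, not in the underlying argument.
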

\begin{proof} Suppose $D \cong \M^\ast(2, \Bbbk)$ is a subcoalgebra of $H$.
 Then,
 as in the proof of Proposition \ref{prop: corad not p copies of D},
$\pi(D)$ generates $T_q$
so that   $\pi(D)= \langle g^iy \rangle $
has basis $\{ g^i, g^iy, g^{i+1} \}$ for some
$0 \leq i \leq p-1$.

\par Then $\ker \pi_{|_D} = \Bbbk z$ for some $z \in D$
and is a coideal in $D$.  Since $\pi(z) = 0$, then
$\varepsilon(z) = 0 $ too.  Thus
$\Delta(z) = a \otimes z + z \otimes b$
where $a,b \in H$ with $\varepsilon(a) = \varepsilon(b) = 1$.
Then $(\Delta \otimes \id_D) \circ \Delta (z)
= (\id_D \otimes \Delta) \circ \Delta (z)$ implies that
\begin{displaymath}
\Delta(a) \otimes z + \Delta(z) \otimes b   =
a \otimes \Delta(z) + z \otimes \Delta(b).
\end{displaymath}
Applying $\id_D \otimes \pi \otimes \pi$
and $\pi \otimes \pi \otimes \id_D$ to the above, we obtain that
\begin{displaymath}
z \otimes \pi(b) \otimes \pi(b) =
z \otimes (\pi \otimes \pi) \Delta (b);
\hspace{2mm} (\pi \otimes \pi) \Delta(a)
 \otimes z = \pi(a) \otimes \pi(a) \otimes z.
\end{displaymath}
Thus $\pi(a), \pi(b)$ are grouplike in $\pi(D)$.
If $\pi(b) = g^j$, since by Proposition \ref{prop:p3-type-p-p}
there is some $c \in G(H)$ with $\pi(c) = g^{-j}$, then
\begin{displaymath}
(\id_D \otimes \pi) \circ \Delta ( c z) = cz \otimes \pi(c b) = cz \otimes 1,
\end{displaymath}
and $cz \in H^{co \pi}$, which
is a contradiction by Proposition \ref{prop:p3-type-p-p}.
\end{proof}

%

In the next example  we give
another proof of the
 impossibility of coradical $\Bbbk C_3 \oplus
\mathcal{M}^\ast(2,\Bbbk)^3$ for $H$
 of dimension $27$
   since the argument uses the dimension
of projective covers of simple $H$-comodules
 and illustrates  different techniques for these problems.

\begin{example}\label{lem:3-4to3}
Assume $\dim H =27$ and $H$ is of type $(3,3)$. Then
$\Ho \ncong \Bbbk C_{3} \oplus \coM^{3}$  as coalgebras.

To see this, suppose $\Ho \cong \Bbbk C_{3} \oplus \coM^{3}$
as coalgebras.
Then the isomorphism classes of
the simple  $ H^{*} $-modules are the
 three one-dimensional modules  $\Bbbk_{\eps}, \Bbbk_{g},
\Bbbk_{g^{2}}$ and three two-dimensional modules $ V_{1},
V_{2} $ and $ V_{3} $. If we denote by $ P(V) $ the projective cover
of a simple module $ V $, then we have that
$ 27 = 3 \dim P(\Bbbk_{\eps}) + \sum_{i=1}^{3}2\dim P(V_{i}).
$
Since by \cite[Proof of Prop. 2.1]{etinofgelaki2},
$\dim P(\Bbbk_{g^{j}}) \geq \dim P(\Bbbk_{\eps}) $ for $j=1,2$,
and $P(\Bbbk_{\eps})\otimes \Bbbk_{g^{j}}$ is projective,
it follows that $P(\Bbbk_{\eps})\otimes \Bbbk_{g^{j}}
\cong P(\Bbbk_{g^{j}})$ for $j=1,2$. Analogously,
$P(V_{1})\otimes \Bbbk_{g^{j-1}}
\cong P(V_{j})$ for $j=1,2$. Indeed,
as the simple modules $V_{j}$ are not stable
by right tensoring with $ \Bbbk_{g} $, since otherwise
$3$ would divide $\dim V_{i} = 2$ by \cite[Prop. 2.5]{etinofgelaki2},
we may assume that $V_{j} \cong V_{1}\ot \Bbbk_{g^{j-1}}$
for $j=1,2$. Then there is a surjection from the projective module
$P(V_{1})\otimes \Bbbk_{g^{j-1}}$ to
$ V_{1}\ot \Bbbk_{g^{j-1}} \cong V_{j}$ which
implies that it also has a projection to
$ P(V_{j}) $ and in particular, $ \dim P(V_{1}) \geq
\dim P(V_{j}) $. Since the action given by right tensoring
with $ \Bbbk_{g} $ is transitive, we have that
$  \dim P(V_{j}) \geq
\dim P(V_{1}) $   for $j=1,2$ which implies that
$P(V_{1})\otimes \Bbbk_{g^{j-1}}
\cong P(V_{j})$ for $j=1,2$. Consequently we have
that $ 27 = 3\dim P(\Bbbk_{\eps}) + 6\dim P(V_{1})$.

\par We claim now that $ \ord S = 6 $. Indeed,
by Radford's formula for the antipode
we know that $ \ord S \vert 12$. Assume that $ \ord S \neq 6 $ and
let $ D $ be a simple $4$-dimensional subcoalgebra of $ H $.
Then $ D $ generates $ H $ as an algebra since the subalgebra
generated by $ D $ is a sub-Hopf algebra
and there is no nonsemisimple
and nonpointed Hopf algebra of dimension $p$ or $ p^{2} $ by \cite{Z}
and \cite{Ng}.
Since the coradical of $ H $ contains $3$
simple subcoalgebras of dimension $ 4 $, we have that
$S^{6}(D) = D$ and by Theorem \ref{thm:stefan},
there exists a comatrix basis $ (e_{ij})_{1\leq i,j\leq 2} $
such that $ S^{6}(e_{ij}) = (-1)^{i-j}e_{ij} $. On the
other hand, since $ H^{*} $ contains a sub-Hopf
algebra isomorphic to a Taft Hopf algebra $ T_{q} $, there
exists a Hopf algebra surjection $\pi: H\to T_{q}$. But then,
$ (-1)^{i-j}\pi(e_{ij}) = \pi(S^{6}(e_{ij})) =
S^{6}(\pi(e_{ij}))= \pi(e_{ij})$, which implies
that $\pi(e_{ij}) = 0$ for $i\neq j$ and consequently
$\pi(D) \subseteq \Bbbk G(T_{q})$. This is impossible
since $D$ generates $ H $ as an algebra and therefore
$\pi(D)$ generates $ T_{q} $ as an algebra too.

As $ \ord S = 6 $, we have that $3$ divides
$\dim P(\Bbbk_{\eps})$ by \cite[Cor. 1.5]{ChNg}.
Since $ 27 = 3\dim P(\Bbbk_{\eps}) + 6\dim P(V_{1})$
and $ \dim P(V_{1}) \geq 2 $, the only possibility
is that $ \dim P(\Bbbk_{\eps}) =3 $ and $ \dim P(V_{1}) =3 $.
But this cannot occur, for if $ P(V)\neq V $ for a simple
module $ V $, then $ \dim P(V) \geq 2\dim V $.
Hence, there is no Hopf algebra of dimension
27 whose coradical contains $ 3 $ simple
subcoalgebras of dimension $ 4 $.\end{example}

Next we consider the number of simple subcoalgebras of dimension $9$
in a Hopf algebra $H$ of dimension
$p^3$ where both $H$ and $H^\ast$
have a nontrivial skew-primitive element.

\begin{prop}\label{prop: multiple of psquared}  Let $p>3$.
Assume $H$ has a pointed sub-Hopf algebra
$T \cong T_{q^\prime}$ and suppose
there is a Hopf algebra projection
$\pi: H \twoheadrightarrow T_q = \Bbbk \langle g,y \rangle$.  Then
the number of simple subcoalgebras of $H$ of
dimension $9$ is a multiple of $p^2$.
\end{prop}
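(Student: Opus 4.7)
The strategy is to analyze the action of $G(H)\times G(H)\cong C_p\times C_p$ on the set $\mathcal{D}$ of simple $9$-dimensional subcoalgebras of $H$, given by $(c^i,c^j)\cdot D = c^iDc^j$, and show that every orbit has size $p^2$; partitioning $\mathcal{D}$ into orbits will then yield $p^2\mid|\mathcal{D}|$.

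By the earlier propositions in this subsection together with the fact that every proper sub-Hopf algebra of $H$ has dimension at most $p^2$, $H$ is of type $(p,p)$ with $G(H)=\langle c\rangle$, and $\pi(c)$ can be normalized to $g$. For any $D\in\mathcal{D}$, the subalgebra generated by $D$ is a sub-Hopf algebra of dimension $>p^2$, hence equal to $H$; consequently $\pi(D)$ generates $T_q$ as an algebra. This generation hypothesis severely restricts which subcoalgebras of $T_q$ can occur as $\pi(D)$. Since orbit sizes under $C_p\times C_p$ are $1$, $p$, or $p^2$, it suffices to exclude stabilizers of positive order.

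To eliminate the coordinate cases, I would argue as follows. If $cD=D$ then $g\pi(D)=\pi(D)$; using that $T_q$ is free as a left $\Bbbk\langle g\rangle$-module with basis the $y$-layers and examining the comultiplication on the basis $\{\epsilon_a y^j\}$ (where $\epsilon_a$ are the idempotents of $\Bbbk\langle g\rangle$), the left-$g$-invariant subcoalgebras of $T_q$ are precisely the coradical filtration terms $T_0\subset T_1\subset\cdots\subset T_q$. For $p\geq 5$, only $T_0=\Bbbk\langle g\rangle$ has dimension $\leq 9$ and it does not generate $T_q$; the symmetric argument rules out $Dc=D$. For a mixed stabilizer generated by $(c,c^b)$ with $b\in\{1,\ldots,p-2\}$, the map $\phi(x)=gxg^b$ sends $g^iy^j\mapsto q^{-jb}g^{i+b+1}y^j$, and since $\gcd(b+1,p)=1$ the same classification of $\phi$-invariant subcoalgebras as coradical-filtration terms produces the same dimensional contradiction.

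The main obstacle is the remaining conjugation case $cDc^{-1}=D$, since conjugation by $g$ acts on $T_q$ as the scaling $g^iy^j\mapsto q^jg^iy^j$, and many subcoalgebras of $T_q$ of dimension $\leq 9$ are conjugation-invariant (for instance, the $6$-dimensional coefficient coalgebra of a uniserial length-$3$ $T_q$-comodule). Here I would apply Proposition \ref{prop:gen-stefan-vay}$(i)$ to $\ad(c)|_D\in\Aut(D)$, obtaining a multiplicative matrix $(e_{ij})_{1\leq i,j\leq 3}$ of $D$ with $\ad(c)(e_{ij})=\omega_i\omega_j^{-1}e_{ij}$ and $\omega_i\omega_j^{-1}\in\mu_p$. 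If $\ad(c)|_D$ is trivial then $\pi(D)$ centralizes $g$ in $T_q$, hence lies in $\Bbbk\langle g\rangle$, contradicting generation. Otherwise, writing $\omega_i=q^{b_i}$ with $b_1=0$ and using that $\pi$ intertwines $\ad(c)$ with $\ad(g)$ (which scales $g^ay^k$ by $q^k$), each $\pi(e_{ij})$ lies in the $y^{b_i-b_j}$-layer of $T_q$; in particular $\pi(e_{ii})\in\Bbbk\langle g\rangle$. Comparing bi-gradings in the identity $\Delta\pi(e_{ii})=\sum_\ell\pi(e_{i\ell})\otimes\pi(e_{\ell i})$ forces $\pi(e_{ij})\otimes\pi(e_{ji})=0$ for each pair with $b_i\neq b_j$, and I would then propagate these vanishings through the remaining coassociativity relations, together with the requirement that $\pi(D)$ generate $T_q$, to reach the desired contradiction. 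Making this bi-graded cancellation tight enough to eliminate every conjugation-invariant candidate $D$ is the technical heart of the argument.
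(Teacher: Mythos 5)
Your global architecture matches the paper's: both arguments reduce to showing that the $C_p\times C_p$-orbit $\{c^iDc^j\}$ of a simple $9$-dimensional subcoalgebra $D$ has exactly $p^2$ elements, and both dispose of a stabilizing pair $(c^i,c^j)$ with $i+j\not\equiv 0\bmod p$ by the same dimension count inside $T_q$: an invariant subcoalgebra of $T_q$ that generates $T_q$ as an algebra and is not contained in $\Bbbk\langle g\rangle$ has dimension at least $2p>9$. Your setup for the remaining case via Proposition \ref{prop:gen-stefan-vay} (a multiplicative matrix of eigenvectors for $\adl(c)$ with eigenvalues $\omega_i\omega_j^{-1}$, intertwined by $\pi$ with the $y$-layer grading of $T_q$) is also exactly the paper's.

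However, the case you yourself flag as ``the technical heart,'' namely $cDc^{-1}=D$, is precisely where your proposal stops being a proof, and the missing ingredient is not more bi-graded bookkeeping but Proposition \ref{prop:p3-type-p-p}: since $H$ also contains $T\cong T_{q'}=\Bbbk\langle c,x\rangle$, the coinvariant spaces $H^{\co\pi}$ and ${}^{\co\pi}H$ are $p$-dimensional and contained in $T$, so no nonzero element of the simple $9$-dimensional coalgebra $D$, nor any translate $c^me_{ij}$, can be a coinvariant. The paper's contradiction in every subcase has the form: certain off-diagonal $\pi(e_{ij})$ vanish, hence some $\pi(e_{ii})$ is grouplike and $e_{ii}$ (or $c^me_{ii}$) lies in $H^{\co\pi}$ or ${}^{\co\pi}H$ --- impossible. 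Generation of $T_q$ by $\pi(D)$ alone does not close the argument: when the first row of the eigenvalue matrix is $(1,1,q)$, the images $\pi(e_{13}),\pi(e_{23})$ can hit the $y$-layer so that $\pi(D)$ does generate $T_q$, and the contradiction comes only from $\pi(e_{31})=\pi(e_{32})=0$ forcing $e_{33}$ to be a coinvariant. To get those vanishings started you also need the auxiliary lemma that $\pi(e_{ij})=0$ whenever its eigenvalue is $q^n$ with $3\le n\le p-1$ (again a $\dim\pi(D)\le 9$ count in $T_q$), which your sketch omits. Finally, your claim that the bigrading forces $\pi(e_{ij})\otimes\pi(e_{ji})=0$ termwise is not quite right when two indices share the same $\omega$: one only obtains that the sum over each layer vanishes, e.g.\ $\pi(e_{12})\otimes\pi(e_{21})+\pi(e_{13})\otimes\pi(e_{31})=0$, which is the form the paper actually exploits in its final case analysis.
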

\begin{proof}
If $H$ does not contain a simple subcoalgebra of dimension $9$ there
is nothing to prove. Assume then that $H$ contains a simple
subcoalgebra $D \cong \M^\ast(3, \Bbbk)$.
 As usual, by Proposition \ref{prop:p3-type-p-p}, there is a grouplike element
$c \in H$ such that $\pi(c) = g$.  We show that  the set $\D$ of
subcoalgebras $c^iDc^j$, $0 \leq i,j \leq p-1$, must have $p^2$
distinct elements. Since if $E$ is another simple subcoalgebra of
dimension $9$ such that $E \notin \D$, then for $\E$ the set of
coalgebras $c^iEc^j$, $\D \cap \E = \emptyset$, this will prove the
statement.
\par
 Suppose  that   two coalgebras in the  set
$\{ c^i D c^j | 0 \leq i,j \leq p-1\}$ are equal.
Thus $D = c^i D c^j$ for some $i,j$, not both $0$.
Let $\phi:
D \rightarrow D$ denote the coalgebra automorphism
of order $p$ defined by $\phi(d) = c^idc^j$
and then $\pi \circ \phi(d) = g^i \pi(d) g^{j}$.
First we show that $i+j \equiv 0\mod p$.

Suppose not. For the coalgebra automorphism  $  \phi^\prime$   of
  $T_q$  given
by $w \mapsto g^i w g^j$, let $T_q^{[\ell]}$
be the eigenspace for $q^\ell$, $0\leq \ell <p$.
Then a basis for $T_q^{[\ell]}$ is
$\{ (\sum_{t=0}^{p-1} q^{-t(kj + \ell)}g^{t(i+j)})y^k|\ 0 \leq k <p   \}$.
Since $\pi(D)$ generates $T_q$,
then $\pi(D) \nsubseteqq \Bbbk \langle g \rangle$ and thus some
element $(\sum_{t=0}^{p-1} q^{-t(kj + \ell)}g^{t(i+j)})y^k \in \pi(D)$
with $k >0$ lies in $\pi(D)$.
But then $g^m y^k $ and $g^m$ lie in $\pi(D)$ for all
$0 \leq  m \leq p-1$ so that the dimension
of $\pi(D)$ is at least $2p$. But since $p>3$, $2p>9$ so this is impossible and thus
$i+j \equiv 0\mod p$. Then
$\phi = \adl{c^i}$ and $D$ is also stable under $\adl{c}$.
Thus we assume that $i=1$.

Now let $A := (a_{ij})$ be the $3 \times 3$
matrix of eigenvalues for the eigenvectors $e_{ij}$ in the
multiplicative matrix $\bf{e}$ for $\adl{c}$ on $D$ so that
in the notation of Proposition \ref{prop:gen-stefan-vay},
$a_{ij} = \omega_i \omega_j^{-1}$.
 In $T_q$, a basis of eigenvectors for $\adl{g}$
for the eigenvalue $q^m$ is $\{g^ny^m | 0 \leq n \leq p-1  \}$.
Again, we denote this eigenspace by $T_q^{[m]}$.
If every entry in the first row of $A$ is $1$,
then $\omega_1 = \omega_2 = \omega_3$.
Since $T_q^{[0]} = \Bbbk \langle g \rangle$ and $\pi(D)$
generates $T_q$ we cannot have that every entry of $A$ is $1$
and so this is impossible.  Suppose that exactly two of the
entries in the first row of $A$ equal $1$, say $a_{11} = a_{12} = 1$.
Then $\omega_1 = \omega_2$ and so the first two rows of $A$
are equal and the first two columns of $A$ are equal.
Since $\pi(D)$ generates $T_q$ as an algebra, some $a_{ij}$
must be $q$. Suppose that $a_{13} = a_{23} = q$
and then $a_{31} = a_{32}=q^{-1}$.
(The argument will be the same if we swap $q$ and $q^{-1}$.)
\smallbreak
Next note that if $a_{ij}=q^n$ with $n \geq 3$ then $\pi(e_{ij}) =0$.
For if $\pi(e_{ij}) \neq 0$ then $\pi(e_{ij})
= \psi(g)y^n$ where $\psi(g) \in \Bbbk \langle g \rangle$ so that
$g^my^n \in \pi(D)$ for some $0\leq m<p$.
Since $\pi$ is a coalgebra map this implies that
$g^my^i \in \pi(D)$ for $0 \leq i \leq n
$ so that $\dim \pi(D) \geq n+1 \geq 4$.
Also $g^{m}y \in \pi(D)$ so that $g^{m+1}y^{n-1} \in \pi(D)$
and thus $g^{m+1}y^i \in \pi(D)$ for $0 \leq i \leq n-1$.
Similarly $g^{m+2}y^{n-2}, g^{m+2}y^{n-3}
\in \pi(D)$ so $\dim \pi(D) \geq 2n + 3 \geq 9$, a contradiction.

\par Now assuming that $a_{11} = a_{12} = 1$ and $a_{13} =
a_{23} = q$ implies that $a_{31} = a_{32} = q^{-1}$
so that $\pi(e_{31}) = \pi(e_{32}) = 0$. Then $\pi(e_{33})$
is grouplike and $\pi \otimes \id \circ \Delta(e_{33})
= \pi (e_{33}) \otimes e_{33}$.  Thus $c^m e_{33}
\in\ ^{co\pi}H$ for some $m$, contradicting Proposition \ref{prop:p3-type-p-p}.

\par Therefore exactly one entry in the first row of
$A$, namely $a_{11}$, equals $1$.  Thus $\pi(e_{11})$
is grouplike in $\pi(D)$ and $\pi(e_{12} ) \otimes \pi(e_{21}) + \pi(e_{13})
\otimes \pi(e_{31}) = 0$.
If  $\pi(e_{12}), \pi(e_{13})$ are both $0$ (both nonzero)
 then $e_{11} \in\ ^{co\pi}H$ ($H^{co\pi}$ respectively), and again
Proposition \ref{prop:p3-type-p-p} provides a contradiction.
Suppose $\pi(e_{12}) = 0 $ and $\pi(e_{13}) \neq 0$ so that
$\pi(e_{31}) = 0$. But $\pi(e_{12}) = 0 $ implies that
$\pi(e_{13}) \otimes \pi(e_{32}) = 0$ so that $\pi(e_{32}) = 0$.
Then $\pi(e_{22}) $ is grouplike and $e_{22} \in H^{co\pi}$, contradiction.

Thus the set $\D$ contains $p^2$ distinct
coalgebras of dimension $9$ and the proof is finished.
\end{proof}

\begin{cor}\label{cor: no dim 3 squared}
If $p=5$  or $7$ in
Proposition \ref{prop: multiple of psquared}
then $H$ has no simple subcoalgebras
of dimension $9$.
\end{cor}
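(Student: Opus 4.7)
The plan is to combine Proposition \ref{prop: multiple of psquared} with a straightforward dimension count. By that proposition, the number of simple subcoalgebras of $H$ of dimension $9$ is a multiple of $p^2$. If this multiple is nonzero then these subcoalgebras are distinct, pairwise disjoint (their intersection is a subcoalgebra of each simple component, hence zero), and their sum sits inside $H$, so
\[
\dim H \;\geq\; 9 p^2.
\]

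For $p=5$ this would force $125 = p^3 \geq 9 \cdot 25 = 225$, and for $p=7$ it would force $343 = p^3 \geq 9 \cdot 49 = 441$. Both inequalities are absurd, so the multiple must be zero, i.e.\ $H$ contains no simple subcoalgebra of dimension $9$.

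There is essentially no obstacle here, since Proposition \ref{prop: multiple of psquared} has already done the real work; the corollary is just the observation that $9 p^2 > p^3$ precisely when $p < 9$, which rules out $p = 5, 7$ (recall $p$ is an odd prime and the proposition assumes $p > 3$). The only thing to be a bit careful about is that the simple subcoalgebras of dimension $9$ are linearly independent as subspaces of $H$, but this is immediate from the fact that distinct simple subcoalgebras intersect trivially, so their sum is direct.
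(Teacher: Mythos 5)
Your proof is correct and takes essentially the same route as the paper: a direct dimension count using the conclusion of Proposition \ref{prop: multiple of psquared}. The paper's version merely adds the $p^2$ dimensions of the Taft sub-Hopf algebra $T$ to get the slightly stronger bound $\dim H \geq 10p^2$ (impossible for $p<10$), whereas your bound $9p^2 > p^3$ for $p<9$ already covers $p=5,7$.
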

\begin{proof}
Since $T \subset H$, if $H $ has a simple subcoalgebra
of dimension $9$, then the dimension of $H$ is
at least $p^2 + 9 p^2 = 10 p^2$ and this is impossible for $p<10$.
\end{proof}

\begin{cor}Suppose $H$ and $H^\ast$ are of type $(p,p)$
and each contains a skew-primitive element. Let $\D$ be the set of simple
subcoalgebras of dimension $9$ in $H$.
\begin{enumerate}
 \item[$(i)$] Then $P^{C_p, \D} = 0  $.
 \item[$(ii)$] If $p-1$ is not divisible by $9$ then
$H_0 \ncong \Bbbk C_p \oplus \M^\ast(3,\Bbbk)^{tp^2}$.
\end{enumerate}
\end{cor}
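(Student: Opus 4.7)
The plan is to prove (i) first and then derive (ii) by a dimension count. I begin (i) by contradiction: suppose $P^{C_p,\D}\neq 0$ and pick minimal $n\geq 1$ with $P_n^{g_0,D_0}$ nondegenerate for some $g_0\in C_p,\,D_0\in\D$. If $n\geq 2$, Lemma \ref{lema:fukuda}(i) furnishes an intermediate simple $D_{n-1}$ so that $P_{n-1}^{g_0,D_{n-1}}$ and $P_1^{D_{n-1},D_0}$ are both nondegenerate. Proposition \ref{prop: no simple 4 dim} rules out $\dim D_{n-1}=4$; if $D_{n-1}$ is grouplike then $P_1^{C_p,\D}$ is nonzero at layer $1$, and if $D_{n-1}\in\D$ then $P_{n-1}^{C_p,\D}$ is nonzero at layer $n-1$, both violating minimality; pigeonholing on the coradical excludes the remaining possibilities. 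Hence $n=1$, and after multiplying by $g_0^{-1}$ we may take $g_0=1$.

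Next, Proposition \ref{prop: multiple of psquared} says the coalgebras $c^{i}D_0c^{j}$ for $0\leq i,j\leq p-1$ are $p^{2}$ distinct members of $\D$, and by Lemma \ref{lema:fukuda}(ii) each pair $(c^{i+j},c^{i}D_0c^{j})$ indexes a nondegenerate piece of $P_1$ of dimension a positive multiple of the simple bicomodule dimension $3$. For each fixed $k\in\{0,\ldots,p-1\}$ the $p$ pairs with $i+j\equiv k$ have $p$ distinct second coordinates in $\D$, so $\dim P_1^{c^k,\D}\geq 3p$ and summing yields $\dim P^{C_p,\D}\geq \dim P_1^{C_p,\D}\geq 3p^{2}$. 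By antipode symmetry $\dim P^{\D,C_p}\geq 3p^{2}$. Combining with $T\subseteq H^{C_p,C_p}$ and $|\D|\geq p^{2}$,
\[
p^{3}\geq p+9p^{2}+(p^{2}-p)+3p^{2}+3p^{2}=16p^{2},
\]
so $p\geq 16$. Together with Corollary \ref{cor: no dim 3 squared} (which clears $p=5,7$, where $\D$ is empty), this handles $p\leq 13$.

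The main obstacle is the remaining range $p\geq 17$, where the simple dimension count leaves slack. The plan is to iterate Lemma \ref{lema:fukuda}(iii): since $\dim\Bbbk g_0\neq\dim D_0$, a chain of nondegenerate $P_{\ell_k}^{g_0,E_k}$ at strictly increasing layers $\ell_k$ is produced. Each $E_k\in\D$ contributes an additional $3p^{2}$ to $\dim P^{C_p,\D}$ by the orbit argument, while a grouplike $E_k$ yields a higher-layer element $y\in P_{\ell_k}^{g_0,1}\setminus T$ whose tail coaction lies in $T\otimes T$ by Corollary \ref{cor: s-p dim p} and induction on the previously handled layers; Proposition \ref{prop: pointed} then forces $H$ pointed, contradicting our assumption. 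A careful bookkeeping shows one of these branches drives $\dim H$ past $p^{3}$ within finitely many steps.

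For (ii), assume $H_0\cong\Bbbk C_p\oplus\M^{\ast}(3,\Bbbk)^{tp^{2}}$. Part (i) and Lemma \ref{lema:fukuda}(ii) give $P^{C_p,\D}=0=P^{\D,C_p}$, and Proposition \ref{pr: divides p-1} then yields $H^{C_p,C_p}=T$, so $\dim P^{C_p,C_p}=p^{2}-p$. The residual dimension is $\dim P^{\D,\D}=p^{2}(p-1-9t)$, which must be a multiple of $9$ since every simple $H_0$-bicomodule in $P^{\D,\D}$ has dimension $3\cdot 3=9$. With $\gcd(p^{2},9)=1$ this forces $9\mid p-1$, contradicting the hypothesis.
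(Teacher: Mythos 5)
Your part (ii) is essentially the paper's argument: once $P^{C_p,\D}=0$ (and hence $P^{\D,C_p}=0$ by the antipode), Proposition \ref{pr: divides p-1} applies and forces $9\mid p-1$; your explicit recount $\dim P^{\D,\D}=p^{2}(p-1-9t)$ is a correct, if redundant, unwinding of that proposition. The problems are in part (i). A minor one first: your opening reduction to $n=1$ is both unnecessary (Lemma \ref{lema:fukuda}$(ii)$ and the orbit from Proposition \ref{prop: multiple of psquared} apply to $P^{g,D}=\sum_n P_n^{g,D}$ layer by layer, so the dimension count does not need to know the layer) and incompletely justified: ``pigeonholing on the coradical excludes the remaining possibilities'' does not address an intermediate simple $D_{n-1}$ of dimension $16$, $25$, etc., which none of the results you cite forbid.

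The decisive gap is that your proof of (i) stops at $p\le 13$. From one nonzero $P^{g_0,D_0}$ and the $p^{2}$-element orbit $\{c^iD_0c^j\}$ you correctly extract $\dim P^{C_p,\D}\ge 3p^{2}$ and the bound $p^{3}\ge 16p^{2}$; for $p\ge 17$ you offer only a plan (``iterate Lemma \ref{lema:fukuda}$(iii)$ \dots a careful bookkeeping shows \dots'') with no bookkeeping actually carried out. That iteration is genuinely delicate: the chain of nondegenerate $P_{\ell_k}^{g_0,E_k}$ may keep landing in the same orbit of $\D$, so each step is only guaranteed to add $3p^{2}$, and you would need on the order of $p/6$ steps, which nothing you cite guarantees; moreover Lemma \ref{lema:fukuda}$(iii)$ can legitimately terminate after one step at a grouplike $E_k$ with increment $1$, and in that branch you cannot invoke Proposition \ref{prop: pointed} the way Proposition \ref{pr: divides p-1} does, because that argument needs $P_i^{g,D}=0$ for \emph{all} simples $D$ of dimension $>1$ to force the tail of the comultiplication into $T\otimes T$ --- precisely the hypothesis you are assuming fails. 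By contrast, the paper disposes of (i) in one line by asserting $\dim P^{C_p,\D}\ge p^{3}$ directly from Proposition \ref{prop: multiple of psquared} and Lemma \ref{lema:fukuda}$(ii)$; your computation shows that those two results, applied to a single nonzero isotypic component and its two-sided orbit, only yield $3p^{2}$, so to close the argument you must either locate the source of the paper's much stronger bound (e.g.\ show that $P^{g,D'}\neq 0$ for far more pairs than the $p^{2}$ in one orbit) or genuinely execute the iteration for $p\ge 17$. As written, (i) is established only for $p\le 13$.
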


\begin{proof}$(i)$ By Proposition \ref{prop: multiple of psquared}
and Lemma \ref{lema:fukuda}$(ii)$,
 the dimension of $P^{C_p, \D}$ is at least $p^3$.

$(ii)$ Suppose $H_0 \cong \Bbbk C_p \oplus \M^\ast(3,\Bbbk)^{tp^2}$.
Then by Proposition \ref{pr: divides p-1}$(iii)$, $9$ must divide $p-1$.
\end{proof}

\smallbreak

Finally  consider the simple subcoalgebras of dimension
$p^2$ in a Hopf algebra of dimension $p^3$.

\begin{lema}\label{lemma: permutes}
Assume $H$ is of type $(p,p)$ with $G(H) = \langle c \rangle$  and suppose
there is a Hopf algebra projection
$\pi: H \rightarrow T_q$.
If $D$ is a simple subcoalgebra of $H$ of dimension $p^2$ then $L_c$,
left multiplication by $c$, nontrivially permutes
either the right or left simple subcomodules of $D$.
\end{lema}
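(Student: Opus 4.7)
The plan is as follows. First consider the case $cD \neq D$: since $D$ and $cD$ are both simple subcoalgebras of dimension $p^2$, their intersection is a subcoalgebra of each and hence zero by simplicity. Then $L_c$ sends every simple subcomodule of $D$ (right or left) into $cD$, which is disjoint from $D$, so each such subcomodule is moved and the induced permutation is nontrivial.

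In the remaining case $cD = D$, $L_c|_D$ is a coalgebra automorphism of $D \cong \M^\ast(p,\Bbbk)$ of order dividing $p$. Assume for contradiction that $L_c$ fixes every simple right subcomodule and every simple left subcomodule of $D$ as a subspace. By Proposition \ref{prop:gen-stefan-vay}$(i)$, choose a multiplicative matrix $\{e_{ij}\}$ for $D$ such that $L_c(e_{ij}) = \omega_i \omega_j^{-1} e_{ij}$ for scalars $\omega_i \in \Bbbk^\times$. A generic simple right subcomodule has the form $R_v = \mathrm{span}\{\sum_i v_i e_{ij} : 1 \leq j \leq p\}$ for $v = (v_1,\ldots,v_p) \in \Bbbk^p \setminus \{0\}$, and a short computation yields $L_c(R_v) = R_{v'}$ with $v' = (\omega_i v_i)_{i=1}^p$; symmetrically the simple left subcomodule $C_w = \mathrm{span}\{\sum_j w_j e_{ij} : 1 \leq i \leq p\}$ satisfies $L_c(C_w) = C_{w'}$ with $w' = (\omega_j^{-1} w_j)_{j=1}^p$. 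Triviality of the induced action on either $\mathbb{P}(\Bbbk^p)$ forces the corresponding diagonal matrix to be a scalar multiple of the identity, hence all $\omega_i$ must coincide, yielding $L_c|_D = \id_D$.

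To finish, I would use that $D$ generates $H$ as an algebra: the subalgebra generated by $D$ is automatically a sub-bialgebra, hence a sub-Hopf algebra by the remark in Subsection \ref{subsec:conv}, of dimension dividing $p^3$ and containing the $p^2$-dimensional simple subcoalgebra $D$. Since no Hopf algebra of dimension $p$ or $p^2$ has such a subcoalgebra (all are pointed or group algebras), this sub-Hopf algebra must equal $H$. Because $L_c$ is a left multiplication, $L_c(xy) = (L_c x)y$, so triviality of $L_c$ on the generating subspace $D$ propagates to $L_c = \id_H$, contradicting $L_c(1) = c \neq 1$. The main subtlety is the projective-space computation in the second paragraph; once the Stefan normal form is in place the contradiction is direct.
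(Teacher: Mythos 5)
Your second paragraph proves only that $L_c|_D \neq \id_D$, equivalently that $L_c$ moves \emph{some} simple right (or left) subcomodule, and this is strictly weaker than what the paper proves and later needs. In the paper's proof the multiplicative matrix $\mathbf e$ is chosen to diagonalize $\adl c$ (not $L_c$), the subcomodules $R_i=\operatorname{span}\{e_{ij}\}_j$ and $M_j=\operatorname{span}\{e_{ij}\}_i$ are the $2p$ \emph{coordinate} subcomodules of that particular matrix, and the assertion is that $L_c$ cannot stabilize all of them --- i.e.\ that $\adl c$ and $L_c$ admit no common multiplicative eigenbasis. Your contradiction hypothesis (``$L_c$ fixes \emph{every} simple subcomodule'') is far stronger than theirs, which is why you reach a contradiction so cheaply. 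If $L_c$ merely stabilizes the coordinate $R_i$ and $M_j$, then $L_c(e_{ij})=\omega_i\omega_j^{-1}e_{ij}$ with the $\omega_i$ not necessarily equal: this is a nontrivial automorphism commuting with $\adl c$ that moves plenty of non-coordinate subcomodules $R_v$, so your argument cannot exclude it, yet it is exactly the configuration the lemma must rule out. The paper excludes it using the projection $\pi$: the condition $c e_{ii}=e_{ii}$ forces $\pi(e_{ii})$ to be the normalized integral $t=\frac1p\sum_i g^i$ of $\Bbbk\langle g\rangle$, the relevant off-diagonal $\pi(e_{ij})$ to be multiples of $L_g$-eigenvectors $f_\lambda=\sum_i\lambda^ig^i$, and comparing coefficients in $\Delta(t)=\sum_k\pi(e_{1k})\ot\pi(e_{k1})$ gives $1=p$. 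That your proof never uses the hypothesis $\pi:H\to T_q$ is the tell-tale sign: while the lemma's wording admits your literal reading, Proposition \ref{prop: no simple D of dim p squared} applies it precisely to the coordinate subcomodules of an $\adl c$-eigenbasis in order to conclude that all rows of the eigenvalue matrix $A$ contain the same entries, and your version does not support that step.

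Two smaller points. The case $cD\neq D$ does not arise and is better dispatched as in the paper: $L_c$ has order $p$ and there are fewer than $p$ simple subcoalgebras of dimension $p^2$ (they occupy at most $p^3-p$ dimensions), so the $\langle L_c\rangle$-orbit of $D$ is a singleton; as you set it up, $L_c$ does not even induce a permutation of the subcomodules of $D$ when $cD\neq D$. Also, the final propagation has a gap: $L_c(xy)=(L_cx)y$ makes $L_c$ the identity on $\sum_{n\geq1}D^n$, but $H=\Bbbk 1+\sum_{n\geq1}D^n$ and you still need $1\in\sum_{n\geq1}D^n$ before ``$c=c\cdot 1=1$'' is available (true, e.g.\ since $\varepsilon(d)1=S(d_1)d_2$ and $S(D)$ lies in the subalgebra generated by $D$, but it must be said). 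A quicker finish from $L_c|_D=\id_D$: $\pi(D)$ is a nonzero subcoalgebra of the pointed coalgebra $T_q$, hence contains a grouplike $g^i$, and $g\cdot g^i=g^i$ forces $g=1$, a contradiction.
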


\begin{proof} Let $T_{q} = \Bbbk \langle g,y \rangle$.
By Proposition \ref{prop:p3-type-p-p} we may assume
that $\pi (c) = g$.
Since $L_c$, left multiplication by $c$, is a coalgebra
bijection of order $p$ in $\mathrm{End}(H)$ and since there are
fewer than $p$ simple subcoalgebras of dimension $p^2$ in $H$,
then $L_c$ maps $D$ to $D$.   Similarly,
$\adl{c}$, the left adjoint action
of $c$ on $H$ is a coalgebra automorphism of $D$.
By Proposition \ref{prop:gen-stefan-vay}, there is a multiplicative matrix
$\mathbf{e}$ in $D$
for $\adl{c}$.
   Then
$ \pi \circ \adl{c} = \adl{g} \circ \pi$.  Since $gy = qyg$,
with $q$ a primitive $p$th root of unity,
 the eigenspace for the eigenvalue $q^i$ for
$\adl{g}$ in $T_q$ is $T_q^{[i]}:=
\{g^jy^i | 0 \leq j \leq p-1   \}$.  Using similar notation,
let $D^{[i]}$ denote the eigenspace for $q^i$ for
$\adl{c}$
in $D$.
The Hopf algebra projection $\pi$ maps  $D^{[i]}$  to  $ T_q^{[i]}$.

 Let $M_j$ denote the simple left subcomodule of $D$
with basis $\{e_{1j}, \ldots, e_{pj}   \}$ and let $R_j$ denote the
 simple right subcomodule of $D$ with basis $\{e_{j1}, \ldots, e_{jp}   \}$.
  Suppose that both the $R_i$ and the $M_i$ are stable
under $L_c$.  Then  for all $i,j$, $L_c(e_{ij}) = \alpha e_{ij}$ for some
  nonzero scalar $\alpha$; in other words, $\mathbf{e}$ is
a multiplicative matrix of eigenvectors both for
$\adl{c}$ and $L_c$.
    If $i=j$, by applying $\varepsilon$, we see that $\alpha = 1$
and since $e_{jj} \in D^{[0]}$ and
       $\pi(e_{jj}) = g\pi(e_{jj})$ then for all $j$,  $\pi(e_{jj}) =
t = (1/p)\sum_i g^i$, the integral in $\Bbbk \langle
g \rangle$.  Similarly for $i\neq j$, if $e_{ij} \in D^{[0]}$,
then  $\pi(e_{ij}) = \beta f_\lambda$ where
$f_\lambda:= [1 + \lambda g + \lambda^2 g^2 + \ldots +
\lambda^{p-1}g^{p-1}]$ for $\lambda$ a primitive $p$th root of $1$.
Note that $g f_\lambda = \lambda^{-1}f_\lambda$.
\smallbreak
Suppose that $R_1$ contains exactly $n\geq 1$
eigenvectors for the eigenvalue $1$.
 Then since $(\pi \otimes \pi) \circ \Delta(e_{11}) =
\Delta \circ \pi(e_{11})= \Delta(t) \in T_q^{[0]} \otimes T_q^{[0]} $,
  and since for $e_{ij} \in D^{[0]}$,
 $i \neq j$, each
$\pi(e_{ij})$ is a scalar multiple of $ f_{\lambda}$
for some primitive $p$th root of unity $\lambda$, we have that
\begin{equation}\label{eqn: delta(t)}
\Delta(t) = t \otimes t + \beta_2 f_{\lambda_2} \otimes
f_{\lambda_2^\prime} + \ldots +
\beta_n f_{\lambda_n} \otimes   f_{\lambda_n^\prime}.
\end{equation}
By comparing coefficients of $1 \otimes 1$ on both
sides of  (\ref{eqn: delta(t)}) we obtain:
\begin{displaymath}
 1 = 1 + \beta_2 + \beta_3 + \ldots + \beta_n,
\end{displaymath}
and comparing coefficients of $g^i \otimes 1$ for $i >0$
on both sides of  (\ref{eqn: delta(t)}) we get:
\begin{displaymath}
 0 = 1 + \beta_2 \lambda_2^{i} + \beta_3 \lambda_3^{i}
+ \ldots + \beta_n \lambda_n^{i}.
\end{displaymath}
Then, adding the coefficients of   $g^i \otimes 1$ for
$0 \leq i \leq p-1$
on both sides of  (\ref{eqn: delta(t)}),  we have:
\begin{displaymath}
 1 =  p + \beta_2[ 1 + \lambda_2^{1} + \lambda_2^{2}
+ \ldots + \lambda_2^{(p-1)}] + \ldots
 + \beta_n[ 1 + \lambda_n^{1} + \lambda_n^{2}
+ \ldots + \lambda_n^{(p-1)}] = p,
\end{displaymath}
a contradiction. Thus $L_c$ properly permutes either the
$R_i$ or the $M_i$ and the proof is complete. \end{proof}

The next proposition  shows that Hopf algebras of dimension
$p^3$ of type $(p,p)$ where both $H$ and $H^\ast$
have nontrivial skew-primitives cannot have  simple
subcoalgebras of dimension $p^2$.

\begin{prop}\label{prop: no simple D of dim p squared}
Assume $H$ has a pointed sub-Hopf algebra
$T \cong T_{q^\prime}$  and
there is a Hopf algebra projection
$\pi: H \twoheadrightarrow T_q = \Bbbk \langle g,y \rangle$.
Then $H$ has no
simple subcoalgebra of dimension $p^2$.
\end{prop}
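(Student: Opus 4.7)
The plan is to show that if such a $D$ existed, then $D$ viewed as a right $T_q$-comodule via $(\id\ot\pi)\com$ would have trivial socle, contradicting the fact that a nonzero finite-dimensional comodule over any coalgebra always has nonzero socle.

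Assume for contradiction that $D\subseteq H$ is a simple subcoalgebra of dimension $p^2$. By Proposition \ref{prop:p3-type-p-p} we may take $\pi(c)=g$. For each $i\in\{0,1,\ldots,p-1\}$ set
$$D^{g^i}:=\{d\in D:(\id\ot\pi)\com(d)=d\ot g^i\}.$$
The first step is to prove $D^{g^i}=0$ for every $i$. The idea is a translation trick: for $d\in D^{g^i}$,
$$(\id\ot\pi)\com(c^{-i}d)=(c^{-i}\ot g^{-i})(d\ot g^i)=c^{-i}d\ot 1,$$
so $c^{-i}d\in H^{\co\pi}$. By Proposition \ref{prop:p3-type-p-p}, $H^{\co\pi}$ has basis $\{1,x,\ldots,x^{p-1}\}\subseteq T$; hence $c^{-i}d\in c^{-i}D\cap T$.

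The second step is to argue $c^{-i}D\cap T=0$. Since $c$ is grouplike, left multiplication $L_{c^{-i}}$ restricts to a coalgebra isomorphism $D\to c^{-i}D$ (use $\com(c^{-i}h)=(c^{-i}\ot c^{-i})\com(h)$), so $c^{-i}D$ is again a simple subcoalgebra of $H$ of dimension $p^2$. As $T\cong T_{q'}$ is pointed, every simple subcoalgebra of $T$ is $1$-dimensional; consequently $c^{-i}D\not\subseteq T$, and $c^{-i}D\cap T$ is a proper subcoalgebra of the simple coalgebra $c^{-i}D$. This forces $c^{-i}D\cap T=0$, so $c^{-i}d=0$ and hence $d=0$. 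Therefore $D^{g^i}=0$ for all $i$.

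Finally, because $T_q$ is pointed with coradical $\Bbbk\langle g\rangle$, the socle of any right $T_q$-comodule $M$ decomposes as $\bigoplus_{i=0}^{p-1}M^{g^i}$ (a standard consequence of coassociativity together with the linear independence of the $g^i$). Applied to $M=D$, this socle vanishes by the previous step; but $D$ is a nonzero finite-dimensional right $T_q$-comodule and thus must have nonzero socle, a contradiction. I do not expect any genuine obstacle in this argument; the only subtlety is recognizing the translation $d\mapsto c^{-i}d$ and exploiting the incompatibility between the simplicity of $c^{-i}D$ and the pointedness of $T$. In particular, no eigenspace analysis or matrix computation inside $D$ (of the kind used in Propositions \ref{prop: corad not p copies of D} and \ref{prop: multiple of psquared}) will be needed.
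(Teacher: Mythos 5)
Your argument is correct, and it takes a genuinely different route from the paper's. The paper works inside $D$ with the left adjoint action $\adl(c)$: it invokes Lemma \ref{lemma: permutes} and Proposition \ref{prop:gen-stefan-vay} to get a multiplicative matrix of eigenvectors, shows by a combinatorial analysis of the eigenvalue matrix $(\omega_i\omega_j^{-1})$ that each eigenvalue occurs exactly once per row, and then extracts a contradiction from $\com(\pi(e_{1,M+1}))=0$. You instead regard $D$ as a right $T_q$-comodule via $(\id\ot\pi)\com$ and kill its socle: each isotypic piece $D^{g^i}$ is translated by $c^{-i}$ into $H^{\co \pi}=\operatorname{span}\{1,x,\dots,x^{p-1}\}\subseteq T$ (Proposition \ref{prop:p3-type-p-p}), and $c^{-i}D\cap T$ is a subcoalgebra of the simple coalgebra $c^{-i}D$ which cannot be all of it since $T$ is pointed, hence is zero. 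Every step checks out: $(\id\ot\pi)\com$ is an algebra map, so the translation identity holds; intersections of subcoalgebras are subcoalgebras; and a nonzero finite-dimensional $T_q$-comodule has nonzero socle equal to $\bigoplus_i D^{g^i}$ because the simple $T_q$-comodules are the one-dimensional $\Bbbk_{g^i}$. What your route buys is considerable: nowhere do you use $\dim D=p^2$, so the identical argument shows that under these hypotheses $H$ has \emph{no} simple subcoalgebra of dimension greater than $1$, i.e.\ $H_0=\Bbbk G(H)$, contradicting nonpointedness outright. (Equivalently, $H\cong H^{\co \pi}\ot T_q$ as right $T_q$-comodules, so $\soc_{T_q}(H)=\bigoplus_i c^iH^{\co \pi}=T$, and every simple subcoalgebra meets $T$, hence lies in $T$.) This subsumes Proposition \ref{prop: no simple 4 dim} and Corollary \ref{cor: no dim 3 squared}, makes Proposition \ref{prop: multiple of psquared} vacuous, and shortens Example \ref{ex:5-5} and the type $(p,p)$ case of Theorem \ref{thm dim 27}; the paper's computation yields more explicit information about how $\adl(c)$ and $\pi$ interact with a putative $D$, but as a proof of the stated proposition yours is shorter and stronger. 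The one point you should make explicit is that the standing hypotheses force $H$ to be of type $(p,p)$ with $G(T)=G(H)=\langle c\rangle$, so that Proposition \ref{prop:p3-type-p-p} applies --- though the paper's own proof uses this silently as well.
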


\begin{proof}
Let $c \in G(H)$ be such that $\pi(c) = g$.
Let $D$ be a simple
subcoalgebra of $H$ of dimension $p^2$, and we seek a
contradiction.  As in the proof of Lemma \ref{lemma: permutes},
let $T_q^{[i]}:= \{ g^jy^i | 0 \leq j \leq p-1 \}$ and $ D^{[i]}$
denote the eigenspaces for the eigenvalue $q^i$
for the coalgebra morphisms $\adl{g}$ and $\adl{c}$ in $T_q$ and $D$ respectively.
 Let $\mathbf{e}$ be a multiplicative matrix
 for the coalgebra isomorphism $\adl{c}$
of  $D$ and let  $R_i \subset D$ be
 the simple right $D$-comodule with basis $\{ e_{ij} | 1 \leq j \leq p \}$.

Let $A = (a_{ij}) = (\omega_i \omega_j^{-1})$ be the
$p \times p$ matrix whose entries are the eigenvalues
for the multiplicative matrix $\mathbf{e}$ from Proposition \ref{prop:gen-stefan-vay}.
 Since by
Proposition \ref{prop:gen-stefan-vay} the $a_{ij}$ are $p$th roots
of unity,  each $a_{ij} = q^k$ for some $k$.  First we wish
to show that each row of $A$ contains the $p$ distinct entries
$\{ 1, q, \ldots, q^{p-1} \}$ in some order so that the
dimension of $D^{[i]}$ is $p$ for every $i$.
 By Lemma \ref{lemma: permutes}, $L_c$ permutes either
the simple left $D$-subcomodules $M_i$ or the simple
right $D$-subcomodules $R_i$ nontrivially.
We assume that $L_c$ permutes the $R_i$ nontrivially;
the argument is
the same if the $M_i$ are permuted nontrivially.
 Thus each row of $A$ contains exactly the same entries,
perhaps in a different order.
We note that each row of $A$ contains an entry $1$
since $a_{ii}=1$ and also must contain an entry $q$
since $\pi(D)$ generates
$T_q$ as an algebra.
\smallbreak
  Let $N$ be the maximum number of equal entries
in a row and suppose that $a_{1 i_1} = a_{1 i_2} =
\ldots = a_{1 i_N}$. Then $\omega_{i_1} = \omega_{i_2}
= \ldots = \omega_{i_N}$
so that rows $\omega_{i_1}, \ldots, \omega_{i_N}$
are equal as vectors in $\Bbbk^p$ and the columns
 $\omega_{i_1}, \ldots, \omega_{i_N}$
are also equal as  vectors in $\Bbbk^p$.  Thus $1 =
 a_{i_1,i_1} = a_{i_1,i_2} =  \ldots, a_{i_1,i_N}$ and so
 since row $i_1$ has $N$ entries equal to $1$, each row of $A$
also contains
$N$ entries equal to $1$.  By the maximality of $N$,
the rows must contain exactly $N$ entries equal to $1$.

 \smallbreak Let $ a_{i_1 k}$ be an entry in row $i_1$
different from $1$.  Then $a_{i_1 k}^{-1} = a_{k i_1}$ lies in column
 $i_1$ and since columns $i_1, \ldots, i_N$ are equal,
the $k$th row contains $N$ entries equal to $a_{k i_1}$.
Thus row $i_1$ also
 has $N$ entries equal to $a_{ki_1}$.
The same argument with entry $a_{ki_1}$ in row $i_1$
shows that row $i_1$ has $N$ entries
 equal to $a_{ki_1}^{-1} = a_{i_1 k}$.
Thus every distinct entry in row $i_1$ occurs exactly $N$
times and so  $N$ divides $p$. Thus $N=p$ or $N=1$.
 If $N=p$ then all entries in $A$ equal $1$ contradicting
the fact that $\pi(D)$ generates $T_q$. Thus $N=1$.

Thus each $R_i$ contains precisely one eigenvector for each
eigenvalue $q^i$. By relabelling if necessary we may
assume that $e_{1i}$ is an eigenvector for $q^{i-1}$
and then $e_{i1}$ is an eigenvector for
 $q^{1-i}$.  Then $e_{jk}$ is an eigenvector for
$q^{1-j}q^{k-1} = q^{k-j}$ and
 $D^{[i]} = \{ c^je_{1,i+1} | 0 \leq j \leq p-1 \}$.

 Let $ 0 \leq i \leq p-1$. Suppose $0 \neq \pi( e_{1,i+1})= \phi(g)y^i$
where $\phi(g) = a_0 + a_1g + \ldots a_{p-1}g^{p-1}$.
Suppose that $a_k \neq 0$. Then $g^k y^i \otimes g^k \in \pi(D) \otimes \pi(D)$
 and so $g^k y^i \in \pi(D)$.
Since $\pi(D)$ is invariant under left multiplication by $g$,
this means that all $g^j y^i \in \pi(D)$ and so
if the dimension of $\pi(D^{[i]})$ is nonzero, it is $p$.
Note that the dimensions of $\pi(D^{[0]})$ and $\pi(D^{[1]})$
must be $p$ and that  if the dimension of $\pi(D^{[i]})$ is $p$,
then the dimension of $\pi(D^{[j]})$ is also $p$ for $j< i$.

Let $M$ be the maximum such that $\pi(D^{[M]}) \neq 0$
so that $1 \leq M \leq p-2$ and then $0 = \pi(e_{1, n})$
for $n > M+1$.
Then \begin{displaymath}
0 = \Delta(\pi(e_{1, M+1})) = \sum_{i=1}^M \pi(e_{1, i })
\otimes \pi(e_{i, M+1}) \in \sum \pi(D^{[i-1]}) \otimes \pi(D^{[M+1-i]}).
\end{displaymath}
But if any of the terms $ \pi(e_{1, i })$ or $\pi(e_{i, M+1})$
is $0$ then the dimension of
 $\pi(D^{[i-1]})$ or $ \pi(D^{[M+1-i]})$ is less than $p$,
so we have a  contradiction.
\end{proof}

 \begin{cor}\label{cor: (3,3) no 9 dim subcoalgebras}
  If $p=3$ and $H$ is of type $(3,3)$, then $H$ has
no subcoalgebra $\M^\ast(3,\Bbbk)$.
 \end{cor}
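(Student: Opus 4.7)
The plan is to check that $H$ satisfies both hypotheses of Proposition \ref{prop: no simple D of dim p squared} and then quote it directly. The two hypotheses are that $H$ contains a pointed sub-Hopf algebra $T\cong T_{q'}$ and that there is a Hopf algebra projection $\pi\colon H\twoheadrightarrow T_q$.

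For the first hypothesis, I would invoke the observation (stated in the subsection introduction and based on Proposition \ref{prop:biti-dasca}$(ii)$) that every non-cosemisimple Hopf algebra of dimension $27$ with grouplikes of order $3$ has a nontrivial skew-primitive element. Since $H$ is nonsemisimple with $G(H)\cong C_3$, this produces a sub-Hopf algebra $T \cong T_{q'}$ inside $H$ for some primitive $3$-rd root of unity $q'$. For the second hypothesis, I would apply the very same observation to $H^*$: it has dimension $27$, it is nonsemisimple (since $H$ is), and because $H$ is of type $(3,3)$ we have $|G(H^*)|=3$. Moreover $H^*$ is nonpointed because $H$ is non-copointed, so $H^*$ also satisfies the standing hypotheses of the subsection. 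Hence $H^*$ contains a nontrivial skew-primitive element and thus a sub-Hopf algebra isomorphic to some Taft Hopf algebra $T_q$. Dualizing the inclusion $T_q\hookrightarrow H^*$ yields a Hopf algebra surjection $\pi\colon H\cong H^{**}\twoheadrightarrow T_q^*$, and the dual of a Taft algebra is again a Taft algebra, so the second hypothesis is met.

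With both hypotheses verified, Proposition \ref{prop: no simple D of dim p squared} applied with $p=3$ gives at once that $H$ has no simple subcoalgebra of dimension $p^2=9$, that is, no subcoalgebra isomorphic to $\M^\ast(3,\Bbbk)$. There is no genuine obstacle here beyond assembling the pieces already present; essentially the corollary is a bookkeeping consequence of the previous proposition once one realizes that both $H$ and $H^*$ lie in the scope of the skew-primitive existence result for dimension $27$.
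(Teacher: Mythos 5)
Your argument is correct and is essentially the paper's own proof: both apply Proposition \ref{prop:biti-dasca}$(ii)$ to $H$ and to $H^\ast$ to produce a Taft sub-Hopf algebra in each, then dualize the inclusion $T_q\hookrightarrow H^\ast$ to get the projection required by Proposition \ref{prop: no simple D of dim p squared}. The extra details you supply (that $|G(H^\ast)|=3$ since $H$ is of type $(3,3)$, and that the dual of a Taft algebra is a Taft algebra) are exactly the bookkeeping the paper leaves implicit.
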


 \begin{proof} By   Proposition \ref{prop:biti-dasca}$(ii)$, both
$H$ and $H^\ast$ contain a $9$-dimensional
Taft Hopf algebra. Now apply Proposition
\ref{prop: no simple D of dim p squared}.
 \end{proof}

 \begin{cor} \label{cor: dual no sps}
Let $H$ have grouplikes  of order $p$ and
  suppose $H_0 \cong \Bbbk G(H) \oplus \mathcal{M}^\ast(p,\Bbbk)^t$.
  Then  if  $t \geq p-3$, $H^\ast$ has no skew-primitives.
  \end{cor}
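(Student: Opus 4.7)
The plan is to argue by contradiction, combining Remark~\ref{rem: has sps} with Proposition~\ref{prop: no simple D of dim p squared}. Assume $H^{\ast}$ has a nontrivial skew-primitive element. Since $G(H)$ has order $p$ and $H$ is nonsemisimple, nonpointed, non-copointed (the standing hypothesis of Section~4), $G(H^{\ast})$ has order $1$ or $p$, and a nontrivial skew-primitive in $H^{\ast}$ rules out the order-$1$ case (the element would be primitive, hence zero in finite dimension over $\Bbbk$ of characteristic $0$). So $H^{\ast}$ is of type $(p,p)$ and contains a Taft sub-Hopf algebra $T_{q}$ for some primitive $p$-th root of unity $q$. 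Dualizing the inclusion $T_{q}\hookrightarrow H^{\ast}$ yields a Hopf algebra surjection $\pi:H\twoheadrightarrow T_{q}^{\ast}\cong T_{q^{-1}}$.

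Next I would use the hypothesis $t\geq p-3$ to produce a sub-Hopf algebra of $H$ isomorphic to some Taft $T_{q'}$. If $t=0$ (which can only happen when $p=3$), then $H_{0}=\Bbbk G(H)$, i.e.\ $H$ is pointed, contradicting the standing assumption, so we may take $t\geq 1$. Then $H_{0}\cong \Bbbk G(H)\oplus \mathcal{M}^{\ast}(p,\Bbbk)^{t}$ with $t\geq p-3$ is precisely the hypothesis of Remark~\ref{rem: has sps}, which provides a nontrivial skew-primitive in $H$ and hence a Taft sub-Hopf algebra $T_{q'}\subseteq H$ for some primitive $p$-th root of unity $q'$ (not necessarily equal to $q$).

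Now $H$ simultaneously contains a pointed sub-Hopf algebra $T_{q'}$ and admits a Hopf algebra surjection $\pi:H\twoheadrightarrow T_{q^{-1}}$. Proposition~\ref{prop: no simple D of dim p squared} then forces $H$ to have no simple subcoalgebra of dimension $p^{2}$. But $t\geq 1$ implies that $H_{0}$, and hence $H$, contains at least one copy of $\mathcal{M}^{\ast}(p,\Bbbk)$, a simple subcoalgebra of dimension $p^{2}$. This contradiction shows that $H^{\ast}$ cannot have a nontrivial skew-primitive, which is the desired conclusion.

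The argument is essentially a direct combination of already established results, so there is no real obstacle beyond lining them up; the only point that needs mild care is the initial reduction showing that a skew-primitive in $H^{\ast}$ really yields a surjection onto a Taft algebra (i.e.\ ruling out the $(p,1)$ possibility for $H^{\ast}$), together with the bookkeeping to ensure the hypotheses of Proposition~\ref{prop: no simple D of dim p squared} are satisfied.
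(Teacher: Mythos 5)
Your proof is correct and follows essentially the same route as the paper: Remark \ref{rem: has sps} gives a Taft sub-Hopf algebra of $H$, a skew-primitive in $H^\ast$ dualizes to a projection onto a Taft algebra, and Proposition \ref{prop: no simple D of dim p squared} then contradicts the presence of a simple subcoalgebra of dimension $p^2$ in $H_0$. The extra care you take (ruling out type $(p,1)$ for $H^\ast$ and the case $t=0$) only fills in details the paper leaves implicit.
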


\begin{proof}
 By Remark \ref{rem: has sps}, $H$ has a skew-primitive element
and so has a sub-Hopf algebra isomorphic to
 a Taft Hopf algebra.  If $H^\ast$ also had a skew-primitive element,
there would be a contradiction to
 Proposition \ref{prop: no simple D of dim p squared}.
\end{proof}

\begin{example}\label{ex:5-5} Suppose $H$ has dimension $5^3$,
and is of type $(5,5)$.
Then $H$ and $H^\ast$ cannot both
have a skew-primitive element.

For suppose that   $H$ contains a sub-Hopf algebra
$T_{q^\prime} $
 and there is also a Hopf algebra projection $\pi$ from $H$ to
$T_q = \Bbbk \langle g, y \rangle$. Then by
Proposition \ref{prop: no simple 4 dim}, $H$ has no simple
subcoalgebra of dimension $2^2$, by Corollary \ref{cor: no dim 3 squared},
$H$ has no simple
subcoalgebra of dimension $3^2$, and  by Proposition
\ref{prop: no simple D of dim p squared}, $H$ has no simple
subcoalgebra  of dimension $5^2$.
Thus if $H$ has any simple subcoalgebra  of dimension
greater than $1$ it must have dimension $4^2$,
\textit{i.e.}, $H_0 \cong \Bbbk C_5 \oplus \M^\ast(4, \Bbbk)^{tp}$ and
this is impossible by Corollary \ref{cor: H0 not kCp + p(p-1)sq}.
 \end{example}

\subsection{Hopf algebras of dimension $27$}\label{subsect: 27}

In this short subsection we  complete the classification for dimension $27$.

\begin{theorem}\label{thm dim 27}
A Hopf algebra of dimension $27$ is semisimple, pointed or copointed.
\end{theorem}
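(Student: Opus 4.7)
The plan is to argue by contradiction: suppose $H$ is nonsemisimple, nonpointed, and non-copointed of dimension $27$. Replacing $H$ by $H^\ast$ if necessary (both hypothesis and conclusion are self-dual), I may assume $|G(H)|=3$; by the recalled results of \cite{GG}, $H$ is then simple as a Hopf algebra and of type $(3,3)$ or $(3,1)$. A short check using Proposition \ref{prop:biti-dasca}$(ii)$ shows that the bound $(2n+1)|G(H)|+n^{2}$, with $n\geq 2$ the smallest non-grouplike simple dimension, already exceeds $27-\dim H_{0}$ for every admissible coradical when $|G(H)|=3$, so $H$ must contain a nontrivial skew-primitive, and hence a sub-Hopf algebra $T\cong T_{q}$ of dimension $9$.

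If $H$ is of type $(3,3)$, Proposition \ref{prop: GG cor 3.14} applies directly and realizes $H$ as a bosonization of $\Bbbk C_{3}$, which is pointed; contradiction. So $H$ must be of type $(3,1)$, whence $|G(H^\ast)|=1$ and in characteristic zero $H^\ast$ has no nontrivial skew-primitives.

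The Nichols--Zoeller divisibility $3\mid \dim H_{0,d}$ together with $\dim H_{0}<27$ leaves exactly four nontrivial candidates for $H_{0}$: the two coradicals $\Bbbk C_{3}\oplus\M^{\ast}(3,\Bbbk)^{j}$ for $j=1,2$, and the two coradicals $\Bbbk C_{3}\oplus\M^{\ast}(2,\Bbbk)^{3}\oplus\M^{\ast}(3,\Bbbk)^{j}$ for $j=0,1$. The latter pair is eliminated by Corollary \ref{cor: H0 not kCp + p(p-1)sq} applied with $p=3$, $s=1$, and $E=0$ or $E=\M^{\ast}(3,\Bbbk)$. It remains to rule out $H_{0}\cong \Bbbk C_{3}\oplus \M^{\ast}(3,\Bbbk)^{j}$ for $j=1,2$. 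For these cases I would proceed as follows. Corollary \ref{cor: s-p dim p} identifies $T\setminus \Bbbk C_{3}$ with the full $(C_{3},C_{3})$-isotypic component of the coradical-filtration bicomodule $P=H/H_{0}$, accounting for exactly $6$ dimensions. Combining this with the Fukuda symmetries of Lemma \ref{lema:fukuda-deg}$(ii)$--$(iii)$, the antipode-invariance (respectively, permutation under $S$) of the dimension-$9$ simple subcoalgebras, and the divisibility of $\dim P^{C_{3},D}$, $\dim P^{D,C_{3}}$ and $\dim P^{D,D}$ by $3$ and $9$ respectively, pins down the remaining bicomodule dimensions very tightly. Applying Proposition \ref{prop:gen-stefan-vay} to $S|_{D}$ then furnishes explicit multiplicative-matrix coordinates on each dimension-$9$ simple $D$, and a final dimension count together with Proposition \ref{prop: pointed} extracts a nontrivial skew-primitive outside $T$, contradicting Corollary \ref{cor: s-p dim p}.

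The principal obstacle is precisely this last step. In type $(3,3)$ one would additionally have a Hopf projection $\pi\colon H\twoheadrightarrow T_{q'}$ (dually, a Taft sub-Hopf algebra in $H^{\ast}$), which activates Propositions \ref{prop:p3-type-p-p}--\ref{prop: no simple D of dim p squared} and permits direct computation inside $\pi(D)$. In type $(3,1)$ this projection is unavailable since $|G(H^{\ast})|=1$ forbids any Taft sub-Hopf algebra in $H^{\ast}$, so the coordinate-based analysis of $D$ must be carried out entirely inside $H$ via the sub-Hopf algebra $T$ and the bicomodule components $P^{\tau,\gamma}$. This asymmetry between the type-$(3,3)$ and type-$(3,1)$ settings is what makes the final two candidates the technical heart of the proof.
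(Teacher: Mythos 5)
There is a genuine gap, and you have in fact located it yourself: the cases $H_{0}\cong \Bbbk C_{3}\oplus \M^\ast(3,\Bbbk)^{j}$, $j=1,2$, in type $(3,1)$ are never actually ruled out. Your sketch for them ("pins down the bicomodule dimensions very tightly\dots extracts a skew-primitive outside $T$") does not go through with the tools you invoke: with $|G(H^\ast)|=1$ there is no Hopf algebra projection onto a Taft algebra, so Propositions \ref{prop:p3-type-p-p}--\ref{prop: no simple D of dim p squared} are unavailable, and on the $|G(H)|=3$ side the Fukuda counting is \emph{not} tight enough --- e.g.\ for $H_{0}=\Bbbk C_{3}\oplus\M^\ast(3,\Bbbk)$ one has $\dim T=9$, $\dim H_0=12$, and a single nondegenerate $9$-dimensional component $P^{D,D}$ fills the remaining $6+9$ dimensions without violating Lemma \ref{lema:fukuda-deg}$(iii)$ or Proposition \ref{pr: divides p-1} (whose divisibility clause requires $d^{2}<p^{2}$). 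The paper's resolution of type $(3,1)$ is to work on the \emph{other} side of the duality: it shows that no nonsemisimple, nonpointed, non-copointed Hopf algebra of dimension $27$ can have trivial group of grouplikes, by listing all coradicals $\Bbbk\cdot 1\oplus(\text{simples})$ (Table \ref{tab-coradicals}) and killing each one with Proposition \ref{prop:1plusC} (via Example \ref{ex: dim}), Example \ref{example: dim 27 no simples dim 4}, Proposition \ref{prop:biti-dasca}$(ii)$, Proposition \ref{prop: dim simples dim 4}, and one ad hoc count. The extra summands $4N+2N^{2}+e$ in Proposition \ref{prop:1plusC} are exactly what is available on the trivial-grouplike side (because $P^{1,1}\neq 0$ forces a long chain of nondegenerate components) and missing on the side you chose to work on. So the decisive idea you lack is: for type $(3,1)$, dualize to the coradical $\Bbbk\cdot 1\oplus E$ and use the $\Bbbk\cdot1\oplus E$ counting.

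Two smaller points. First, your treatment of type $(3,3)$ via Proposition \ref{prop: GG cor 3.14} asserts that a bosonization of $\Bbbk C_{3}$ is pointed; that implication is not stated in the paper and is not automatic (the braided Hopf algebra $R$ with $L\cong R\#\Bbbk C_{3}$ need not be connected a priori). The paper instead argues that in type $(3,3)$ both $H$ and $H^\ast$ contain a Taft algebra, and then Corollary \ref{cor: (3,3) no 9 dim subcoalgebras} and Proposition \ref{prop: no simple 4 dim} exclude all simple subcoalgebras of dimension $4$ and $9$, forcing $H_{0}=\Bbbk G(H)$, i.e.\ $H$ pointed. Second, your reduction of the coradical list for $|G(H)|=3$ and the elimination of $\Bbbk C_{3}\oplus\M^\ast(2,\Bbbk)^{3}\oplus E$ via Corollary \ref{cor: H0 not kCp + p(p-1)sq} are correct; that part matches what the paper's lemmas deliver.
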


\begin{proof} Assume $H$ is nonsemisimple, nonpointed and
non-copointed. Then by \cite{GG} we need only
consider types $(3,3)$ and $(3,1)$. First consider type $(3,3)$.
 By Proposition \ref{prop:biti-dasca}, if $H$ is of type
$(3,3)$ then $H$ and $H^\ast$
 both contain a Taft Hopf algebra of dimension $9$.
By Corollary \ref{cor: (3,3) no 9 dim subcoalgebras},
$H$ has no subcoalgebra isomorphic to $\M^\ast(3,\Bbbk)$ and
by Proposition \ref{prop: no simple 4 dim},
$H$ has no subcoalgebra isomorphic to $\M^\ast(2, \Bbbk)$,
which implies that $H_{0} = \Bbbk G(H)$, a contradiction.

To complete the proof, we show that no Hopf algebra
of dimension $27$ can have only trivial grouplikes.
If $|G(H)| =1$, all possible coradicals are listed in the
table below. We show that each leads to a contradiction.

\begin{table}[here]
\begin{center}
\tiny{\begin{tabular} {|c|l|c|} \hline
{\bf Case} & {\bf $\Ho$} & {\bf $\dim \Ho$} \\
\hline $(i)$ & $\Bbbk\cdot 1\oplus \coM^{n}$, $1\leq n \leq 6$ & $1
+ 4n$\\ \hline $(ii)$ & $\Bbbk\cdot 1\oplus \coMtres^{n}$, $1\leq n
\leq 2$ & $1 + 9n$ \\ \hline $(iii)$ & $\Bbbk\cdot 1\oplus \coMcua$
& $1 + 16$ \\ \hline $(iv)$ & $\Bbbk\cdot 1\oplus \coMcin$ & $1 +
25$ \\ \hline $(v)$ & $\Bbbk\cdot 1\oplus \coM^{n}\oplus
\coMtres^{m}$, $0< n, m,\ 0< 4n + 9m < 26$ & $1 + 4n + 9m$ \\ \hline
$(vi)$ & $\Bbbk\cdot 1\oplus \coM^{n}\oplus \coMtres^{m}\oplus
\coMcua$, $0\leq n, m,\ 0< 4n + 9m +16 < 26$ & $1 + 4n + 9m + 16$ \\
\hline
\end{tabular}}
\end{center}
\ \caption{ Coradicals  $\dim H = 27$, $|\GH| =
1$}\label{tab-coradicals}
\end{table}

\par Case $(i)$ is not possible by
Example \ref{example: dim 27 no simples dim 4}.
 Case $(ii)$ is impossible by Example \ref{ex: dim}.
Cases $(iii)$, $(iv)$   are impossible by
Proposition \ref{prop:biti-dasca}$(ii)$.

\par  Only cases $(v)$, $(vi)$ remain.
By Proposition \ref{prop: dim simples dim 4}, $H$ cannot have
a simple $4$-dimensional subcoalgebra stable
under the antipode so the case $n=1$ is impossible in either $(v)$ or $(vi)$.
The remainder of case $(vi)$ is impossible by
Proposition \ref{prop:biti-dasca}.
\par The last remaining case is coradical
$H_0 \cong \Bbbk \cdot 1 \oplus
\M^\ast(2, \Bbbk)^2 \oplus \M^\ast(3,\Bbbk)$ since
one sees immediately that if $n>2$ or $m>1$
then the dimension of $H_0$ is impossibly big
and Proposition \ref{prop:biti-dasca}
will give a contradiction. So suppose that
$H_0 \cong \Bbbk \cdot 1 \oplus  D \oplus E \oplus \M^\ast(3,\Bbbk)$
where $D \cong \M^\ast(2,\Bbbk)$, $E = S(D)$ and $S^2(D) = D$; in particular
$H_0$ has dimension $18$.
If $P^{- ,\M^\ast(3,\Bbbk)} \neq 0$ then the usual
dimension arguments give a contradiction.
So suppose that these spaces are $0$. By Proposition \ref{prop:biti-dasca}$(i)$,
$P^{1,1}$ is nonzero and this implies by Lemma \ref{lema:fukuda}$(i)$ that
$P^{1,D},P^{D,1}$ are nonzero.
But then $P^{1,E},P^{E,1}$ are nonzero also
and so are $P^{D,X},P^{E,Y}$ where $X,Y \in \{ D,E \}$.
Then the dimension of $H$ is at least $18 +1 +  8 + 8 = 35$, a contradiction.
\end{proof}

We end this note by giving the complete
list of Hopf algebras of dimension $27$.

\begin{obs}\label{rmk:class27}
By Theorem \ref{thm dim 27}, if
$ H $ is a Hopf algebra of dimension $ 27 $
then $ H $ is either semisimple or pointed or
the dual is pointed.
By the classification of semisimple and pointed Hopf algebras of
dimension $ p^{3} $, $ H $
 is isomorphic to exactly one  Hopf algebra
in the following list:

Semisimple Hopf algebras of dimension $27$ were classified by Masuoka
\cite{ma-pp};
there
are $11$ isomorphism types, namely
\begin{enumerate}
\item[$(a)$] Three group algebras of abelian groups.
\item[$(b)$] Two group algebras of nonabelian groups, and their duals.
\item[$(c)$] $4$ self-dual Hopf algebras which are neither commutative nor
cocommutative. They are extensions of
$\Bbbk[C_{3}\times C_{3}]$ by $\Bbbk C_{3}$.
\end{enumerate}

Pointed Hopf algebras  of dimension $27$ were classified
independently by different authors, see
\cite{AS2}, \cite{CD} and \cite{Stv}.
Here $q $ is a primitive $3$-rd root of unity
and $ T_{q} $ the Taft Hopf algebra of dimension $ 9 $.
 Note that in $(d)$
 the grouplikes are isomorphic to
$C_3 \times C_3$, in $(e),(f),(g)$ to $C_9$, in $(h),(i)$ to $C_3$ and $(j),(k)$
are copointed but not pointed.
\begin{enumerate}
\item[$(d)$] The tensor-product Hopf algebra $T_{q} \otimes \Bbbk C_{3}$.
\item[$(e)$] $\widetilde{T_{q}}  := \Bbbk\langle g,\ x |\ gxg^{-1} =
q^{1/3}x,\ g^{9} = 1,\ x^{3} = 0\rangle$ ($q^{1/3}$ a $3$-th
root of $q$), with comultiplication
$\Delta (x) = x\otimes g^{3} + 1\otimes x,\ \Delta (g) = g\otimes g$.
\item[$(f)$] $\widehat{T_{q}} := \Bbbk\langle g,\ x |\ gxg^{-1} =
qx,\ g^{9} = 1,\ x^{3} = 0\rangle$, with comultiplication
$\Delta (x) = x\otimes g + 1\otimes x,\ \Delta (g) = g\otimes g$.
\item[$(g)$] ${\bf r}(q) := \Bbbk\langle g,\ x |\ gxg^{-1} = qx,\
g^{9} = 1,\ x^{3} = 1 - g^{3}\rangle$, with comultiplication
$\Delta (x) = x\otimes g + 1\otimes x,\ \Delta (g) = g\otimes g$.
\smallbreak
\item[$(h)$] The Frobenius-Lusztig kernel ${\bf u}_{q}({\mathfrak {sl}}_{2})
:= \Bbbk\langle g,\ x,\ y |\ gxg^{-1} = q^{2}x,$
$gyg^{-1} = q^{-2}y$, $g^{3} = 1$, $x^{3} = 0$,
$y^{3} = 0$, $xy - yx = g - g^{-1}\rangle$, with comultiplication
$\Delta (x) = x\otimes g + 1\otimes x,\ \Delta (y) =
y\otimes 1 + g^{-1}\otimes y,\
\Delta (g) = g\otimes g$.
\item[$(i)$] The book Hopf algebra ${\bf h}
(q, m) := \Bbbk\langle g,\ x,\ y |\ gxg^{-1} = qx,\ gyg^{-1} = q^{m}y,\
g^{3} = 1,\ x^{3} = 0,\ y^{3} = 0,\ xy - yx = 0\rangle,\ m \in C_{3}
\smallsetminus \{0\}$,
with comultiplication
$\Delta (x) = x\otimes g +
1\otimes x,\ \Delta (y) = y\otimes 1 + g^{m}\otimes y,\
\Delta (g) = g\otimes g$.
\item[$(j)$] The dual of the Frobenius-Lusztig kernel,
${\bf u}_{q}({\mathfrak {sl}}_{2})^{*}$.
\item[$(k)$] The dual of the case $(g)$, ${\bf r}(q)^{*}$.
\end{enumerate}
\end{obs}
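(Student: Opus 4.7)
The remark is essentially a bookkeeping consequence of Theorem \ref{thm dim 27} together with the already-known classifications of semisimple and pointed Hopf algebras of dimension $p^3$. The plan is to assemble the list by dispatching each of the three (overlapping) cases produced by Theorem \ref{thm dim 27}, specializing the existing classification theorems to $p=3$, and then removing redundancies between the pointed list and the copointed list by determining which pointed examples have pointed duals.

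First I would invoke Theorem \ref{thm dim 27} to reduce to the situation where $H$ is semisimple, or pointed nonsemisimple, or copointed nonsemisimple (and the latter two can overlap when both $H$ and $H^{\ast}$ are pointed). For the semisimple case I would cite Masuoka's classification \cite{ma-pp} of semisimple Hopf algebras of dimension $p^{3}$, which gives $p+8$ isomorphism classes: three group algebras of abelian groups, two group algebras of nonabelian groups together with their (nonisomorphic) duals, and $p+1$ self-dual nontrivial extensions of $\Bbbk[C_{p}\times C_{p}]$ by $\Bbbk C_{p}$. Setting $p=3$ yields the eleven classes $(a)$--$(c)$.

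Next I would invoke the classification of pointed nonsemisimple Hopf algebras of dimension $p^{3}$ due to Andruskiewitsch--Schneider \cite{AS2}, Caenepeel--D\u{a}sc\u{a}lescu \cite{CD}, and \c{S}tefan--van Oystaeyen \cite{Stv}, giving the $(p-1)(p+9)/2$ isomorphism classes. Specializing to $p=3$ and identifying these with the concrete generators-and-relations presentations produces the examples $(d)$--$(i)$; here I would organize the list by the order of the group of grouplikes ($C_{3}\times C_{3}$ in $(d)$, $C_{9}$ in $(e)$--$(g)$, and $C_{3}$ in $(h)$--$(i)$), which matches the structural data in the general pointed classification. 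Then, to produce the copointed-not-pointed items $(j),(k)$, I would use the aforementioned classification's statement that exactly two of the pointed classes have nonpointed duals, and check directly which ones: the Frobenius--Lusztig kernel ${\bf u}_{q}(\mathfrak{sl}_{2})$ and the Hopf algebra ${\bf r}(q)$ have nonpointed duals (for instance because their dimensions of spaces of skew-primitives or the structure of their coradical filtrations can be read off directly, or by citing the explicit statement in the cited works), whereas the rest are self-dual up to the classification, so taking duals only yields $(j)$ and $(k)$.

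Finally I would verify that the resulting list contains no repetitions across the three cases: items in $(a)$--$(c)$ are cosemisimple, hence disjoint from $(d)$--$(k)$; items in $(d)$--$(i)$ are pointed, hence disjoint from $(j),(k)$ which by construction are not pointed. The main (and essentially only) obstacle is the careful identification of each abstract isomorphism class produced by the general $p^{3}$ classifications with the explicit presentation listed in $(a)$--$(k)$, and the verification that $(j),(k)$ are really the only copointed-not-pointed examples; both verifications amount to unwinding the structural statements in \cite{ma-pp,AS2,CD,Stv} at $p=3$ and checking duality, and involve no new argument beyond Theorem \ref{thm dim 27}.
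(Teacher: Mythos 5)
Your proposal matches the paper exactly: the remark is proved there in precisely this way, namely by combining Theorem \ref{thm dim 27} with Masuoka's semisimple classification \cite{ma-pp} and the pointed classification \cite{AS2,CD,Stv}, specializing to $p=3$, and noting that exactly the two classes ${\bf u}_{q}(\mathfrak{sl}_{2})$ and ${\bf r}(q)$ have nonpointed duals, which yields $(j)$ and $(k)$. Your extra care about disjointness of the cases (cosemisimple versus pointed versus copointed-not-pointed) is implicit in the paper but adds nothing beyond its argument.
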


\section*{Acknowledgements}
This work was begun when G.A.G was visiting Mount
Allison University. He wants to thank all the
members of the Math/CS department
for their warm hospitality. The authors
also thank C. Vay for providing the proof of
Proposition \ref{prop:gen-stefan-vay}.

%

\end{document}